\newtheorem{thm}{Theorem}[section]
\newtheorem{Con}[thm]{Conjecture}
\newtheorem{cor}[thm]{Corollary}
\newtheorem{lem}[thm]{Lemma}
\newtheorem{pro}[thm]{Proposition}
\theoremstyle{definition}
\numberwithin{equation}{section}
\newcommand{\re}{\textup{Re}}
\newcommand{\h}{\mathcal{H}_{2g+1}}
\newcommand{\he}{\mathcal{H}_{2g+2}}
\newcommand{\hn}{\mathcal{H}_n}
\newcommand{\F}{\mathbb{F}_q[x]}
\newcommand{\legendre}[2]{\left(\frac{#1}{#2}\right)}
\newcommand{\s}{\sigma}
\begin{document}

\baselineskip=17pt

\title[values of $L$-functions in the hyperelliptic ensemble]{Complex Moments and the distribution of Values of $L(1,\chi_D)$ over Function Fields with Applications to Class Numbers}


\author{Allysa Lumley}

\address{Department of Mathematics and Statistics,
York University,
4700 Keele Street,
Toronto, ON,
M3J1P3
Canada}

\email{alumley@yorku.ca}

\date{}

\begin{abstract}  
In this paper we investigate the moments and the distribution of $L(1,\chi_D)$, where $\chi_D$ varies over quadratic characters associated to square-free polynomials $D$ of degree $n$ over $\mathbb{F}_q$, as $n\to\infty$. Our first result gives asymptotic formulas for the complex moments of $L(1,\chi_D)$ in a large uniform range. Previously, only the first moment has been computed due to work of Andrade and Jung. Using our asymptotic formulas together with the saddle-point method, we show that the distribution function of $L(1,\chi_D)$ is very close to that of a corresponding probabilistic model. In particular, we uncover an interesting feature in the distribution of large (and small) values of $L(1, \chi_D)$, that is not present in the number field setting. We also obtain $\Omega$-results for the extreme values of $L(1,\chi_D)$, which we conjecture to be best possible. 
Specializing $n=2g+1$ and making use of one case of Artin's class number formula, we obtain similar results for the class number $h_D$ associated to $\mathbb{F}_q(T)[\sqrt{D}]$. Similarly, specializing to $n=2g+2$ we can appeal to the second case of Artin's class number formula and deduce analogous results for $h_DR_D$ where $R_D$ is the regulator of $\mathbb{F}_q(T)[\sqrt{D}]$.


\end{abstract}



\maketitle

\section{Introduction}
An interesting and important problem in number theory is to understand the size of the class group, known as the class number, for a given field. The case of quadratic extensions of $\mathbb{Q}$ has a rich history of investigation which extends back to Gau\ss. Let $d$ be a fundamental discriminant and $h_d$ represent the class number of the field $\mathbb{Q}(\sqrt{d})$. Describing the extreme values of $h_d$ and the distribution of these values has been widely investigated. The main line of attack in this problem is to study the moments of $L(1,\chi_d)$, with $\chi_d$ taken as the Kronecker symbol $\left(\frac{d}{\cdot}\right)$. This approach works because of Dirichlet's class number formula.
Some recent notable papers discussing this problem are those of Granville and Soundararajan \cite{GranSound} and Dahl and Lamzouri \cite{AY}. The approach in these articles is to compare the complex moments of $L(1,\chi_d)$ to that of a random model and use the class number formula to apply this information to $h_d$.  

Here we discuss the adaptation of these techniques to study the class number, denoted as $h_D$, over function fields, $\mathbb{F}_q(T)$ with $q\equiv 1 (\bmod\, 4)$ and $D$ a monic square free polynomial in $\mathbb{F}_q[T]$. 
In this context, $h_D=|\text{Pic}(\mathcal{O}_D)|$, where $\text{Pic}(\mathcal{O}_D)$ is the Picard group of the ring of integers $\mathcal{O}_D\subseteq \mathbb{F}_q(T)(\sqrt{D(T)})$. Since $D$ is a square free polynomial we have that $\text{Pic}(\mathcal{O}_D)=\mathcal{C}l(\mathcal{O}_D)$, the class group of $\mathcal{O}_D$, which provides the justification for the name `class number'. 

  In 1992, Hoffstein and Rosen \cite{HoffRosen} investigated this question and obtained an average result by fixing the degree of the polynomial. The result is stated as: let $M$ be odd and positive then 
\begin{equation}\label{RoHoffbnd}
\frac1{q^M}\sum_{\substack{D \text{ monic}\\ \deg(D)=M}}h_D=\frac{\zeta_{\mathbb{F}_q[T]}(2)}{\zeta_{\mathbb{F}_q[T]}(3)}q^{(M-1)/2}-q^{-1},\end{equation}
where 
$$\zeta_{\mathbb{F}_q[T]}(s)=\sum_{f \text{ monic}}\frac1{|f|^s} \text{ for } \Re(s)>1,$$
 is the Riemann zeta function over $\mathbb{F}_q[T]$. Here the norm of $f\in\mathbb{F}_q[T]\setminus\{0\}$ is $|f|=q^{\deg(f)}$.  This result is directly comparable to Gau\ss's conjecture (proven by Siegel \cite{Siegel}) for class numbers of imaginary quadratic number fields.
  Finally, letting $q\to\infty$ one obtains an asymptotic formula which can be compared to the 2012 work of Andrade \cite{Andrade} described below. 
 
There are two limits that can be considered when studying problems over function fields. The first fixes the degree of the polynomial and lets the number of elements in the base field go to infinity as was done by Hoffstein and Rosen. The second fixes the number of elements in the base field and allows the degree of the polynomials to go to infinity. The result of Andrade \cite{Andrade} considers the second perspective. His article describes the mean value of $h_D$ by averaging over $\h$ the set  of monic, square free polynomials with degree $2g+1$. Proving that 
\begin{equation}\label{AndhD}
\frac1{|\h|}\sum_{D\in\h}h_D\sim \zeta_{\mathbb{F}_q[T]}(2)\prod_{P \text{ irreducible}}\left(1-\frac1{(|P|+1)|P|^2}\right)q^{g} \text{ as } g\to\infty.\end{equation}
 We remark that \eqref{RoHoffbnd} and \eqref{AndhD} have the same order of magnitude in the main term as can be seen by taking $M=2g+1$. 
 
  Now, for any monic $D\in \mathbb{F}_q[T]$ we have Dirichlet characters modulo $D$ on $\mathbb{F}_q[T]$, defined in Section \ref{Prelim}. The natural follow up to this is to define a Dirichlet $L$-function associated to such a character: 
\[L(s,\chi)=\sum_{f \text{ monic}}\frac{\chi(f)}{|f|^s}, \text{ for } s\in\mathbb{C}.\]
  Artin \cite{Artin} proved a class number formula valid over function fields which links $h_D$ to $L(1,\chi_D)$ where $\chi_D(\cdot)$ is the Kronecker symbol $\left(\frac{D}{\cdot}\right)$: 
 \begin{equation}\label{Artinform}L(1,\chi_D)=\frac{\sqrt{q}}{\sqrt{|D|}}h_D=q^{-g}h_D, \text{ for } D\in\h.\end{equation}
  To prove \eqref{AndhD} Andrade makes use of an approximate functional equation for $L(1,\chi_D)$ to show
\begin{equation}\label{AndL1chiD}
\frac1{|\h|}\sum_{D\in\h}L(1,\chi_D)\sim \zeta_{\mathbb{F}_q[T]}(2)\prod_{P \text{ irreducible}}\left(1-\frac1{(|P|+1)|P|^2}\right) \text{ as } g\to\infty,\end{equation}
  and then applies \eqref{Artinform}.
The main drawback to using the approximate functional equation is that is difficult to use it to calculate large moments of $L(1,\chi_D)$. 


 In this article, we shall investigate the distribution of $L(1,\chi_D)$ for $D\in \hn$ as $n\to \infty$, where 
 \begin{equation}\label{defHn}
 \hn=\{D\in\mathbb{F}_q[T]: D \text{ is monic, square free, } \deg(D)=n\}.
 \end{equation}
  To do this we will need to compute large complex moments of the associated $L(1,\chi_D)$. We approach the computation of such moments via a random model, a technique that has been used successfully in the study of quadratic number fields. 
 
For the remainder of the article the following notation will be fixed. Let $\mathbb{A}=\mathbb{F}_q[T]$ taking $q\equiv 1(\bmod\, 4)$ for simplicity. Here $\log$ denotes base $q$ logarithm, $\ln$ is the natural logarithm and $\log_{j}$ ( respectively $\ln_j$) represent the $j$-fold iterated logarithm. 
Finally, let $P$ represent an irreducible (prime) polynomial. We define the generalized divisor function $d_z(f)$ on its prime powers as 
\begin{equation}\label{gendivfundef}
d_z(P^a)=\frac{\Gamma(z+a)}{\Gamma(z)a!},
\end{equation}
and extend it to all monic polynomials multiplicatively. Then, we can express the complex moments of $L(1,\chi_D)$ as follows. 

\begin{thm}\label{momLthm}
Let $n$ a positive integer, and $z\in \mathbb{C}$ be such that $|z|\le \frac{n}{260\log(n)\ln\log(n)}$.
Then 
\begin{equation*}
\frac1{|\hn|}\sum_{D\in\hn}L(1,\chi_D)^z=\sum_{\substack{f\text{ monic}}}\frac{d_z(f^2)}{|f|^2}\prod_{P|f}\left(1+\frac1{|P|}\right)^{-1}\left(1+O\left(\frac1{n^{11}}\right)\right).
\end{equation*}
\end{thm}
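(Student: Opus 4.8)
The plan is to express $L(1,\chi_D)^z$ via a Dirichlet series, truncate it, and average over $D\in\hn$ using the orthogonality/character-sum machinery for the family of quadratic characters associated to squarefree $D$ of degree $n$. First I would write, using the Euler product for $L(s,\chi_D)$ valid for $\re(s)>1$ together with analytic continuation, the expansion $L(1,\chi_D)^z=\sum_{f\text{ monic}}d_z(f)\chi_D(f)/|f|$, at least formally; since $L(1,\chi_D)$ is a polynomial in $q^{-1}$ of bounded degree (it has no pole and the character sum is finite), one must justify convergence — the standard device is to insert a smooth or sharp cutoff $\deg(f)\le y$ with $y$ a suitable multiple of $|z|$, and control the tail. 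The tail bound is where the hypothesis $|z|\le n/(260\log n\ln\log n)$ enters: one needs the truncated series to approximate $L(1,\chi_D)^z$ with error $O(n^{-11})$ uniformly, which requires bounds of the shape $|L(1,\chi_D)|\le (\ln n)^{O(1)}$ for \emph{almost all} $D$, and a crude a priori bound $|L(1,\chi_D)|\le q^{O(n)}$ for the exceptional set, so that the $z$-th power of the exceptional contribution is still negligible against $|\hn|$. This truncation step is the main obstacle: getting the numerology right so that $y$ is large enough for the tail to be small yet small enough that the main-term sum over $f$ can be evaluated.

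With the truncation in place, I would swap the order of summation and evaluate $\frac1{|\hn|}\sum_{D\in\hn}\chi_D(f)$. The key input is the evaluation of such quadratic character sums over squarefree moduli: for $f$ a square (say $f=\ell^2$), $\chi_D(f)=1$ whenever $(D,f)=1$, so the average is $\prod_{P\mid f}(1+1/|P|)^{-1}+$ (small error from sieving out $D$ sharing a factor with $f$ and from the count of squarefree polynomials), while for $f$ \emph{not} a perfect square the average exhibits square-root cancellation, contributing an error term. This is the function-field analogue of the Jacobi-symbol averaging used by Granville–Soundararajan and Dahl–Lamzouri; over $\F$ it can be done cleanly via the Pólya–Vinogradov / Weil-type bound for character sums, or via a direct generating-function computation with the zeta function $\zeta_{\F}(s)$. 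Isolating the square terms $f=\ell^2$ gives the main term $\sum_{\ell\text{ monic}}d_z(\ell^2)/|\ell|^2\prod_{P\mid\ell}(1+1/|P|)^{-1}$, and I would then need to check that extending this sum from $\deg(\ell)\le y/2$ back to all monic $\ell$ costs only $O(n^{-11})$ — again using $d_z(\ell^2)\ll |\ell|^{\varepsilon}d_{|z|}(\ell^2)$-type divisor bounds and the constraint on $|z|$.

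The remaining work is bookkeeping: collecting the non-square contribution (controlled by square-root cancellation in the character sum, summed against $\sum_{\deg f\le y}|d_z(f)|/|f|\ll \exp(O(y|z|/\log\ldots))$-type bounds, which the choice of $y$ keeps under $q^{n/2}\ll |\hn|^{1/2}$), the error from the non-squarefree sieve, and the error from $|\hn|=\frac{q^n}{\zeta_{\F}(2)}+O(q^{n/2})$ being only approximately $q^n$. Each of these is made $O(n^{-11})$ by the same single inequality on $|z|$; the constant $260$ is presumably exactly what makes all these competing estimates simultaneously work. I expect no conceptual difficulty in these steps beyond careful tracking of which error dominates; the delicate point remains the initial truncation of $L(1,\chi_D)^z$, since unlike the first moment (handled by Andrade via the approximate functional equation) here one cannot afford a lossy bound — the $z$-th power amplifies any slack, so the almost-all bound on $|L(1,\chi_D)|$ and the measure of its exceptional set must be quantitatively sharp.
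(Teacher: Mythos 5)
Your overall architecture is the same as the paper's: expand $L(1,\chi_D)^z$ into a Dirichlet series with coefficients $d_z(f)$, truncate, swap the order of summation, evaluate $\sum_{D\in\hn}\chi_D(f)$ by orthogonality (squares $f=\ell^2$ give the main term $\prod_{P\mid \ell}(1+1/|P|)^{-1}$, non-squares give square-root cancellation, exactly as in Lemma \ref{orthogonality}), and then re-extend the $\ell$-sum at the cost of $O(n^{-11})$ via Rankin-type bounds. All of that matches Sections 3.1--3.2 of the paper.

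The genuine gap is in the step you yourself flag as the main obstacle: the truncation of $L(1,\chi_D)^z$. Your plan — an $(\ln n)^{O(1)}$ bound for \emph{almost all} $D$ plus a crude bound $|L(1,\chi_D)|\le q^{O(n)}$ on an exceptional set — cannot deliver the theorem in the stated range of $z$. Raising the crude bound to the power $z$ gives $q^{O(n|z|)}=q^{O(n^2/(\log n\,\ln\log n))}$, which overwhelms $|\hn|=q^{n-1}(q-1)$ no matter how small the exceptional set is (short of it being empty); and for $\Re(z)<0$ an upper bound on $|L(1,\chi_D)|$ is useless — you would need a uniform lower bound, i.e.\ a zero-free region near $s=1$ for \emph{every} $\chi_D$. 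The paper's point is that in the function-field setting no exceptional set is needed at all: Weil's Riemann Hypothesis gives the prime-sum bound \eqref{CharSumGRH} for every character, whence Lemma \ref{Truncation} shows that for every $D$ one has $\ln L(1,\chi_D)=-\sum_{\deg P\le M}\ln\bigl(1-\chi_D(P)/|P|\bigr)+O\bigl(q^{-M/2}\deg D/M\bigr)$ with $M=A\log\log|D|$. Multiplying by $z$ and exponentiating (Lemma \ref{keylemma}) expresses $L(1,\chi_D)^z$, uniformly in $D$, as a Dirichlet series over $M$-smooth $f$ (not a plain cutoff $\deg f\le y$), further truncated to $|f|\le|D|^{1/3}$ by Rankin's trick (Lemma \ref{truncationpart2}), with relative error $O((\log|D|)^{-B})$, $B=A/2-2$; the choice $A=26$ gives $B=11$ and explains the constant $260=10A$ in the hypothesis on $|z|$. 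The same RH input also yields the unconditional two-sided bounds of Proposition \ref{GRHBounds}, which is what makes complex (including negative real part) $z$ harmless. Without replacing your almost-all/exceptional-set device by this uniform pointwise approximation, the proof does not close.
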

The strategy for proving this, and a following result about the distribution of values, is to compare the distribution of $L(1,\chi_D)$ to that of a probabilistic random model: Let $\{\mathbb{X}(P)\}$ denote a sequence of  independent random variables indexed by the irreducible (prime) elements $P\in \mathbb{A}$,  
and taking the values $0,\pm1$ as follows
\begin{equation}\label{defrandvar}
\mathbb{X}(P)=\begin{cases}
0 & \text{ with probability }\frac1{|P|+1}\\
\pm1 & \text{ with probability }\frac{|P|}{2(|P|+1)}.
\end{cases}
\end{equation}
Let $f=P_1^{e_1}P_2^{e_2}\cdots P_s^{e_s}$ be the prime power factorization of $f$, then we extend the definition of $\mathbb{X}$  multiplicatively as follows
\begin{equation}\label{extendeddefX}\mathbb{X}(f)=\mathbb{X}(P_1)^{e_1}\mathbb{X}(P_2)^{e_2}\cdots \mathbb{X}(P_s)^{e_s}.\end{equation}
In this article we compare the distribution of $L(1,\chi_D)$ with  
\begin{equation}\label{randprod}L(1,\mathbb{X}):=\sum_{f \text{ monic}}\frac{\mathbb{X}(f)}{|f|}=\prod_{P \text{ irreducible}}\left(1-\frac{\mathbb{X}(P)}{|P|}\right)^{-1},\end{equation} 
which converges almost surely.  Further properties of this model will be discussed in Section \ref{PropRandProd}. 

For $\tau>0$, define 
\[\Phi_{\mathbb{X}}:=\mathbb{P}(L(1,\mathbb{X})>e^{\gamma}\tau) \text{ and } \Psi_{\mathbb{X}}(\tau):=\mathbb{P}\left(L(1,\mathbb{X})<\frac{\zeta_{\mathbb{A}}(2)}{e^{\gamma}\tau}\right).\]
We prove that the distribution of $L(1,\chi_D)$ is well approximated by the distribution of $L(1,\mathbb{X})$ uniformly in a large range.
\begin{thm}\label{distfuncchi}
Let $n$ be large.  Uniformly in $1\le \tau \le \log n-2\log_{2}n-\log_{3}n$ we have 
\[\frac1{|\hn|}|\{D\in\hn : L(1,\chi_D)>e^{\gamma}\tau\}|=\Phi_{\mathbb{X}}(\tau)\left(1+O\left(\frac{e^{\tau}(\log n)^2\log_{2}n}{ n}\right)\right),\]
and 
\[\frac1{|\hn|}|\{D\in\hn : L(1,\chi_D)<\frac{\zeta_{\mathbb{A}}(2)}{e^{\gamma}\tau}\}|=\Psi_{\mathbb{X}}(\tau)\left(1+O\left(\frac{e^{\tau}(\log n)^2\log_{2}n}{ n}\right)\right).\]

\end{thm}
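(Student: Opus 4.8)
The plan is to deduce Theorem~\ref{distfuncchi} from the complex-moment formula of Theorem~\ref{momLthm} via a saddle-point (Laplace inversion) argument, following the Granville--Soundararajan and Dahl--Lamzouri template. The first step is to understand the random model $L(1,\X)$ on its own: one shows that the distribution functions $\Phi_\X(\tau)$ and $\Psi_\X(\tau)$ decay doubly exponentially, with the precise asymptotics governed by the Laplace transform of $\log L(1,\X)$. Concretely, one analyzes $\ex\big[L(1,\X)^s\big]=\prod_P \ex\big[(1-\X(P)/|P|)^{-s}\big]$ for real $s\to\infty$: using the explicit law \eqref{defrandvar} one gets $\ex[L(1,\X)^s]=\prod_P\big(1+\tfrac1{|P|}\big)^{-1}\big(\tfrac{|P|}{|P|+1}\cdot\tfrac12[(1-1/|P|)^{-s}+(1+1/|P|)^{-s}]+\tfrac1{|P|+1}\big)$, which one shows equals $\exp\big(s\log_2 s+ c\, s + o(s)\big)$ for an explicit constant, with the dominant contribution coming from primes of size up to $\approx s/\log s$. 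A standard saddle-point inversion then yields $\Phi_\X(\tau)=\exp\big(-\tfrac{e^\gamma\tau e^{C_1}}{??}\big)$-type asymptotics; the key output one needs is a two-sided bound and, crucially, a \emph{local} smoothness estimate: $\Phi_\X(\tau(1+O(\delta)))=\Phi_\X(\tau)(1+O(\delta\cdot e^{\tau}))$ or similar, so that small perturbations of $\tau$ cost only the factor that appears in the error term of the theorem. This is essentially the function-field analogue of the work already cited, and I would expect to import or lightly adapt those lemmas.

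The second step transfers moments to the model. For a real parameter $s$ in the admissible range $|s|\le n/(260\log n\,\ln\log n)$, Theorem~\ref{momLthm} says $\frac1{|\hn|}\sum_{D\in\hn}L(1,\chi_D)^s$ equals $\sum_{f}\frac{d_s(f^2)}{|f|^2}\prod_{P|f}(1+1/|P|)^{-1}$ up to a factor $1+O(n^{-11})$. One checks that this Dirichlet-series main term is \emph{exactly} $\ex\big[L(1,\X)^s\big]$: expanding $L(1,\X)^s=\prod_P(1-\X(P)/|P|)^{-s}$ and taking expectations term by term, the independence of the $\X(P)$ and the fact that $\ex[\X(P)^{2k}]=\frac{|P|}{|P|+1}$, $\ex[\X(P)^{2k+1}]=0$ collapse the sum to precisely $\sum_f d_s(f^2)|f|^{-2}\prod_{P|f}(1+1/|P|)^{-1}$ (only even powers survive, and $d_s(P^{2a})$ is the coefficient of the $f^2$ term). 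Hence
\begin{equation*}
\frac1{|\hn|}\sum_{D\in\hn}L(1,\chi_D)^s=\ex\big[L(1,\X)^s\big]\Big(1+O\big(n^{-11}\big)\Big)
\end{equation*}
uniformly for $|s|\le n/(260\log n\,\ln\log n)$.

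The third step is the saddle-point extraction itself. To estimate $\#\{D\in\hn: L(1,\chi_D)>e^\gamma\tau\}$, write it via the inverse-Laplace / Perron formula against the moment generating function: for a well-chosen real $s=s(\tau)$ (the saddle point, of size roughly $\asymp e^\tau$, comfortably inside the admissible range once $\tau\le\log n-2\log_2 n-\log_3 n$), one has
\begin{equation*}
\frac1{|\hn|}\#\{D: L(1,\chi_D)>e^\gamma\tau\}=\frac1{2\pi i}\int_{(s)}\Big(\frac1{|\hn|}\sum_{D}L(1,\chi_D)^w\Big)\frac{(e^\gamma\tau)^{-w}}{w}\,dw +(\text{truncation error}).
\end{equation*}
Replace the inner sum by $\ex[L(1,\X)^w](1+O(n^{-11}))$ on the part of the contour where $|w|$ stays in range, bound the tail using crude moment bounds, and recognize the resulting integral as the same saddle-point integral that computes $\Phi_\X(\tau)$ — shifted by the multiplicative error $1+O(n^{-11})$ from the moment formula plus the error incurred by truncating the contour and by the mismatch between the continuous variable and the discrete sum. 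Collecting these gives the stated relative error $1+O\big(e^\tau(\log n)^2\log_2 n/n\big)$; the $(\log n)^2\log_2 n$ factor should emerge from the width of the admissible $s$-range relative to $n$ and the truncation length, and the $e^\tau$ from the size of the saddle point $s(\tau)$. The small-values statement for $\Psi_\X$ is entirely parallel, using negative $s$ (or equivalently $L(1,\chi_D)^{-s}$) and the functional-equation-free lower bound $L(1,\chi_D)\gg 1/\log n$ type estimates to control the relevant tail.

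The main obstacle I anticipate is the quantitative saddle-point analysis tying the admissible moment range to the uniform $\tau$-range: one must verify that the saddle point $s(\tau)$ stays strictly below $n/(260\log n\,\ln\log n)$ precisely when $\tau\le\log n-2\log_2 n-\log_3 n$, and simultaneously that the contributions from $|w|$ beyond the admissible range (where Theorem~\ref{momLthm} gives no information) are negligible against $\Phi_\X(\tau)$. This requires sharp two-sided control of $\ex[L(1,\X)^s]$ as $s\to\infty$ — essentially an exact determination of the constant in $\log\ex[L(1,\X)^s]= s\log_2 s + O(s)$ and a matching lower bound for $\Phi_\X(\tau)$ — so that the doubly-exponentially small main term genuinely dominates the truncation error. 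The secondary technical point is handling the ``new feature'' alluded to in the introduction: in the function-field setting $L(1,\chi_D)$ is bounded (it is a value of a polynomial $L$-function with finitely many zeros on the circle $|u|=q^{-1/2}$), so the distribution functions $\Phi_\X,\Psi_\X$ have genuine endpoints, and one must make sure the saddle-point method is only invoked in the range $\tau\le\log n-2\log_2 n-\log_3 n$ where $\Phi_\X(\tau)$ is still larger than the error term; pushing to the very edge of support is where the argument breaks and is (correctly) excluded from the statement.
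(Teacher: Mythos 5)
Your overall route is the paper's: transfer the complex moments to the random model (this is exactly Lemma \ref{relatechitoX}; your step two reproduces Lemma \ref{expf} and the identification of the main term with $\mathbb{E}(L(1,\mathbb{X})^s)$), invert along the vertical line through the saddle point $\kappa$ solving \eqref{kappadef} (note $\kappa\asymp q^{\tau}$, not $e^{\tau}$), truncate at the height where the moment transfer is valid, and use local smoothness of $\Phi_{\mathbb{X}}$ (i.e.\ \eqref{PhiXtaulambda}, imported from the analysis of the random model) to absorb the perturbation of $\tau$; your accounting of where the factors $e^{\tau}$ and $(\log n)^2\log_2 n$ come from is essentially right. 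However, the inversion step as you wrote it has a genuine gap. With the sharp Perron kernel $(e^{\gamma}\tau)^{-w}/w$ and only ``crude moment bounds'', the truncation cannot be closed: off the admissible range $|t|\le Y\asymp n/(\log n\,\log_2 n)$ the only available estimate is the trivial $|M(\kappa+it)|\le M(\kappa)$ (Theorem \ref{momLthm} gives no information there, and no decay in $t$ is known for the discrete average $M$), and against the kernel $1/|w|$ this does not even give an absolutely convergent tail. Moreover, the sharp-cutoff Perron error contains, for each $D$, a term of size roughly $(L(1,\chi_D)/e^{\gamma}\tau)^{\kappa}\big/\bigl(Y\,|\ln(L(1,\chi_D)/e^{\gamma}\tau)|\bigr)$, which blows up for discriminants with $L(1,\chi_D)$ near the threshold; bounding how many such $D$ there are is precisely the local regularity of the empirical distribution you are in the middle of proving, so this cannot be waved away.

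The paper resolves both difficulties with the two-sided smoothed Perron inequalities of Lemma \ref{PerronLemma}: one sandwiches the indicator using the kernel $\bigl((e^{\lambda s}-1)/(\lambda s)\bigr)^{N}$ with $N=[\log_2|D|]$ and $\lambda=e^{10}/Y$, so that beyond height $Y$ the kernel alone supplies a factor $\ll N^{-1}(3/(\lambda Y))^{N}$, making the tails of both $J(\tau)$ and $J_M(\tau)$ negligible using only the trivial bound, while for $|t|\le Y$ Lemma \ref{relatechitoX} (which you need in its complex form, not merely for real $s$) gives $J_M(\tau)-J(\tau)\ll \Phi_{\mathbb{X}}(\tau)(\log|D|)^{-8}$. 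The price of the smoothing is that the sandwich compares $\mathbb{P}(L(1,\chi_D)>e^{\gamma}\tau)$ with $\Phi_{\mathbb{X}}(e^{\pm\lambda N}\tau)$ rather than $\Phi_{\mathbb{X}}(\tau)$, and it is exactly \eqref{PhiXtaulambda} applied with $\lambda N\asymp (\log n)^2\log_2 n/n$ that yields the stated relative error $O\bigl(e^{\tau}(\log n)^2\log_2 n/n\bigr)$. So your plan is salvageable and strategically identical to the paper's, but you must replace the raw Perron integral plus unspecified ``truncation error'' by such a smoothed two-sided kernel (or an equivalent device); as written, that step would fail.
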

And below we describe the asymptotic behaviour of $\Phi_{\mathbb{X}}$ and $\Psi_{\mathbb{X}}$.
\begin{thm}\label{distfuncX}
For any large $\tau$ we have 
\begin{equation}\label{PhiXtau}
\Phi_{\mathbb{X}}(\tau)=\exp\left(-C_1(q^{\{\log\kappa(\tau)\}})\frac{q^{\tau-C_0(q^{\{\log\kappa(\tau)\}})}}{\tau}\left(1+O\left(\frac{\log\tau}{\tau}\right)\right)\right),
\end{equation}
where $\kappa(\tau)$ is defined by \eqref{kappadef}, $C_0(t)=G_2(t)$, $C_1(t)=G_2(t)-G_1(t)$ and $G_i(t)$ are defined in \eqref{defG1} and \eqref{defG2} respectively.   Furthermore we have $$-\frac1{\ln q}+\ln(\cosh(c))/c-\tanh(c)<-C_1(q^{\{\log\kappa(\tau)\}})<\ln(\cosh(q))/q-\tanh(q),$$ where $c=1.28377...$. 
The same results hold for $\psi_{\mathbb{X}}$. \\
Additionally, if we let $0<\lambda<e^{-\tau}$, then 
\begin{equation}\label{PhiXtaulambda}
\Phi_{\mathbb{X}}(e^{-\lambda}\tau)=\Phi_{\mathbb{X}}(\tau)(1+O(\lambda e^{\tau}))\text{ and } \Psi_{\mathbb{X}}(e^{-\lambda}\tau)=\Psi_{\mathbb{X}}(\tau)(1+O(\lambda e^{\tau})).
\end{equation}
\end{thm}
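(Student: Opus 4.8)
The plan is to analyze the random Euler product $L(1,\X)=\prod_P(1-\X(P)/|P|)^{-1}$ via its Laplace transform and a saddle-point argument, closely following the template of Granville--Soundararajan and Dahl--Lamzouri but tracking the function-field arithmetic carefully (this is where the new feature, the periodicity in $\{\log\kappa(\tau)\}$, enters). First I would study $M(s):=\ex\big[L(1,\X)^s\big]$ for real $s>0$. By independence of the $\X(P)$ and the distribution \eqref{defrandvar},
\[
\log M(s)=\sum_P\log\ex\Big[\big(1-\tfrac{\X(P)}{|P|}\big)^{-s}\Big]
=\sum_P\log\!\Big(\tfrac1{|P|+1}+\tfrac{|P|}{2(|P|+1)}\big[(1-\tfrac1{|P|})^{-s}+(1+\tfrac1{|P|})^{-s}\big]\Big),
\]
and the large-$s$ asymptotics of this sum are governed entirely by primes $P$ with $|P|\le s$ roughly, i.e. by $\deg P\le \log s$. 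Because $|P|=q^{\deg P}$ ranges over a \emph{discrete} set of values, the transition between the ``small prime'' regime (where each Euler factor contributes like its maximal value, $\tfrac{|P|}{2(|P|+1)}(1-1/|P|)^{-s}$) and the ``large prime'' regime (where the factor is $\approx 1+s^2/(2|P|^2)$) does not happen smoothly at a single scale but depends on how $\log s$ sits relative to the integer grid of degrees; this is precisely what produces the fractional-part quantity $q^{\{\log\kappa(\tau)\}}$ and the functions $G_1,G_2$ of \eqref{defG1}--\eqref{defG2}. I would extract $\log M(s)=G_2(\cdot)\,q^{s}/s\cdot(1+O(\log s/s))$ type asymptotics, being careful that the relevant parameter is a function of $\{\log s\}$.

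Next, to pass from moments to the tail $\Phi_{\X}(\tau)=\mathbb{P}(L(1,\X)>e^\gamma\tau)$, I would use the standard inequality/identity relating $\Phi_{\X}$ to an inverse Laplace transform of $M(s)$ and apply the saddle-point method: $\Phi_{\X}(\tau)\approx \inf_{s>0} e^{-s\log(e^\gamma\tau)}M(s)$, with the optimal $s$ solving $\log(e^\gamma\tau)=M'(s)/M(s)$, which gives $s=\tau+O(\log\tau)$ and hence $\kappa(\tau)$ essentially of size $\tau$. Substituting, the $e^{\gamma}$ and the constant $C_0(t)=G_2(t)$ appearing in the exponent $q^{\tau-C_0(\cdot)}/\tau$ come from the second-order terms in $\log M(s)$ and from the $\gamma$ in the normalization; the coefficient $C_1(t)=G_2(t)-G_1(t)$ is the leading constant. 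To make the saddle-point rigorous one needs an upper bound for $\Phi_{\X}$ (Markov/Chebyshev applied to $L(1,\X)^s$ at the saddle value) and a matching lower bound (restricting to the event that the first few $\X(P)$ take their extreme value $+1$, which forces $L(1,\X)$ large and contributes a probability of the right shape); I would do both. The claimed two-sided numerical bounds $-\tfrac1{\ln q}+\ln(\cosh c)/c-\tanh c<-C_1(q^{\{\log\kappa(\tau)\}})<\ln(\cosh q)/q-\tanh q$ then follow by optimizing the explicit expression for $G_2-G_1$ over the fractional part, with the critical point $c=1.28377\ldots$ being the solution of a transcendental equation ($\tanh$ versus a rational function) arising from $\frac{d}{dt}\big(\ln(\cosh t)/t-\tanh t\big)=0$.

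For the small-values statement about $\Psi_{\X}(\tau)=\mathbb{P}(L(1,\X)<\zeta_{\mathbb{A}}(2)/(e^\gamma\tau))$, I would exploit the near-symmetry $\X(P)\mapsto-\X(P)$, which has the same law, so that $\mathbb{P}\big(\prod(1-\X(P)/|P|)^{-1}\text{ small}\big)$ is controlled by $\ex\big[L(1,\X)^{-s}\big]$, and the factor $\zeta_{\mathbb{A}}(2)=\prod_P(1-1/|P|^2)^{-1}$ appears because $(1-1/|P|)^{-1}(1+1/|P|)^{-1}=(1-1/|P|^2)^{-1}$; repeating the saddle-point analysis gives the same main term and error, hence ``the same results hold for $\Psi_{\X}$.'' Finally, the stability estimate \eqref{PhiXtaulambda} for $0<\lambda<e^{-\tau}$ is a soft consequence of the shape of the answer: since $\Phi_{\X}(\tau)=\exp(-(C_1+o(1))q^{\tau-C_0}/\tau)$, replacing $\tau$ by $e^{-\lambda}\tau=\tau-\lambda\tau+O(\lambda^2\tau)$ changes the exponent multiplicatively by $1+O(\lambda\tau/\!\log\dots)$; but the more robust argument is to note directly from monotonicity of $\Phi_{\X}$ and the already-established asymptotic that $\Phi_{\X}(e^{-\lambda}\tau)/\Phi_{\X}(\tau)=1+O(\lambda e^\tau)$, and likewise for $\Psi_{\X}$ --- I would present it that way to avoid re-running the saddle point.

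\medskip
The main obstacle, I expect, is the bookkeeping around the discrete degree spectrum: unlike the number-field case where primes are (heuristically) equidistributed on a logarithmic scale, here $\log M(s)$ genuinely oscillates with $\{\log s\}$, so one must set up $G_1,G_2$ and $\kappa(\tau)$ so that the saddle-point output lands exactly on the stated closed form, and then verify the asymptotics are uniform in the oscillation. Keeping the error term $O(\log\tau/\tau)$ uniform through the saddle-point inversion --- in particular showing the contribution away from the saddle is genuinely negligible despite the oscillatory factor --- is the delicate part.
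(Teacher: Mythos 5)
Your high-level plan (Laplace transform of the random Euler product, saddle point, oscillation coming from the discreteness of $|P|=q^{\deg P}$, the $\X\mapsto-\X$ symmetry for $\Psi_{\X}$, and $c=1.28377\ldots$ as the critical point of $\ln\cosh(y)/y-\tanh(y)$) is the right family of ideas and matches the paper's framework, but two steps as you describe them would not deliver the theorem. First, the quantitative anchors are off: solving $\mathcal{L}'(\kappa)=\ln\tau+\gamma$ with $\mathcal{L}'(r)=\ln\log r+\gamma+G_2(q^{\{\log r\}})/\log r+O(\log\log r/(\log r)^2)$ (Proposition \ref{curlyLbound}) forces $\log\kappa=\tau-G_2+O(\log\tau/\tau)$, i.e. the saddle $\kappa\asymp q^{\tau}$ is \emph{exponentially} large in $\tau$, not $s=\tau+O(\log\tau)$; correspondingly $\ln\ex\big[L(1,\X)^{s}\big]$ is of size $s\ln\log s$, not of the shape $G_2\,q^{s}/s$ you propose to extract. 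More importantly, your mechanism for the matching lower bound (force $\X(P)=+1$ for all small $P$) cannot prove \eqref{PhiXtau}: forcing all $\deg P\le m$ with $m\approx\tau$ an integer costs probability roughly $\exp\left(-\ln 2\cdot\zeta_{\mathbb{A}}(2)\,q^{m}/m\right)$, which agrees with the true answer $\exp\left(-C_1q^{\tau-C_0}/\tau(1+o(1))\right)$ only up to a (wrong) bounded constant in the exponent, whereas the theorem asserts the exponent with relative error $O(\log\tau/\tau)$. To match the Chernoff upper bound at that precision one needs a genuine two-sided inversion; the paper does this in Theorem \ref{distnsig1X} via the smoothed Perron formula of Lemma \ref{PerronLemma} combined with the decay estimates of Lemmas \ref{ratioEp} and \ref{decayexpect} for $|\ex(L(1,\X)^{\kappa+it})|/\ex(L(1,\X)^{\kappa})$, and only then feeds in the expansions of $\mathcal{L}$ and $\mathcal{L}'$ to produce $G_1$, $G_2$ and the fractional-part dependence.

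Second, the stability estimate \eqref{PhiXtaulambda} is not a soft consequence of \eqref{PhiXtau} plus monotonicity: the asymptotic \eqref{PhiXtau} determines $\Phi_{\X}$ only up to a factor $\exp\left(O\left(q^{\tau}\log\tau/\tau^{2}\right)\right)$, which is vastly larger than the claimed $1+O(\lambda e^{\tau})$ when $\lambda<e^{-\tau}$, and monotonicity only yields the one-sided inequality $\Phi_{\X}(e^{-\lambda}\tau)\ge\Phi_{\X}(\tau)$. The paper instead proves this directly from the second Perron inequality \eqref{Perron2}, bounding the resulting integral by $\ll\lambda\sqrt{\kappa\ln\kappa}\,\ex(L(1,\X)^{\kappa})(e^{\gamma}\tau)^{-\kappa}\ll\lambda\kappa\,\Phi_{\X}(\tau)$ via \eqref{PhiXasymp}; some argument of comparable sharpness, run through the integral representation rather than the final asymptotic, is unavoidable here.
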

Our Theorem \ref{distfuncX} should be compared to those of \cite{GranSound} and \cite{AY}, both of which study the behaviour of $L(1,\chi_d)$ over quadratic number fields.  The asymptotic behaviour of $\Phi_{\mathbb{X}}(\tau)$ is strikingly similar in both of these papers. In \cite{GranSound} the authors are studying the distribution of $L(1,\chi_d)$ over all fundamental discriminants $d$, $|d|\le x$, comparing it to a corresponding probabilistic model $L(1,\mathbb{X})$. In \cite{AY} the authors are studying the distribution of $L(1,\chi_d)$ over fundamental discriminants of the form $d=4m^2+1$, $m\ge 1$ and $d$ is square free. The restriction in \cite{AY} is used in order to study the behaviour of class numbers associated to such $d$, again comparing to a corresponding probabilistic model. In both papers $\Phi_{\mathbb{X}}(\tau)=\text{Prob}(L(1,\mathbb{X})>e^{\gamma}\tau)$. Each obtains:
$$\Phi_{\mathbb{X}}(\tau)=\exp\left(-C_1\frac{e^{\tau-C_0}}{\tau}+O\left(\frac{e^{\tau}}{\tau^2}\right)\right),$$ 
where 
\[C_1:=1 \text{ and }C_0:=\int_0^1\frac{\tanh(t)}{t}dt+\int_{1}^{\infty}\frac{\tanh(t)-1}{t}dt=0.8187\ldots \]
Similar behaviour appears when studying the distribution of Euler-Kronecker constants of quadratic fields, see \cite[Theorem 1.2]{Lam15} for details. 
As can be seen from the statement of Theorem \ref{distfuncX} we observe some pathological behaviour special to function fields. We no longer achieve two constants reflected above as  $C_0$ and $C_1$. In our case the value of both $C_0(q^{\{\log\kappa(\tau)\}})$  and $C_1(q^{\{\log\kappa(\tau)\}})$ varies, although they remain bounded as the argument varies between $1$ and $q$. Below is a graph of $C_0(t)$ for $1\le t< q$ taking $q=5$, and $q=9$ the first moduli which satisfy the hypothesis $q\equiv 1(\bmod\, 4)$.
\[\includegraphics[width=0.45\textwidth, height=0.45\textheight, keepaspectratio]{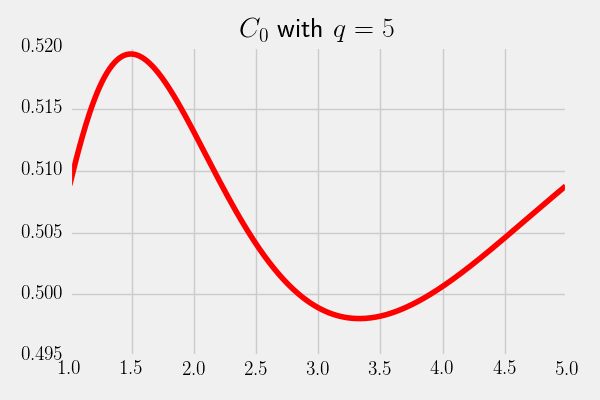}\qquad\includegraphics[width=0.45\textwidth, height=0.45\textheight, keepaspectratio]{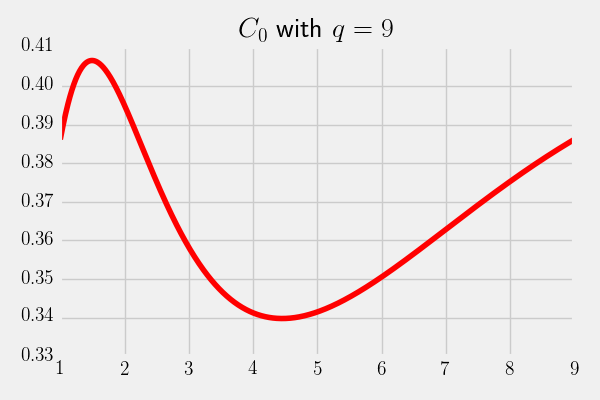}\]
Additionally, we also notice the coefficient $C_1$ which appears in all of the theorems describing the behaviour of $\Phi_{\mathbb{X}}(\tau)$( cf. \cite{GranSound,AY, Lam15}). We find over function fields that the coefficient $C_1$ is no longer fixed, but remains bounded between $-\ln(\cosh(q))/q+\tanh(q)$ and $1/\ln(q)-\ln(\cosh(c))/c+\tanh(c)$. Below is a graph of the behaviour of $C_1(t)$ for $1\le t<q$ with $q=5$ and $q=9$.
\[\includegraphics[width=0.45\textwidth, height=0.45\textheight, keepaspectratio]{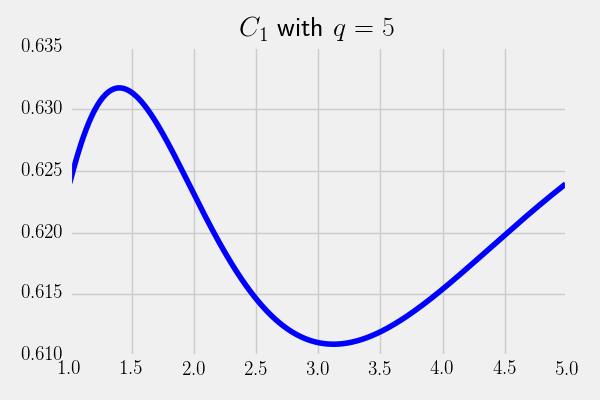}\qquad\includegraphics[width=0.45\textwidth, height=0.45\textheight, keepaspectratio]{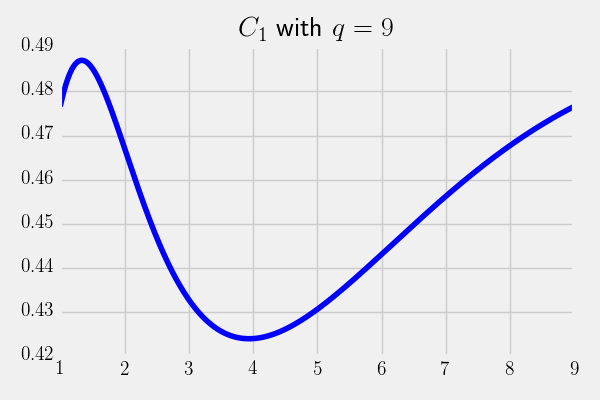}\]
Furthermore, we obtain the following unconditional bounds:
\begin{pro}\label{GRHBounds} 
Let $F$ be a monic polynomial, and $\chi$ be a non-trivial character on $\left(\mathbb{A}/F\mathbb{A} \right)^{\times}$. 
For any complex number $s$ with $\re(s)=1$ we have
\begin{equation}\label{BoundEdge}
\frac{\zeta_{\mathbb{A}}(2)}{2e^{\gamma}}(\log_2|F|+O(1))^{-1}\le|L(s, \chi)| \leq 2e^{\gamma}\log_2 |F|+ O(1).
\end{equation}
\end{pro}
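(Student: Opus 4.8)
The plan is to imitate Littlewood's argument for $|L(1,\chi)|$ (as carried out in \cite{GranSound,AY}), with the Riemann Hypothesis for curves over finite fields (Weil) replacing GRH. I would set $s=1+it$ and $u=q^{-s}$, so $|u|=q^{-1}$, and write $\mathcal{L}(u,\chi)=\sum_{f\text{ monic}}\chi(f)u^{\deg f}=L(s,\chi)$. Since $\chi$ is non-trivial this is a polynomial in $u$ of degree $\le \deg F-1$; expanding $\ln\mathcal{L}(u,\chi)=\sum_{k\ge 1}\frac{\psi_\chi(k)}{k}u^{k}$ with $\psi_\chi(k)=\sum_{\deg n=k}\Lambda(n)\chi(n)$, Weil's theorem — applied to the primitive character inducing $\chi$ together with the Euler factors at the primes dividing $F$ — shows every inverse root of $\mathcal{L}(u,\chi)$ has modulus $\le q^{1/2}$, i.e. $|\psi_\chi(k)|\le(\deg F)q^{k/2}$ for all $k$. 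In particular $\mathcal{L}(u,\chi)$ is zero-free in $|u|\le q^{-1/2}$, the series converges absolutely on $|u|=q^{-1}$, and for $\re(s)=1$ one gets the clean identity $\ln|L(s,\chi)|=\re\sum_{k\ge 1}\frac{\psi_\chi(k)}{k}q^{-ks}$.

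Next I would truncate at $\nu:=\lceil 2\log_2|F|\rceil$ — the factor $2$ is forced, since Weil gives only square-root cancellation and we need $q^{\nu/2}$ to dominate $\deg F=\log|F|$, which renders the tail $\sum_{k>\nu}\frac{\psi_\chi(k)}{k}q^{-ks}$ of size $O(1/\nu)$. Regrouping the remaining sum over prime powers and setting $w_P=\chi(P)q^{-it\deg P}$ (so $|w_P|\le 1$), it equals $\re\sum_{a\deg P\le\nu}\frac{w_P^{\,a}}{a|P|^{a}}$. Completing each inner sum over $a$ and using $\re\sum_{a\ge 1}\frac{w_P^{\,a}}{a|P|^{a}}=-\ln|1-w_P/|P||$ together with the elementary bounds $1-|P|^{-1}\le|1-w_P/|P||\le 1+|P|^{-1}$, I would obtain
\[
-\ln\!\!\prod_{\deg P\le\nu}\!\Bigl(1+\tfrac1{|P|}\Bigr)-o(1)\ \le\ \ln|L(s,\chi)|\ \le\ \ln\!\!\prod_{\deg P\le\nu}\!\Bigl(1-\tfrac1{|P|}\Bigr)^{-1}+o(1),
\]
where the $o(1)$ collects the $k$-tail and the sum over $P$ of the $a$-tails $\sum_{a>\nu/\deg P}\frac{1}{a|P|^{a}}$, the latter being $\ll\nu^{-1}q^{-\nu}\sum_{\deg P\le\nu}\deg P\ll\nu^{-1}$.

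The remaining input is a Mertens-type estimate over $\mathbb{F}_q[T]$, and here the exact identity $\sum_{\deg n=m}\Lambda(n)=q^{m}$ is decisive: it gives $\sum_{a\deg P\le\nu}\frac{1}{a|P|^{a}}=\sum_{m=1}^{\nu}\frac1m=\ln\nu+\gamma+O(1/\nu)$, and since replacing the constraint $a\deg P\le\nu$ by $\deg P\le\nu$ changes this only by $o(1)$, one finds $\prod_{\deg P\le\nu}(1-|P|^{-1})^{-1}=e^{\gamma}\nu\bigl(1+O(1/\nu)\bigr)$; dividing by $\prod_{\deg P\le\nu}(1-|P|^{-2})=\zeta_{\mathbb{A}}(2)^{-1}\bigl(1+O(q^{-\nu})\bigr)$ then yields $\prod_{\deg P\le\nu}(1+|P|^{-1})=\frac{e^{\gamma}\nu}{\zeta_{\mathbb{A}}(2)}\bigl(1+O(1/\nu)\bigr)$. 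Substituting these into the displayed inequalities, exponentiating, and recalling $\nu=2\log_2|F|+O(1)$, I get $|L(s,\chi)|\le e^{\gamma}\nu+O(1)=2e^{\gamma}\log_2|F|+O(1)$ and $|L(s,\chi)|\ge \zeta_{\mathbb{A}}(2)e^{-\gamma}\nu^{-1}\bigl(1+O(1/\nu)\bigr)=\frac{\zeta_{\mathbb{A}}(2)}{2e^{\gamma}}(\log_2|F|+O(1))^{-1}$, which is \eqref{BoundEdge}.

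The only genuinely non-elementary ingredient is Weil's Riemann Hypothesis, and it is used only to bound $\psi_\chi(k)$ for $\nu<k\le\deg F$; everything else is bookkeeping. The one place that needs care — and the reason the constants come out \emph{exactly} $2e^{\gamma}$ and $\zeta_{\mathbb{A}}(2)/(2e^{\gamma})$ rather than merely $O(\log_2|F|)$ and $\gg 1/\log_2|F|$ — is that $\nu$ must be pushed down to the smallest value the square-root cancellation from Weil allows, namely $\nu\sim 2\log_2|F|$: only then do the $k$-tail and the weighting discrepancy each contribute $O(1/\nu)$ to $\ln|L(s,\chi)|$, hence $O(1)$ to $|L(s,\chi)|$ after exponentiation.
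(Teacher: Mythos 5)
Your proposal is correct and follows essentially the same route as the paper: Weil's Riemann Hypothesis gives square-root cancellation that lets one truncate $\ln L(s,\chi)$ at degree $M\approx 2\log_2|F|$ (the paper's Lemma \ref{Truncation} with $M=2\log_2|F|$), after which each Euler factor is bounded trivially by $(1\mp 1/|P|)^{\mp1}$ and the Mertens-type estimate over $\mathbb{F}_q[T]$ (the paper's Lemma \ref{Mertens}) yields exactly the constants $2e^{\gamma}$ and $\zeta_{\mathbb{A}}(2)/(2e^{\gamma})$. Your re-derivation via the von Mangoldt expansion $\psi_\chi(k)$ and completion of the prime-power sums is only a cosmetic variation of the paper's argument.
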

It is important to note that in this setting Weil \cite{Weil} proved the Riemann Hypothesis (RH), hence these results are achieved unconditionally. We conjecture here that the true size for the extreme values of  $L(1,\chi_D)$ is half as large in keeping with the expected results in the quadratic number field case.
\begin{Con} \label{hDconjecture}
Let $n$ be large. 
\begin{equation*}
\max_{D\in\hn} L(1,\chi_{D})=e^{\gamma}(\log n+\log_2 n)+O(1),
\end{equation*}
and 
\begin{equation*}
\min_{D\in\hn}L(1,\chi_{D})= \zeta_{\mathbb{A}}(2)e^{-\gamma}(\log n+\log_2 n+O(1))^{-1}.
\end{equation*}
\end{Con}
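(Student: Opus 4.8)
The plan is to prove each equality as a matching pair: an $\Omega$-bound — the lower bound for the maximum, the upper bound for the minimum — together with a bound in the opposite direction. The $\Omega$-bounds should be within reach of the circle of ideas already used in the paper, while the matching bounds are the genuine difficulty, which is why the statement is recorded as a conjecture. The heuristic that fixes the constants is that, by Theorem~\ref{distfuncX}, the quantity $|\hn|\,\Phi_{\mathbb{X}}(\tau)=q^{n}\exp(-\Theta(q^{\tau}/\tau))$ drops below $1$ exactly when $\tau$ passes $\tau_{0}:=\log n+\log_{2}n+O(1)$, and similarly for $|\hn|\,\Psi_{\mathbb{X}}(\tau)$; so the conjecture asserts precisely that the extreme values sit at this probabilistic threshold, as over $\mathbb{Q}$ in \cite{GranSound,AY}.

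For the $\Omega$-bound on the maximum, write $L(1,\chi_{D})=\prod_{P}\bigl(1-\chi_{D}(P)/|P|\bigr)^{-1}$, fix $y$, and restrict to those $D\in\hn$ that are nonzero quadratic residues modulo every irreducible $P$ of degree $\le y$, so that $\chi_{D}(P)=1$ there and
\[
L(1,\chi_{D})=\Bigl(\prod_{\deg P\le y}\bigl(1-\tfrac{1}{|P|}\bigr)^{-1}\Bigr)\prod_{\deg P>y}\bigl(1-\tfrac{\chi_{D}(P)}{|P|}\bigr)^{-1}.
\]
The first factor equals $e^{\gamma}y+O(1)$: the function-field Mertens theorem $\prod_{\deg P\le y}(1-1/|P|)^{-1}=e^{\gamma}y+O(1)$ holds with the clean constant $e^{\gamma}$ because the would-be secondary constants cancel, a consequence of $\ln\zeta_{\mathbb{A}}(s)=-\ln(1-q^{1-s})$. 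For admissible $D$ the values $\chi_{D}(P)$ with $\deg P>y$ behave like the random signs $\mathbb{X}(P)$ of \eqref{defrandvar}, and first- and second-moment estimates against $L(1,\mathbb{X})$ — the same input underlying Theorem~\ref{momLthm} — show the tail product is $1+O(n^{-1/2})$ for a positive proportion of admissible $D$. Hence, provided admissible $D\in\hn$ exist with $y$ as large as $\tau_{0}$, we get $\max_{D\in\hn}L(1,\chi_{D})\ge e^{\gamma}(\log n+\log_{2}n)+O(1)$; replacing nonzero quadratic residues by non-residues and using $\prod_{\deg P\le y}(1+1/|P|)^{-1}=\zeta_{\mathbb{A}}(2)e^{-\gamma}y^{-1}(1+o(1))$ gives the companion bound for the minimum.

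The first point where this plan can fail is the count of admissible $D$. Heuristically it is $|\hn|\prod_{\deg P\le y}\tfrac{|P|-1}{2|P|}\asymp q^{n}2^{-\pi_{q}(\le y)}/(e^{\gamma}y)$, with $\pi_{q}(\le y):=\#\{P:\deg P\le y\}\sim q^{y+1}/((q-1)y)$, and this is $\gg1$ exactly when $y\le\tau_{0}$ — but the reduction to this count via orthogonality of the characters $\bigl(\tfrac{\cdot}{m}\bigr)$ needs $\deg m\le n$ for all $m\mid Q:=\prod_{\deg P\le y}P$, i.e.\ $\deg Q\le n$, i.e.\ only $y\le\log n+O(1)$, which already loses the term $\log_{2}n$. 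When $y$ is of size $\tau_{0}$ one has $\deg Q\asymp n\log n$, the divisor sum $\sum_{m\mid Q}\mu^{2}(m)\sum_{\deg D=n}\bigl(\tfrac{D}{m}\bigr)$ is swamped by terms with $\deg m>n$ whose size, $\asymp q^{n\log_{2}n}$ for the relevant $m$ from the Weil bound $\binom{\deg m-1}{n}q^{n/2}$, dwarfs the main term $q^{n}$, and one must instead invoke the large sieve, or a resonator, or the delicate threshold argument of \cite{GranSound}, to secure a single admissible $D$ with the largest feasible $y$. I expect this to be doable, but technical, and the crux of the $\Omega$-results.

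The serious obstacle is the matching upper bound $\max_{D\in\hn}L(1,\chi_{D})\le e^{\gamma}(\log n+\log_{2}n)+O(1)$, and its counterpart for the minimum. Proposition~\ref{GRHBounds}, which uses Weil's Riemann Hypothesis to replace $L(1,\chi_{D})$ by a short Euler product and then bounds $\chi_{D}(P)$ trivially, delivers only $|L(1,\chi_{D})|\le 2e^{\gamma}\log n+O(1)$; the stray factor $2$ comes from running the product out to degree $\approx 2\log n$ and from prime squares, and removing it means showing that a near-extremal $\chi_{D}$ must essentially have $\chi_{D}(P)=1$ for every $P$ of degree up to $(1+o(1))\tau_{0}$ and be generic beyond, which the analytic tools give no way to see. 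Equivalently, one would extend the distribution estimate of Theorem~\ref{distfuncchi}, valid now only for $\tau\le\log n-2\log_{2}n-\log_{3}n$, to a window around $\tau_{0}$ — even the one-sided bound $|\{D:L(1,\chi_{D})>e^{\gamma}\tau\}|\ll|\hn|\Phi_{\mathbb{X}}(\tau)$ there would give the upper bound, since that count would then be $o(1)$, hence $0$, at $\tau=\tau_{0}+1$. But pushing Theorem~\ref{distfuncchi} to the edge requires the complex moments of Theorem~\ref{momLthm} for $|z|$ of size $q^{\tau_{0}}\asymp n\log n$, whereas Theorem~\ref{momLthm} reaches only $|z|\le n/(260\log n\,\ln\log n)$, the shortfall coming from the truncation length needed to keep the relevant character (Weil) sums under control. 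Detecting events of probability $\asymp q^{-n}$ in a family of size $q^{n}$ genuinely needs essentially exponentially large moments, so this is outside the present circle of ideas; it is the function-field form of the Granville--Soundararajan conjecture (\cite{GranSound}), which remains open over $\mathbb{Q}$ even under GRH. I therefore expect the $\Omega$-bounds to be provable but the matching bounds — hence the conjecture as stated — to remain out of reach with current methods.
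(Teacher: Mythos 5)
This statement is recorded in the paper as a \emph{conjecture}: no proof of it is given, and your overall assessment agrees with the paper's stance. The constants are indeed dictated by the threshold where $|\hn|\Phi_{\mathbb{X}}(\tau)$ (resp. $|\hn|\Psi_{\mathbb{X}}(\tau)$) drops below $1$, the matching bounds (the upper bound for the maximum, the lower bound for the minimum) are exactly what Proposition \ref{GRHBounds} misses by the factor $2$, and closing that gap would require control of the distribution at $\tau\approx\log n+\log_2 n$, i.e. moments of size far beyond the range of Theorem \ref{momLthm}; this is the function-field form of the open problem in \cite{GranSound}, and the paper leaves it open as well.

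The one substantive correction concerns the $\Omega$-half, which you judged ``doable but technical, and the crux'' and left unresolved: the paper actually proves it, in Theorem \ref{OmegaResults}, and its witnesses are irreducible (hence monic square-free) polynomials of degree $n$, so they do lie in $\hn$ and realize the values $e^{\gamma}(\log n+\log_2 n)+O(1)$ and $\zeta_{\mathbb{A}}(2)e^{-\gamma}(\log n+\log_2 n+O(1))^{-1}$ in the conjecture. Your sketch prescribes the symbols $\chi_D(P)$ for $\deg P\le y$ and averages over all of $\hn$, and, as you correctly note, the orthogonality relation (Lemma \ref{orthogonality}) collapses once $\deg\prod_{\deg P\le y}P\asymp q^{y}$ exceeds $n$, i.e. already at $y\approx\log n$, which is why you had to gesture at a large sieve or resonator. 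The paper's device removes this obstacle differently: restrict to irreducible discriminants $Q$ of degree $N$ and prescribe $\left(\frac{P}{Q}\right)=\delta_P$ for all $\deg P\le m$ with $q^{m}\asymp N\log N$; by quadratic reciprocity \eqref{QuadRecip} this forces $\chi_Q(P)=\delta_P$ in that range. The count of such $Q$ (Lemma \ref{SetPrimes}) comes from expanding $\prod_{\deg P\le m}\bigl(1+\delta_P\left(\frac{P}{Q}\right)\bigr)$ and bounding $\sum_{\deg Q=N}\left(\frac{f}{Q}\right)$ for $f\mid\mathcal{P}(m)$ by the Weil estimate \eqref{CharSumGRH}: the moduli $f$ may have degree $\asymp N\log N$ and there are $2^{\Pi_q(m)}$ of them, but $\Pi_q(m)\asymp N/10$, and since $q\ge 5$ the loss $2^{\Pi_q(m)}$ is negligible against the $q^{N/2}$ saving, so the family is nonempty (in fact of size $\asymp q^{N}/(2^{\Pi_q(m)}N)$) unconditionally at $m=\log N+\log_2 N+O(1)$ — precisely the range where your approach stalls. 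Proposition \ref{Omega} then evaluates the first moment of $L(1,\chi_Q)$ over this family, and Mertens (Lemma \ref{Mertens}) gives $e^{\gamma}m+O(1)$, with the reciprocal analogue for $\delta_P=-1$. So the lower-bound half of the conjecture is a theorem of the paper by the prime-discriminant-plus-reciprocity route rather than the sieve or resonator routes you proposed, while the matching bounds — the actual content of the conjecture — remain open, as you said.
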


Finally, we also unconditionally obtain $\Omega$-results which are best possible, unlike in the case of number fields where the corresponding bounds for Dirichlet characters is only valid under the Generalized Riemann Hypothesis (GRH).  
\begin{thm}\label{OmegaResults}
Let $N$ be large.
There are irreducible polynomials $Q_1$ and $Q_2$ of degree $N$ such that
\begin{equation}\label{Positive1}
L(1, \chi_{Q_1})\geq e^{\gamma} (\log_2|Q_1|+ \log_3 |Q_1|)+ O(1),
\end{equation}
and 
\begin{equation}\label{Negative1}
L(1, \chi_{Q_2})\leq \zeta_{\mathbb{A}}(2) e^{-\gamma} (\log_2 |Q_1|+ \log_3 |Q_1|+ O(1))^{-1}.
\end{equation}
\end{thm}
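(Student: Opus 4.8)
The plan is to produce $Q_1$ and $Q_2$ explicitly, by forcing the Euler factors of $L(s,\chi_Q)$ at all small primes to point in the favourable direction and then controlling the remaining factors by Weil's Riemann Hypothesis. Fix a threshold $y=y(N)$, to be taken as large as the count below permits; one finds $y=\log N+\log_2 N+O(1)=\log_2|Q_1|+\log_3|Q_1|+O(1)$. Write $k$ for the number of monic irreducibles of degree $\le y$ and $M$ for their product, so that $k\asymp q^y/y\asymp N$ and $\deg M\asymp q^y\asymp N\log N$. For the first assertion I seek a monic irreducible $Q_1$ of degree $N$ with $\chi_{Q_1}(P)=1$ for every $\deg P\le y$ and with a negligible tail $\sum_{\deg P>y}\chi_{Q_1}(P)/|P|$; for the second I do the same with $\chi_{Q_2}(P)=-1$.

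Call a monic irreducible $Q$ of degree $N$ \emph{admissible} if $\chi_Q(P)=1$ for all $\deg P\le y$. As $\deg Q=N>y$, each such $\chi_Q(P)$ equals $\pm1$, so admissibility is detected by $\prod_{\deg P\le y}\tfrac12(1+\chi_Q(P))$. Expanding this product, using multiplicativity of $\chi_Q$ together with quadratic reciprocity over $\mathbb{A}$ — which for $q\equiv1\pmod 4$ is simply $\legendre{Q}{P}=\legendre{P}{Q}$ — the number of admissible $Q$ equals $2^{-k}\sum_{d\mid M}\sum_{Q}\legendre{Q}{d}$, the inner sum over monic irreducible $Q$ of degree $N$. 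The term $d=1$ contributes $(1+o(1))q^N/N$; for $d\ne 1$ the map $Q\mapsto\legendre{Q}{d}$ is a non-principal character modulo $d$ (since $d\mid M$ is square-free of positive degree), so by Weil's theorem its prime sum is $O\bigl((\deg d+1)q^{N/2}/N\bigr)$, and summing over $d\mid M$ the total error is $O\bigl(q^{N/2}\deg M/N\bigr)$ (the $2^{-k}$ cancelling against $\sum_{d\mid M}\deg d=2^{k-1}\deg M$). The defining property of $y$ is exactly that $q^N/(N2^k)$ beats this error; the resulting constraint is in essence $k\ln2<\tfrac12 N\ln q$, which simultaneously caps $y$ at $\log N+\log_2 N+O(1)$ and is satisfiable for every $q\equiv1\pmod4$ after the implicit constant in $y$ is fixed suitably. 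Thus the number of admissible $Q$ is $\sim q^N/(N2^k)>0$.

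For any admissible $Q$ I factor $L(1,\chi_Q)=\prod_{\deg P\le y}(1-|P|^{-1})^{-1}\cdot\prod_{\deg P>y}(1-\chi_Q(P)|P|^{-1})^{-1}$ and invoke the function-field Mertens estimate $\prod_{\deg P\le y}(1-|P|^{-1})^{-1}=e^{\gamma}y\,(1+O(1/y))$, whose constant is exactly $e^{\gamma}$ thanks to the identity $\sum_{d\mid n}d\,\pi_q(d)=q^n$ ($\pi_q(d)$ being the number of monic irreducibles of degree $d$). The heart of the matter is to make the tail factor $1+o(1/y)$, which I would establish by a second-moment argument over the admissible $Q$. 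With $S_m(Q)=\sum_{\deg P=m}\chi_Q(P)$ and a truncation at $M_0=3\log N$ (the range $m>M_0$ contributing $o(1/y)$ to the tail by Weil's theorem applied directly to $\chi_Q$), the quantity $\sum_{Q\text{ adm.}}\bigl(\sum_{y<m\le M_0}q^{-m}S_m(Q)\bigr)^2$ has diagonal part $\ll(\#\text{adm.})\,q^{-y}$, while its off-diagonal part, after expanding the admissibility indicator once more and applying reciprocity and Weil's theorem to $\sum_{Q}\legendre{Q}{P_1P_2d}$, is also $\ll(\#\text{adm.})\,q^{-y}$, by the very numerics that made the count positive. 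Pigeonholing produces an admissible $Q_1$ with $\sum_{\deg P>y}\chi_{Q_1}(P)/|P|=O(q^{-y/2})=o(1/y)$; since the higher prime-power contributions over $\deg P>y$ are $O(q^{-y})$, the tail factor is $1+o(1/y)$ and hence $L(1,\chi_{Q_1})=e^{\gamma}y+O(1)=e^{\gamma}(\log_2|Q_1|+\log_3|Q_1|)+O(1)$. The negative statement follows identically, with $\chi_{Q_2}(P)=-1$ imposed for $\deg P\le y$ (detected by $\prod_{\deg P\le y}\tfrac12(1-\chi_Q(P))$) and the Mertens step replaced by $\prod_{\deg P\le y}(1+|P|^{-1})^{-1}=\prod_{\deg P\le y}(1-|P|^{-1})\prod_{\deg P\le y}(1-|P|^{-2})^{-1}=\zeta_{\mathbb{A}}(2)e^{-\gamma}y^{-1}(1+O(1/y))$, yielding $L(1,\chi_{Q_2})=\zeta_{\mathbb{A}}(2)e^{-\gamma}(\log_2|Q_1|+\log_3|Q_1|+O(1))^{-1}$.

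The main obstacle is the interaction between the choice of $y$ and the control of the tail. To reach the predicted size the Mertens product forces $y$ up to its extremal value $\log N+\log_2 N$, and there the factor $2^k$ occurring both in the count and in the second moment is only a bounded factor in the exponent short of $q^{N/2}$, so the Riemann-Hypothesis error terms beat the main terms by the slimmest of margins; tracking the $q$-dependence of that margin and, at the same time, pushing the truncated tail's variance down to $\ll q^{-y}$ rather than the easy $O(1)$ — which would give only $L\asymp\log_2|Q|$ with an unspecified constant — is the delicate part. It is precisely here that Weil's theorem plays the role of GRH: over $\mathbb{Q}$ the analogous tail bound for $L(1,\chi_p)$ with $p$ prime is unavailable unconditionally, which is why the number-field version of this $\Omega$-result is known only under GRH.
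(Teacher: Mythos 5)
Your proposal is correct, but it reaches the conclusion by a genuinely different final step than the paper. Both arguments begin identically: prescribe $\chi_Q(P)=\delta_P$ for all irreducibles of degree at most $y\asymp \log N+\log_2 N$ (the paper's set $\mathcal{S}_N(n,\{\delta_P\})$), detect this condition by expanding $\prod_{\deg P\le y}\tfrac12(1+\delta_P\chi_Q(P))$, and count the admissible $Q$ using quadratic reciprocity \eqref{QuadRecip} and the Weil bound \eqref{CharSumGRH}; your constraint $k\ln 2<\tfrac12 N\ln q$ is exactly what drives the paper's choice \eqref{ChoiceNn} of $q^n\asymp N\log N$ (in fact the margin is exponential, since $2^{\Pi_q(n)}=q^{\Theta(N)}$ with a small constant, rather than "slim"). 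The divergence is afterwards: the paper proves Proposition \ref{Omega}, a \emph{first-moment} computation of $\sum_{Q\in\mathcal{S}_N}L(1,\chi_Q)$ — using that $L(1,\chi_Q)$ is a finite Dirichlet sum by \eqref{CharSum}, isolating the square terms $Ff=fh^2$, and bounding the rest by Weil — and then concludes by "maximum at least the average" together with Lemma \ref{Mertens}; no control of any individual $L$-value is needed. You instead fix the small Euler factors exactly, and control the tail $\sum_{\deg P>y}\chi_Q(P)/|P|$ of an \emph{individual} admissible $Q$ by a second-moment (variance) bound over the family, with the range $m>3\log N$ handled by Weil applied directly to $\chi_Q$; I checked the numerics of your off-diagonal estimate (the character $\legendre{\cdot}{P_1P_2d}$ is non-principal because $P_1P_2d$ is square-free, and the resulting $q^{N/2}\log N$ is indeed $\ll(\#\mathrm{adm})\,q^{-y}$), so the pigeonhole step is sound. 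Your route costs an extra moment computation but yields more: almost all admissible $Q$ satisfy $L(1,\chi_Q)=e^{\gamma}y+O(1)$ (respectively the reciprocal bound), whereas the paper's mean-value argument produces only the existence of one such $Q$; the paper's route is shorter and needs the non-square character-sum bound only once. The Mertens inputs ($e^{\gamma}$ in the positive case, $\zeta_{\mathbb{A}}(2)e^{-\gamma}$ via $\prod(1+|P|^{-1})^{-1}=\prod(1-|P|^{-1})\prod(1-|P|^{-2})^{-1}$ in the negative case) coincide with the paper's use of Lemma \ref{Mertens}.
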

The result \eqref{Positive1} can be compared with 
\cite[Theorem 1]{AiMaPey} a recent work discussing the size of $|L(1,\chi)|$ over a number field. The authors prove using a variant of the resonator method that for $\epsilon>0$ and sufficiently large $d$ there is a character $\chi (\bmod\,\, d)$ such that 
\[|L(1,\chi)|\ge e^{\gamma}(\ln_2d+\ln_3d-(1+\ln_24)-\epsilon).\] 
This result provides an improvement over a paper of Granville and Soundararajan \cite{GranSound06}, however, the paper does not give improvements for quadratic characters $\chi_d$ where $d$ varies over fundamental discriminants in the range $|d|\le x$ cf. \cite{GranSound, Lam15IMRN}.   

The result \eqref{Negative1} can be compared to \cite[Theorem 5a]{GranSound} which under the assumption of GRH proves for any $\epsilon>0$ and all large $x$ there are $\gg x^{1/2}$ primes $d\le x$ such that 
\[L(1,\chi_d)\le \frac{\zeta(2)}{e^{\gamma}}(\ln_2 d+\ln_3 d-\ln_24-\epsilon)^{-1}.\]
Unconditionally, for $\chi$ a Dirichlet character modulo $d$ we have the weaker results $|L(1,\chi)|\le \frac{\zeta(2)}{e^{\gamma}}(\ln_2(d)-O(1))^{-1}$ from \cite{GranSound06} .
\subsection{Applications}\label{applications:intro} From the theorems above and in light of \eqref{Artinform} if we specialize $n$ as $n=2g+1$ and letting the genus $g\to\infty$ we can prove analogous results about the class number $h_D$ over $\h$. This specialization is the equivalent of studying the imaginary quadratic extensions of $\mathbb{Q}$, as described by Artin. Below we state a few of the resulting corollaries for $h_D$ with $D\in\h$.
\begin{cor}\label{momhDthm}
Let $z\in \mathbb{C}$ be such that $|z|\le \frac{g}{130\log(g)\ln\log(g)}$. 
Then 
\begin{equation*}
\frac1{|\h|}\sum_{D\in\h}h_D^z=q^{gz}\sum_{\substack{f\text{ monic}}}\frac{d_z(f^2)}{|f|^2}\prod_{P|f}\left(1+\frac1{|P|}\right)^{-1}\left(1+O\left(\frac1{g^{11}}\right)\right).
\end{equation*}
\end{cor}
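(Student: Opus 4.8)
The plan is to read this off directly from Theorem~\ref{momLthm} together with Artin's class number formula~\eqref{Artinform}. For $D\in\h$, i.e.\ $D$ monic, square-free with $\deg D=2g+1$, formula~\eqref{Artinform} gives $L(1,\chi_D)=q^{-g}h_D$; in particular $L(1,\chi_D)>0$, so for every $z\in\mathbb{C}$ the principal-branch powers satisfy $h_D^{\,z}=q^{gz}L(1,\chi_D)^z$, using $(ab)^z=a^zb^z$ for positive reals $a,b$. Summing over $D\in\h$ and extracting the factor $q^{gz}$, which does not depend on $D$, yields
\[
\frac1{|\h|}\sum_{D\in\h}h_D^{\,z}=q^{gz}\cdot\frac1{|\h|}\sum_{D\in\h}L(1,\chi_D)^z .
\]

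I would then apply Theorem~\ref{momLthm} with $n=2g+1$ to the average on the right, which turns it into $\sum_{f\text{ monic}}\frac{d_z(f^2)}{|f|^2}\prod_{P|f}\left(1+\frac1{|P|}\right)^{-1}$ up to the relative factor $1+O(1/n^{11})$; this already has the shape of the asserted formula. Two routine points remain. First, admissibility of $z$: Theorem~\ref{momLthm} requires $|z|\le\frac{2g+1}{260\log(2g+1)\,\ln\log(2g+1)}$, and one checks that the corollary's hypothesis $|z|\le\frac{g}{130\log g\,\ln\log g}$ lies inside this range for $g$ large --- this is precisely why the constant is halved, since $2g+1\ge 2g$ while $\log(2g+1)=\log g+O(1)$ and $\ln\log(2g+1)=\ln\log g+o(1)$, so the two bounds coincide up to lower-order terms (the constant $260$ in Theorem~\ref{momLthm} being a deliberately safe over-estimate, which leaves room for this discrepancy). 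Second, with $n=2g+1$ one has $n^{11}\ge g^{11}$, so $O(1/n^{11})=O(1/g^{11})$. Combining,
\[
\frac1{|\h|}\sum_{D\in\h}h_D^{\,z}=q^{gz}\sum_{f\text{ monic}}\frac{d_z(f^2)}{|f|^2}\prod_{P|f}\left(1+\frac1{|P|}\right)^{-1}\left(1+O\left(\frac1{g^{11}}\right)\right),
\]
which is the claim.

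Since the statement is a direct specialization, I do not expect a genuine obstacle. The only point needing a little care is matching the two admissible ranges for $z$ under the substitution $n=2g+1$ --- weighing the factor of $2$ in the numerator against the $O(1)$ distortion of $\log(2g+1)$ relative to $\log g$ (and the analogous $o(1)$ distortion of the iterated logarithm). Everything else --- positivity of $L(1,\chi_D)$, the distributive law for complex powers of positive reals, and the inequality $n^{11}\ge g^{11}$ --- is immediate from results already in hand.
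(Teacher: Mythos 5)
Your proposal is correct and is essentially the paper's own (one-line) argument: apply Artin's formula \eqref{Artinform} to write $h_D^z=q^{gz}L(1,\chi_D)^z$ and invoke Theorem \ref{momLthm} with $n=2g+1$, noting $1/n^{11}\le 1/g^{11}$. One small caveat: the inclusion of ranges goes the other way from what you state, since $\tfrac{g}{130\log g\ln\log g}=\tfrac{2g}{260\log g\ln\log g}$ slightly exceeds $\tfrac{2g+1}{260\log(2g+1)\ln\log(2g+1)}$ for large $g$ (the loss in $\log(2g+1)$ outweighs the gain from $2g+1$), but this relative discrepancy of size $O(1/\log g)$ is harmless given the slack in the constant $260$, exactly as you observe and as the paper itself implicitly assumes.
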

This result follows from applying Artin's class number formula \eqref{Artinform} to Theorem \ref{momLthm} when $n=2g+1$. Additionally, from Theorems \ref{distfuncchi} and \ref{distfuncX} we obtain that the tail of the distribution of large (and small) values of $h_D$ over $\h$ is doubly exponentially decreasing:
\begin{cor}\label{disthD}
Let $g$ be large and $1\le \tau \le \log g-2\log_{2}g-\log_{3} g$. The number of discriminants $D\in \h$ such that 
\[h_D>e^{\gamma}\tau q^g\] 
equals
\[|\h|\cdot\exp\left(-C_1(q^{\{\log\kappa(\tau)\}})\frac{q^{\tau-C_0(q^{\{\log\kappa(\tau)\}})}}{\tau}\left(1+O\left(\frac{\log\tau}{\tau}\right)\right)\right),\] 
where $\kappa(\tau)$ is given by \eqref{kappadef},
$C_1(q^{\{\log\kappa(\tau)\}})$ and $C_0(q^{\{\log\kappa(\tau)\}})$ are positive constants depending on $\tau$ defined in Theorem \ref{distfuncX}. Similar estimates hold for the number of discriminants $D\in\h$ such that 
\[h_D< \frac{\zeta_{\mathbb{A}}(2)}{e^{\gamma}\tau}q^g.\]
\end{cor}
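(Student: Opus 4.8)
The plan is to obtain Corollary \ref{disthD} as a direct consequence of Artin's class number formula \eqref{Artinform} together with Theorems \ref{distfuncchi} and \ref{distfuncX}, specialised to $n=2g+1$; all of the substantive work already resides in those theorems, so only a translation and some routine bookkeeping on ranges and error terms remain. First I would use \eqref{Artinform}: for $D\in\h$ one has $L(1,\chi_D)=q^{-g}h_D$, so that for every $\tau>0$
\[h_D>e^{\gamma}\tau q^{g}\iff L(1,\chi_D)>e^{\gamma}\tau,\qquad h_D<\frac{\zeta_{\mathbb{A}}(2)}{e^{\gamma}\tau}q^{g}\iff L(1,\chi_D)<\frac{\zeta_{\mathbb{A}}(2)}{e^{\gamma}\tau}.\]
Hence the two counting functions in the corollary are literally equal to $|\{D\in\h:L(1,\chi_D)>e^{\gamma}\tau\}|$ and $|\{D\in\h:L(1,\chi_D)<\zeta_{\mathbb{A}}(2)e^{-\gamma}\tau^{-1}\}|$.

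Next I would apply Theorem \ref{distfuncchi} with $n=2g+1$. Since $\log$ is the base-$q$ logarithm, $\log(2g+1)=\log g+O(1)$ while $\log_2(2g+1)=\log_2 g+o(1)$ and $\log_3(2g+1)=\log_3 g+o(1)$, so the hypothesis $1\le\tau\le\log g-2\log_2 g-\log_3 g$ of the corollary guarantees $1\le\tau\le\log(2g+1)-2\log_2(2g+1)-\log_3(2g+1)$ once $g$ is large; thus Theorem \ref{distfuncchi} applies and gives
\[\frac{1}{|\h|}\bigl|\{D\in\h:L(1,\chi_D)>e^{\gamma}\tau\}\bigr|=\Phi_{\mathbb{X}}(\tau)\Bigl(1+O\Bigl(\tfrac{e^{\tau}(\log g)^2\log_2 g}{g}\Bigr)\Bigr),\]
together with the same identity for the small-values count with $\Phi_{\mathbb{X}}$ replaced by $\Psi_{\mathbb{X}}$.

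Inserting the asymptotics \eqref{PhiXtau} for $\Phi_{\mathbb{X}}(\tau)$ (and its analogue for $\psi_{\mathbb{X}}$) from Theorem \ref{distfuncX} then produces exactly the expression displayed in the corollary, once one checks that the relative error $O(e^{\tau}(\log g)^2\log_2 g/g)$ is swallowed by the intrinsic $1+O(\log\tau/\tau)$ factor sitting inside the exponential of \eqref{PhiXtau}. This is the one point that needs care, and it is where the upper bound on $\tau$ is used: because $q\equiv 1\pmod{4}$ forces $q\ge 5$, we have $\ln q>1$, hence $e^{\tau}=q^{\tau/\ln q}\le q^{\tau}\le g/((\log g)^2\log_3 g)$ throughout the admissible range — in fact $e^{\tau}\ll g^{\theta}$ with $\theta=1/\ln 5<1$ — so $e^{\tau}(\log g)^2\log_2 g/g=o(1)$. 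Writing $\Phi_{\mathbb{X}}(\tau)=\exp(-E(\tau))$ with $E(\tau)=C_1(q^{\{\log\kappa(\tau)\}})q^{\tau-C_0(q^{\{\log\kappa(\tau)\}})}\tau^{-1}(1+O(\log\tau/\tau))$, the identity above becomes $\exp(-E(\tau))(1+o(1))=\exp(-E(\tau)+o(1))$, and the additive $o(1)$ is absorbed into $E(\tau)$'s error term using the positivity and boundedness of $C_0$ and $C_1$ recorded in Theorem \ref{distfuncX}; the small-values case is identical, with $\Psi_{\mathbb{X}}$ in place of $\Phi_{\mathbb{X}}$.

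The main obstacle is essentially nonexistent: given Theorems \ref{distfuncchi} and \ref{distfuncX} the corollary is formal. The only genuinely delicate bit is the error reconciliation just described; should uniformity in $\tau$ be a concern, one may instead state the corollary with the factor $\bigl(1+O(e^{\tau}(\log g)^2\log_2 g/g)\bigr)$ left outside the exponential, which is precisely the shape Theorem \ref{distfuncchi} hands over, so that no reconciliation is needed at all.
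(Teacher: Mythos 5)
Your proposal is correct and follows essentially the same route as the paper: translate via Artin's formula $L(1,\chi_D)=q^{-g}h_D$, apply Theorem \ref{distfuncchi} with $n=2g+1$, then insert Theorem \ref{distfuncX} and absorb the relative error $O\bigl(e^{\tau}(\log g)^2\log_2 g/g\bigr)$ into the $1+O(\log\tau/\tau)$ factor using the range of $\tau$. Your error-reconciliation step is in fact spelled out more explicitly than in the paper (modulo the harmless slip writing $\log_3 g$ for $\log_2 g$ in the bound on $q^{\tau}$), so nothing further is needed.
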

%

Similarly Proposition \ref{GRHBounds} give analogous upper and lower bounds and Theorem \ref{OmegaResults}   provides analogous Omega results for $h_D$ with $D\in\h$. 

Specializing to $n=2g+2$, we can also make connections to the class number $h_D$ for $D\in\he$. This case is analogous to studying a real quadratic extension of $\mathbb{Q}$ and so the class number formula changes. Indeed for for $D\in\he$ Artin proves:
\begin{equation}\label{realArtin}
L(1,\chi_D)=\frac{q-1}{\sqrt{|D|}}h_DR_D,
\end{equation}
where $R_D$ denotes the regulator of $\mathcal{O}_D$. In this case $R_D$ is defined to be $\log|\epsilon|_{P_{\infty}}$ where $\epsilon$ is a fundamental unit of $\mathcal{O}_D$, $P_{\infty}$ is the prime at infinity such that $\text{ord}_{P_{\infty}}(\epsilon)<0$ and $$\log|\epsilon|_{P_{\infty}}=-\deg(P_{\infty})\text{ord}_{P_{\infty}}(\epsilon).$$  For more details on the regulator see \cite[Chapter 14]{Ro3}. The case of the mean value for $L(1,\chi_D)$ taken over $\he$ was investigated by Jung \cite{Jung1, Jung2}. Taking $n=2g+2$ we deduce from \eqref{realArtin} and Theorem \ref{momLthm}:
\begin{cor}\label{momhDRD}
Let $z\in \mathbb{C}$ be such that $|z|\le \frac{g}{130\log(g)\ln\log(g)}$. 
Then 
\begin{equation*}
\frac1{|\he|}\sum_{D\in\he}(h_DR_D)^z=\left(\frac{q^{g+1}}{q-1}\right)^z\sum_{\substack{f\text{ monic}}}\frac{d_z(f^2)}{|f|^2}\prod_{P|f}\left(1+\frac1{|P|}\right)^{-1}\left(1+O\left(\frac1{g^{11}}\right)\right).
\end{equation*}
\end{cor}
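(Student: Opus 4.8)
The plan is to reduce Corollary \ref{momhDRD} directly to Theorem \ref{momLthm} by substituting the second case of Artin's class number formula \eqref{realArtin}. First I would observe that for $D \in \he$ we have $\deg(D) = 2g+2$, so $|D| = q^{2g+2}$ and hence $\sqrt{|D|} = q^{g+1}$. Therefore \eqref{realArtin} rearranges to
\begin{equation*}
h_D R_D = \frac{\sqrt{|D|}}{q-1} L(1,\chi_D) = \frac{q^{g+1}}{q-1} L(1,\chi_D),
\end{equation*}
so that
\begin{equation*}
(h_D R_D)^z = \left(\frac{q^{g+1}}{q-1}\right)^z L(1,\chi_D)^z
\end{equation*}
for every $D \in \he$. (There is a minor point to address here: raising to a complex power $z$ requires a choice of branch, and this is fine because $h_D R_D > 0$, so one uses the principal real logarithm; this is consistent with how $L(1,\chi_D)^z$ is interpreted in Theorem \ref{momLthm}.)

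Next I would average over $D \in \he$. Since the constant factor $\left(q^{g+1}/(q-1)\right)^z$ does not depend on $D$, it pulls out of the sum:
\begin{equation*}
\frac{1}{|\he|}\sum_{D\in\he}(h_D R_D)^z = \left(\frac{q^{g+1}}{q-1}\right)^z \cdot \frac{1}{|\he|}\sum_{D\in\he}L(1,\chi_D)^z.
\end{equation*}
Now I would apply Theorem \ref{momLthm} with $n = 2g+2$. The hypothesis of Theorem \ref{momLthm} requires $|z| \le \frac{n}{260\log(n)\ln\log(n)}$; with $n = 2g+2$ this is implied by $|z| \le \frac{g}{130\log(g)\ln\log(g)}$ for $g$ large, since $\frac{2g+2}{260\log(2g+2)\ln\log(2g+2)} \ge \frac{g}{130\log g \ln\log g}$ eventually (the numerator gains a factor slightly more than $2$ while $\log(2g+2) \ln\log(2g+2) = (1+o(1))\log g \ln\log g$, using that $\log$ is base $q$). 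So the hypothesis on $z$ in the corollary indeed puts us in the admissible range of Theorem \ref{momLthm}. Substituting the conclusion of Theorem \ref{momLthm},
\begin{equation*}
\frac{1}{|\he|}\sum_{D\in\he}L(1,\chi_D)^z = \sum_{f\text{ monic}}\frac{d_z(f^2)}{|f|^2}\prod_{P|f}\left(1+\frac1{|P|}\right)^{-1}\left(1+O\left(\frac1{(2g+2)^{11}}\right)\right),
\end{equation*}
and absorbing the error term $O((2g+2)^{-11})$ into $O(g^{-11})$, gives the claimed identity.

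There is essentially no main obstacle here: the corollary is a formal consequence of Theorem \ref{momLthm} and the arithmetic identity \eqref{realArtin}. The only things requiring any care are the two bookkeeping points noted above — checking that the $z$-range in the corollary is contained in the $z$-range of Theorem \ref{momLthm} after setting $n=2g+2$, and confirming $\sqrt{|D|} = q^{g+1}$ for $D \in \he$ so that the constant prefactor comes out as $\left(q^{g+1}/(q-1)\right)^z$ rather than $\left(q^g/(q-1)\right)^z$ (this is exactly the point where the real-quadratic case differs from Corollary \ref{momhDthm}, where $n=2g+1$ gives $\sqrt{|D|}=q^{g+1/2}$ and the factor $\sqrt{q}$ in \eqref{Artinform} produces the clean $q^{gz}$). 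Everything else is just pulling a $D$-independent constant out of a finite average and invoking the already-proven moment asymptotic.
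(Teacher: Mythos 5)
Your proposal is correct and is essentially the paper's own proof: the paper deduces Corollary \ref{momhDRD} exactly by applying the real-quadratic case of Artin's formula \eqref{realArtin} with $\sqrt{|D|}=q^{g+1}$ and invoking Theorem \ref{momLthm} with $n=2g+2$. One small caveat: your claimed containment $\frac{g}{130\log g\ln\log g}\le\frac{2g+2}{260\log(2g+2)\ln\log(2g+2)}$ actually fails for large $g$ (the $\log_q 2/\log g$ loss in the denominator outweighs the $1/g$ gain in the numerator), but this factor $1+O(1/\log g)$ discrepancy is harmless because the constant $260$ in Theorem \ref{momLthm} has slack, and the paper's own statement of the corollary glosses over the same point.
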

Of course, similar results about the distribution of $h_DR_D$, upper and lower bounds and omega results for $D\in\he$ follow from Theorems \ref{distfuncchi} and \ref{distfuncX}, Proposition \ref{GRHBounds} and Theorem \ref{OmegaResults} respectively.

Finally, we give the outline of the paper. Section \ref{Prelim} will establish some facts about $\mathbb{A}$ and the properties $L$-functions have over this ring. Section \ref{complexmoments} will connect the complex moments of $L(1,\chi_D)$ to the expectation of the complex moments of the random model and provide the proof of Theorem \ref{momhDthm}. Section \ref{distrandmod} will be used to prove Theorem \ref{distfuncX}. Section \ref{MainThmProofs} proves Theorem \ref{distfuncchi} and Corollary \ref{disthD}. Section \ref{Optomegsection} proves the $\Omega$-results of Theorem \ref{OmegaResults}. 

\section{Preliminaries}\label{Prelim}
\subsection{Background for Function Fields}\label{functionfieldfacts} 

The norm of $f\in \mathbb{A}\setminus\{0\}$ is $|f|=q^{\deg(f)}$ and $|0|=0$. From \cite[Chapter 2]{Ro3} the Riemann zeta function is given by
\[\zeta_{\mathbb{A}}(s)=\prod_{\substack{P \text{ monic}\\\text{ irreducible}}}\left(1-\frac1{|P|^s}\right)^{-1}=\sum_{f \text{ monic}}\frac1{|f|^s}, \,\,\Re(s)>1.
\]
Since there are $q^k$ different monic polynomials of degree $k$, we can also rewrite $\zeta_{\mathbb{A}}(s)$ by collecting all the terms with respect to their degree: 
\[\zeta_{\mathbb{A}}(s)=\sum_{f \text{ monic}}\frac1{|f|^s}= \sum_{k\ge 0}\frac1{q^{k(s-1)}}=\frac1{1-q^{1-s}}. 
\]
Which is valid for $s\in\mathbb{C}\setminus\{1\}$. 
Let
\[\pi_q(n)=\#\{a \,\,|\,\, a \text{ monic}, \deg(a)=n  \text{ and } a \text{ is irreducible}\}.\]
The prime number theorem for polynomials gives the following about $\pi_q(n)$ (cf. \cite[Theorem 2.2]{Ro3}):
\begin{equation}\label{PNT1}
\sum_{k | m} k \pi_q(k)= q^m,
\end{equation}
and
\begin{equation}\label{PNT2}
\pi_q(n)=\frac{q^n}{n}+O\left(\frac{q^{n/2}}{n}\right).
\end{equation}
Let $F\in\mathbb{A}$ such that $\deg(F)>0$. From \cite[Chapter 4]{Ro3}, a { Dirichlet character modulo $F$}, $\chi: \mathbb{A}:\to\mathbb{C}$, satisfies
\begin{enumerate}
\item $\chi(a+bF)=\chi(a)$ for all $a,b\in\mathbb{A}$,
\item $\chi(a)\chi(b)=\chi(ab)$ for all $a,b\in\mathbb{A}$,
\item $\chi(a)\neq0 \Leftrightarrow (a,F)=1$.
\end{enumerate}
Then a Dirichlet $L$-function over $\mathbb{A}$ is given by 
\[L(s,\chi)=\sum_{f \text{ monic}}\frac{\chi(f)}{|f|^s}=\prod_{\substack{P \text{ monic}\\\text{ irreducible}}}\left(1-\frac{\chi(P)}{|P|^s}\right)^{-1} \text{ for } s\in\mathbb{C}.
\]
As with $\zeta_{\mathbb{A}}(s)$ we may collect terms with respect to the degree of the polynomial and write $L(s,\chi)$ as follows: 
\[L(s,\chi)=\sum_{f \text{ monic}}\frac{\chi(f)}{|f|^s}
=\sum_{k\ge 0}\frac1{q^{ks}}\sum_{\substack{f \text{ monic}\\ \deg(f)=k}}\chi(f).
\]

By \cite[Proposition 4.3]{Ro3}, if $\chi$ is a nontrivial Dirichlet character and $k\geq \deg F$ then 
\begin{equation}\label{CharSum}
\sum_{\substack{f \text{ monic}\\ \deg(f)=k}} \chi(f)= 0.
\end{equation}
That is to say that $L(s,\chi)$ is actually a polynomial in $q^{-s}$, whose degree is at most $\deg(F)-1$. Hence, we may also express it as a finite product of linear terms :
  \noindent $(1-\alpha_j(\chi)q^{-s})$, for $j=1,2,\ldots, n\le\deg(F)-1$.

Let $\Lambda(f)=\deg P$ if $f=P^k$ and $0$ otherwise, the function field analogue of the Von Mangoldt function, then from the proof of \cite[Theorem 4.8]{Ro3} we see 
\begin{equation}\label{CharSumPrimes}
\sum_{\substack{f \text{ monic}\\ \deg(f)=k}} \Lambda(f) \chi(f)=-\sum_{j=1}^{\deg F-1} \alpha_j(\chi)^k.
\end{equation}
 We mentioned in the introduction that A. Weil \cite{Weil} proved the analogue of the Riemann Hypothesis. In this setting this says that $|\alpha_j(\chi)|=1$ or $|\alpha_j(\chi)|=\sqrt{q}$. From this we deduce 
\begin{equation}\label{CharSumGRH}
\sum_{\deg P=k} \chi(P) \ll \frac{q^{k/2}}{k}\deg F,
\end{equation}
and that the Euler product representation of $L(s, \chi)$
$$ L(s, \chi)= \prod_{P} \left(1-\frac{\chi(P)}{|P|^s}\right)^{-1},$$ is actually valid for $\re(s)>1/2$. \\
One final remark about the size of $\hn$ defined in \eqref{defHn}, if $n>1$ then from \cite[Proposition 2.3]{Ro3}:
\[|\hn|=q^{n-1}(q-1).\]
Finally, let $D\in \h$. Consider 
\[C_D:y^2=D(x).\]
This defines a hyperelliptic curve over $\mathbb{F}_q$ with genus $g$. In this instance $h_D$ is associated to the number of $\mathbb{F}_q$-rational points on the Jacobian of $C_D$.
 
 Let $u=q^{-s}$ then the zeta function associated to  $C_D$  is defined by
\[Z_{C_D}(s)=\frac{P_{C_D}(u)}{(1-u)(1-qu)}.\]

Weil \cite{Weil} proved  $P_{C_D}(u)$ is a polynomial of degree $2g$. 
 In fact, from \cite[Propositions 14.6 and 17.7]{Ro3} we have 
\[P_{C_D}(u)=L(s,\chi_D)=\sum_{\text{sgn}(f)=1}\frac{\chi_D(f)}{|f|^s},\] 
where $\chi_D(f)$ is given by the Kronecker symbol: 
\[ \chi_D(f)=\left(\frac{D}{f}\right)_2=\left(\frac{D}{f}\right).\]
So from the point of view of $\eqref{Artinform}$ it's natural that $h_D$ should be associated to this hyperelliptic curve.
We note, if $q\equiv 1\pmod  4$ from quadratic reciprocity (cf. \cite[Theorem 3.5]{Ro3}) for any monic polynomials $F, G$ we have
\begin{equation}\label{QuadRecip}
\left(\frac{F}{G}\right)= \left(\frac{G}{F}\right),
\end{equation}
which explains the assumption we make on $q$. 
\subsection{Estimates for sums over irreducible monic polynomials}

Here and throughout we let $\Pi_q(n)$ be the number of monic irreducible polynomials $P$ such that \\
$\deg P\leq n$. 
\begin{lem}\label{PrimeSums}
Let $M$ be a large positive integer.
Then we have
\begin{equation}\label{NumberPrimes}
\Pi_q(M) = \zeta_{\mathbb{A}}(2)\frac{q^M}{M}\left(1+O\left(\frac{\log M}{M}\right)\right).
\end{equation}
\end{lem}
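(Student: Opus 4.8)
The plan is to prove the prime-counting estimate \eqref{NumberPrimes} by summing the prime number theorem for polynomials \eqref{PNT2} over all degrees up to $M$ and carefully evaluating the resulting partial sum of $q^n/n$ via partial summation (Abel summation). First I would write
\[
\Pi_q(M)=\sum_{n=1}^{M}\pi_q(n)=\sum_{n=1}^{M}\frac{q^n}{n}+O\!\left(\sum_{n=1}^{M}\frac{q^{n/2}}{n}\right),
\]
using \eqref{PNT2}. The error term is immediately controlled: $\sum_{n\le M}q^{n/2}/n\ll q^{M/2}/M$ since the sum is dominated by its last term (the ratio of consecutive terms is $q^{1/2}(1+o(1))>1$), and this is certainly $O\!\left(\zeta_{\mathbb{A}}(2)\frac{q^M}{M}\cdot\frac{\log M}{M}\right)$, indeed far smaller.

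The main term $S(M):=\sum_{n=1}^{M}q^n/n$ is the heart of the matter. I would isolate the largest term and show $S(M)=\frac{q^M}{M}\left(1+O\!\left(\frac1M\right)\right)$ — wait, one must be careful here, because a naive geometric comparison only gives a $1+O(1/q)$-type constant, not $1+o(1)$. The correct route: write $S(M)=\sum_{k=0}^{M-1}\frac{q^{M-k}}{M-k}=\frac{q^M}{M}\sum_{k=0}^{M-1}\frac{q^{-k}}{1-k/M}$. For $k\le \sqrt{M}$ we have $\frac{1}{1-k/M}=1+O(k/M)$, contributing $\frac{q^M}{M}\big(\sum_{k\ge0}q^{-k}+O(1/M\cdot\sum k q^{-k})\big)=\frac{q^M}{M}\big(\frac{q}{q-1}+O(1/M)\big)$; for $k>\sqrt M$ the terms are bounded by $q^{M}q^{-\sqrt M}\cdot M$, which is negligible. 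So $S(M)=\frac{q^{M}}{M}\cdot\frac{q}{q-1}\left(1+O(1/M)\right)$. Since $\zeta_{\mathbb{A}}(2)=\frac{1}{1-q^{-1}}=\frac{q}{q-1}$ by the closed form derived in Section~\ref{Prelim}, this is exactly $\zeta_{\mathbb{A}}(2)\frac{q^M}{M}(1+O(1/M))$, which is stronger than the claimed $(1+O(\log M/M))$.

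Combining the main term and the error term gives the result. The step I expect to be the main (only real) obstacle is getting the constant $\zeta_{\mathbb{A}}(2)$ to emerge correctly rather than just an $O(q^M/M)$ bound: this requires the $k$-by-$k$ expansion above (or equivalently Abel summation against $1/n$ followed by a tail estimate), not a single crude geometric-series comparison. Everything else — the PNT input, the closed form for $\zeta_{\mathbb{A}}(2)$, and bounding the square-root-size error — is routine. One could alternatively quote the exact identity \eqref{PNT1}, $\sum_{k\mid m}k\pi_q(k)=q^m$, but partial summation from \eqref{PNT2} is cleaner and the error saving is not needed.
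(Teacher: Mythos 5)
Your proposal is correct and takes essentially the same route as the paper: sum the polynomial prime number theorem \eqref{PNT2} over degrees, observe that only degrees near $M$ contribute to the main term, and let the geometric series in $q^{-k}$ produce $\zeta_{\mathbb{A}}(2)=q/(q-1)$. Your term-by-term expansion $\frac{1}{1-k/M}=1+O(k/M)$ weighted by $q^{-k}$ even gives the slightly sharper error $O(1/M)$, whereas the paper cuts at $n>M-\log M$, uses the uniform bound $\frac1n=\frac1M\left(1+O\left(\frac{\log M}{M}\right)\right)$, and so only states $O\left(\frac{\log M}{M}\right)$.
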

\begin{proof}
Note that
$$\Pi_q(M)=\sum_{n=1}^M \pi_q(n)=\sum_{n=1}^{M}\left(\frac{q^n}{n}+O\left(\frac{q^{n/2}}{n}\right)\right),$$
where the last equality comes from the Prime Number Theorem.  The main term in this sum is $q^M/M$, and we see that if $n\le M-\log M,$ then $q^n/n\ll q^M/M^2$. Hence we have that 
\[\Pi_q(M)=\sum_{M-\log M< n\le M}\frac{q^n}{n}+O\left(\frac{q^M}{M^2}\right).\]
Then for  $n\in(M-\log M, M]$ we have $\frac1n=\frac1M\left(1+O\left(\frac{\log M}{M}\right)\right).$ Therefore, 
\[\Pi_q(M)=\frac{q^M}M\left(1+O\left(\frac{\log M}{M}\right)\right)\sum_{l<\log M}\frac{1}{q^l}=\zeta_{\mathbb{A}}(2)\frac{q^M}M\left(1+O\left(\frac{\log M}{M}\right)\right).\]
 \end{proof}
\begin{lem}\label{Truncation}
Let $F$ be a monic polynomial, and $\chi$ be a non-trivial character on $\left(\mathbb{A}/\mathbb{A} F\right)^{\times}$. For a positive integer $M$ and any complex number $s$ with $\re(s)=1$
 we have
$$\ln L(s, \chi )= -\sum_{\deg P\le M}\ln\left(1-\frac{\chi(P)}{|P|^s}\right)+ O\left(\frac{q^{-M/2}}{M}\deg F\right). 
$$
\end{lem}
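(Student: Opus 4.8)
The plan is to begin from the Euler product
\[
L(s,\chi) = \prod_{P}\left(1 - \frac{\chi(P)}{|P|^s}\right)^{-1},
\]
which, as recorded just after \eqref{CharSumGRH}, is valid for $\re(s) > 1/2$ thanks to Weil's Riemann Hypothesis, in particular on the line $\re(s) = 1$ (where $L(s,\chi)\ne 0$). Taking logarithms there gives
\[
\ln L(s,\chi) = -\sum_{P}\ln\left(1 - \frac{\chi(P)}{|P|^s}\right) = \sum_{P}\sum_{a\ge 1}\frac{\chi(P)^a}{a|P|^{as}},
\]
and subtracting off the finitely many primes with $\deg P \le M$ isolates the tail
\[
\mathcal E := \ln L(s,\chi) + \sum_{\deg P \le M}\ln\left(1 - \frac{\chi(P)}{|P|^s}\right) = \sum_{\deg P > M}\sum_{a\ge 1}\frac{\chi(P)^a}{a|P|^{as}}.
\]
It remains to show $\mathcal E \ll q^{-M/2}(\deg F)/M$.

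I would split $\mathcal E = \mathcal E_1 + \mathcal E_{\ge 2}$ according to whether $a = 1$ or $a \ge 2$. The higher prime powers cause no trouble: bounding each term by its absolute value,
\[
|\mathcal E_{\ge 2}| \le \sum_{\deg P > M}\sum_{a \ge 2}\frac{1}{|P|^a} \ll \sum_{\deg P > M}\frac{1}{|P|^2} = \sum_{n > M}\frac{\pi_q(n)}{q^{2n}} \ll \sum_{n > M}\frac{1}{n q^{n}} \ll \frac{q^{-M}}{M},
\]
where \eqref{PNT2} is used to estimate $\pi_q(n)$. Since $\deg F \ge 1$ and $q^{-M} \le q^{-M/2}$, this is comfortably inside the claimed error.

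The substantive part is $\mathcal E_1 = \sum_{\deg P > M}\chi(P)|P|^{-s}$: a term-by-term estimate would only give $\sum_{\deg P > M}|P|^{-1} \asymp \sum_{n > M} 1/n$, which diverges, so one must exploit the cancellation in the prime character sums supplied by RH. Grouping primes by degree and using $|q^{-ns}| = q^{-n}$ on $\re(s) = 1$,
\[
\mathcal E_1 = \sum_{n > M}\frac{1}{q^{ns}}\sum_{\deg P = n}\chi(P),
\qquad\text{so}\qquad
|\mathcal E_1| \le \sum_{n > M}\frac{1}{q^{n}}\left|\sum_{\deg P = n}\chi(P)\right|.
\]
Now \eqref{CharSumGRH} (a consequence of \eqref{CharSumPrimes} together with $|\alpha_j(\chi)| \le \sqrt q$) gives $\big|\sum_{\deg P = n}\chi(P)\big| \ll q^{n/2}(\deg F)/n$, whence
\[
|\mathcal E_1| \ll (\deg F)\sum_{n > M}\frac{q^{-n/2}}{n} \ll \frac{q^{-M/2}}{M}\,\deg F.
\]
Combining the two bounds yields $\mathcal E \ll q^{-M/2}(\deg F)/M$, which is the assertion.

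The one place that needs care rather than calculation is the convergence bookkeeping on $\re(s) = 1$: the full logarithmic series is not absolutely convergent there, so the splitting above should be justified by noting that each of the finitely many primes with $\deg P \le M$ contributes an absolutely convergent geometric-type series in $a$, while the remaining tail, once its primes are organized by degree, is absolutely convergent precisely because of the RH bound $\sum_{\deg P = n}\chi(P) \ll q^{n/2}(\deg F)/n$ used above. Everything else is routine.
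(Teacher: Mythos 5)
Your proposal is correct and follows essentially the same route as the paper: both start from the Euler product valid for $\re(s)>1/2$ (Weil's RH), split the logarithm at $\deg P\le M$, bound the linear terms of the tail via the character-sum estimate \eqref{CharSumGRH} to get $\deg F\sum_{k>M}q^{-k/2}/k\ll q^{-M/2}(\deg F)/M$, and absorb the higher prime powers into an $O(\sum_{\deg P>M}|P|^{-2})$ term. Your treatment is, if anything, slightly more careful than the paper's (explicit $a\ge 2$ split and the convergence remark), but there is no substantive difference.
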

\begin{proof}
Split the sum as
\begin{align*}
\ln L(s,\chi)&=-\sum_{\deg P\le M}\ln\left(1-\frac{\chi(P)}{|P|^s}\right)-\sum_{\deg P>M}\ln\left(1-\frac{\chi(P)}{|P|^s}\right).\\
\end{align*}
The error term follows from \eqref{CharSumGRH} as below
\begin{align*}
\sum_{k>M}\sum_{\deg P=k}\ln\left(1-\frac{\chi(P)}{|P|}\right)&=\sum_{k>M}\sum_{\deg P=k}\frac{\chi(P)}{|P|^s}+O\left(\sum_{\deg P>M}\frac1{|P|^2}\right)\\
&=\sum_{k>M}\frac1{q^k}\sum_{\deg P=k}\chi(P)+O(q^{-M})\\
&\ll \deg F\sum_{k>M}\frac1{kq^{k/2}}\\
&\ll \deg F \frac{q^{-M/2}}{M}.
\end{align*}
\end{proof}
\noindent
%

We now prove a refined form of a Mertens' type estimate due to Rosen \cite{Ro1}.
\begin{lem}\label{Mertens}
Let $M$ be large. Then, we have 
$$ -\sum_{\deg P\leq M} \ln\left(1-\frac{1}{|P|}\right)= \ln M+\gamma+\frac{1}{2M}+O\left(\frac{1}{M^2}\right).
$$
\end{lem}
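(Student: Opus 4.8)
The plan is to start from the prime number theorem for polynomials in the sharp form \eqref{PNT2}, expand the logarithm in a power series, and carefully track the main term, the secondary $\frac{1}{2M}$ term, and the error. First I would write
\[
-\sum_{\deg P \le M}\ln\left(1-\frac{1}{|P|}\right) = \sum_{\deg P \le M}\sum_{k\ge 1}\frac{1}{k|P|^k} = \sum_{k\ge 1}\frac{1}{k}\sum_{n=1}^{M}\frac{\pi_q(n)}{q^{nk}}.
\]
For $k\ge 2$ the inner sum, extended to all $n\ge 1$, converges and contributes a constant; writing $\sum_{n=1}^{\infty}\pi_q(n)q^{-nk}$ and subtracting the tail $\sum_{n>M}\pi_q(n)q^{-nk}\ll \sum_{n>M}q^{n(1-k)}/n \ll q^{-M}/M$ (using \eqref{PNT2}), the $k\ge 2$ part equals a constant plus $O(q^{-M}/M)$. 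The $k=1$ term is $\sum_{n=1}^{M}\pi_q(n)/q^n$, and substituting $\pi_q(n) = q^n/n + O(q^{n/2}/n)$ gives $\sum_{n=1}^{M}\frac1n + O\!\big(\sum_{n\le M} q^{-n/2}/n\big) = \ln M + \gamma + \frac{1}{2M} + O(1/M^2)$ after using the standard asymptotic expansion $\sum_{n=1}^{M}\frac1n = \ln M + \gamma + \frac{1}{2M} + O(1/M^2)$, the error from the $O(q^{n/2}/n)$ terms being absorbed into a constant up to $O(q^{-M/2}/M)$.

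The one genuine subtlety is identifying the resulting constant as exactly $\gamma$ (the Euler--Mascheroni constant), rather than $\gamma$ plus some $q$-dependent quantity coming from the $k\ge 2$ contributions and the $O(q^{n/2}/n)$ tails. Here I would invoke the known Mertens-type theorem of Rosen \cite{Ro1}, which already establishes $-\sum_{\deg P\le M}\ln(1-|P|^{-1}) = \ln M + \gamma + o(1)$; since my computation above shows the left side equals $\ln M + (\text{some absolute constant}) + \frac{1}{2M} + O(1/M^2)$, comparing the two forces that constant to be $\gamma$. This is exactly why the statement says "a refined form of a Mertens' type estimate due to Rosen'' — the novelty is only the secondary term $\frac{1}{2M}$ and the power-saving error, not the constant.

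The main obstacle, then, is purely bookkeeping: making sure the $\frac{1}{2M}$ term survives intact. It comes solely from the $k=1$, main-term piece $\sum_{n=1}^{M}\frac1n$, so I need the error in $\pi_q(n) = q^n/n + O(q^{n/2}/n)$ to contribute only $O(q^{-M/2}/M)$ — which it does, since $\sum_{n=1}^{M} q^{-n/2}/n$ converges to a constant with tail $O(q^{-M/2}/M)$ — and I need all $k\ge 2$ contributions to be constant up to $O(1/M^2)$, which holds since their $M$-dependence is $O(q^{-M}/M)$. Collecting the constant as $\gamma$ via Rosen's theorem and combining the error terms ($O(q^{-M/2}/M) \subseteq O(1/M^2)$ for $q\ge 2$ and $M$ large) completes the proof.
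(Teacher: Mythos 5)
Your proposal is correct, but it takes a genuinely different route from the paper's. You organize the double sum by the power $k$ of $1/|P|$ ($k=1$ versus $k\ge 2$) and insert the prime number theorem \eqref{PNT2}; this yields the harmonic sum $\sum_{n\le M}1/n$ plus an a priori unknown $q$-dependent constant (coming from the convergent $k\ge2$ contribution and from the PNT error terms at $k=1$), which you then identify as $0$ by comparing with Rosen's qualitative Mertens estimate \cite{Ro1}. The paper instead splits the double sum $\sum_{k\le M}\sum_{\ell\ge1}\pi_q(k)/(\ell q^{k\ell})$ according to whether $k\ell\le M$ or $k\ell>M$, and on the region $k\ell\le M$ uses the exact identity \eqref{PNT1}, $\sum_{k\mid m}k\pi_q(k)=q^m$, so the main part collapses exactly to $\sum_{m\le M}1/m$ while the complementary region contributes only $O(q^{-M/2})$. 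That route is self-contained: the Euler--Mascheroni constant emerges directly from the Euler--Maclaurin expansion of the harmonic sum, with no constant-matching and an exponentially small error beyond it, whereas your argument outsources the identification of the constant to the literature. Your approach is legitimate provided you verify that Rosen's theorem, specialized to $\mathbb{A}=\mathbb{F}_q[T]$ and normalized by the degree cutoff $M$, really gives the constant $\gamma$ with no extra $q$-dependent term (for general function fields the constant involves the residue of the zeta function at $s=1$; for $\mathbb{F}_q[T]$ it does reduce to $\gamma$), since that is precisely the quantity your computation leaves undetermined; your bookkeeping of the secondary term $1/(2M)$ and of the error terms is otherwise sound.
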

\begin{proof}
We have
$$ -\sum_{\deg P\leq M} \ln\left(1-\frac{1}{|P|}\right)= \sum_{\deg P\leq M} \sum_{\ell =1}^{\infty} \frac{1}{\ell |P|^{\ell}} =\sum_{k\leq M} \sum_{\ell =1}^{\infty} \frac{\pi_q(k)}{\ell q^{\ell k}}=\sum_{k\ell\leq M}\frac{\pi_q(k)}{\ell q^{\ell k}}+\sum_{\substack{k\leq M\\ k\ell>M}} \frac{\pi_q(k)}{\ell q^{\ell k}}.
$$
By making the change of variables $m=k\ell$ and using $\eqref{PNT1}$, we deduce that the first sum on the right hand side of the last identity equals
$$ \sum_{k\ell\leq M}\frac{\pi_q(k)}{\ell q^{\ell k}}=\sum_{m\leq M} \frac{1}{q^m m}\sum_{k | m} k \pi_q(k)=\sum_{m\leq M}\frac{1}{m}=\ln M+\gamma+\frac{1}{2M} +O\left(\frac{1}{M^2}\right).$$
The result follows upon noting that
$$ 
\sum_{\substack{k\leq M\\ k\ell>M}} \frac{\pi_q(k)}{\ell q^{\ell k}}\ll \sum_{\substack{k\leq M\\ k\ell>M}} q^{k(1-\ell)} \ll \sum_{2\leq \ell\leq M}\sum_{\frac{M}{\ell}<k\leq M} q^{k(1-\ell)}+ q^{-M}\ll q^{-M} \sum_{2\leq \ell\leq M} q^{\frac{M}{\ell}}\ll q^{-M/2}.
$$
\end{proof}
\subsection{Proof of Proposition \ref{GRHBounds}}
\begin{proof}[Proof of Proposition \ref{GRHBounds}]

For $\re(s)=1$,  we use Lemma \ref{Truncation} and together with the choice $M=2\log\log|F|$ to get
$$\ln L(s, \chi)=-\sum_{\deg P\leq M} \ln\left(1-\frac{\chi(P)}{|P|^{s}}\right)+ O\left(\frac{1}{M}\right).$$ 
Using this estimate together with Lemma \ref{Mertens} we deduce that
$$ |L(s, \chi)|\leq \prod_{\deg P\leq M}\left(1-\frac{1}{|P|}\right)^{-1}\left(1+O\left(\frac{1}{M}\right)\right) \leq e^{\gamma} M +O(1),
$$
which completes the proof of the upper bound in \eqref{BoundEdge}. To see the lower bound, note that from Lemma \ref{Truncation} we have 

\[|L(s,\chi)|\ge \prod _{\deg(P)\le M}\left(1+\frac{1}{|P|}\right)^{-1}\left(1+O\left(\frac{1}{M}\right)\right),\]
and from Lemma \ref{Mertens}
\[\prod _{\deg(P)\le M}\left(1+\frac{1}{|P|}\right)^{-1}=\prod _{\deg(P)\le M}\frac{\left(1-\frac{1}{|P|^2}\right)^{-1}}{\left(1-\frac{1}{|P|}\right)^{-1}}\ge \frac{\zeta_{\mathbb{A}}(2)}{e^{\gamma}M+O(1)}.\]
\end{proof}
\subsection{Sums over $\hn$.}
The orthogonality relation:

\begin{lem}\label{orthogonality}
Let $f$ be a monic polynomial. If $f$ is a square in $\mathbb{A}$, then
$$ \sum_{D\in \hn} \chi_D(f)= |\hn| \cdot \prod_{P \mid f} \left(1+\frac{1}{|P|}\right)^{-1}+ O\left(\sqrt{|\hn|}\right).$$
Furthermore, if $f$ is not a square in $\mathbb{A}$, then
$$ \sum_{D\in \hn} \chi_D(f) \ll \sqrt{|\hn|} \cdot 2^{\deg f}.$$
\end{lem}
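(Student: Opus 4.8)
The plan is to detect the square-free condition on $D$ by M\"obius inversion, reducing the sum to an inner character sum over $B$, and then to split according to whether $f$ is a square. Writing $\mu^2(D)=\sum_{A^2|D}\mu(A)$ and putting $D=A^2B$, and using $\legendre{A^2B}{f}=\legendre{B}{f}$ when $(A,f)=1$ (and $0$ otherwise), one is left with
\[
\sum_{D\in\hn}\chi_D(f)=\sum_{\substack{A\text{ monic},\ (A,f)=1\\ \deg A\le n/2}}\mu(A)\,S(n-2\deg A),\qquad S(m):=\sum_{\substack{B\text{ monic}\\ \deg B=m}}\legendre{B}{f}.
\]
Everything then comes down to estimating $S(m)$ uniformly in $m$, together with the trivial count $\#\{A\text{ monic}:\deg A=a\}=q^{a}$ for the outer variable.

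If $f$ is not a square, then $\legendre{\cdot}{f}$ is a non-principal Dirichlet character modulo $f$ (some prime divisor of $f$ occurs to an odd power), so $S(m)$ is the coefficient of $u^m$ ($u=q^{-s}$) in $L(s,\legendre{\cdot}{f})$, which by \eqref{CharSum} is a polynomial of degree at most $\deg f-1$; writing it as $\prod_j(1-\alpha_j u)$, Weil's Riemann Hypothesis gives $|\alpha_j|\le\sqrt q$, hence $|S(m)|\le\binom{\deg f-1}{m}q^{m/2}$ and $S(m)=0$ for $m\ge\deg f$. Since $q^{\deg A}q^{(n-2\deg A)/2}=q^{n/2}$ and $\sum_m\binom{\deg f-1}{m}=2^{\deg f-1}$, the displayed sum is $\le q^{n/2}2^{\deg f-1}\ll\sqrt{|\hn|}\,2^{\deg f}$, which is the second assertion.

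If $f$ is a square, then $\legendre{\cdot}{f}$ is the principal character modulo $\mathrm{rad}(f)$, so $S(m)=\#\{B\text{ monic}:\deg B=m,\ (B,f)=1\}$; from the identity $\sum_m S(m)u^m=(1-qu)^{-1}\prod_{P|f}(1-u^{\deg P})$ one reads off that $S(m)=q^m\prod_{P|f}(1-|P|^{-1})$ exactly once $m\ge\deg f$, the finitely many terms with $m<\deg f$ being absorbed into the error $O(\sqrt{|\hn|})$. Substituting the main term, the outer sum becomes $\prod_{P|f}(1-|P|^{-1})\sum_{(A,f)=1,\ \deg A\le n/2}\mu(A)q^{n-2\deg A}$; completing the $A$-sum to all monic $A$ coprime to $f$ costs $O(q^{n/2})$ because $\sum_{\deg A>n/2}|A|^{-2}\ll q^{-n/2}$, and the completed series equals $q^n\zeta_{\mathbb{A}}(2)^{-1}\prod_{P|f}(1-|P|^{-2})^{-1}$. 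Since $q^n\zeta_{\mathbb{A}}(2)^{-1}=q^{n-1}(q-1)=|\hn|$ and $\prod_{P|f}\frac{1-|P|^{-1}}{1-|P|^{-2}}=\prod_{P|f}(1+|P|^{-1})^{-1}$, this yields $|\hn|\prod_{P|f}(1+|P|^{-1})^{-1}+O(\sqrt{|\hn|})$, the first assertion.

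The step I expect to be the main obstacle is the non-square case: an elementary treatment of $S(m)$ would give only a bound of the shape $q^{m(1-\delta)}$, which is too weak to survive summation over $A$, and it is precisely Weil's bound $|\alpha_j|\le\sqrt q$ that produces the clean factor $q^{n/2}$. A secondary point needing care is that $S(m)$ is supported on $m<\deg f$, so only the outer terms with $\deg A>(n-\deg f)/2$ contribute, and one must check that after the RH input these still sum to $\ll\sqrt{|\hn|}\,2^{\deg f}$; this works because the relevant binomial coefficients sum to $2^{\deg f-1}$ rather than being bounded crudely by $2^{\deg f}$ term by term.
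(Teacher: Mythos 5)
Your strategy (M\"obius inversion to detect square-freeness, reduction to the inner character sums $S(m)$, and Weil's Riemann Hypothesis for non-square $f$) is sound, and it is essentially the standard argument behind the facts the paper invokes; note the paper itself gives no proof here, it simply quotes Proposition 5.2 and Lemma 6.4 of \cite{AnKe}. Your non-square case is complete and correct: $S(m)=0$ for $m\ge\deg f$ by \eqref{CharSum}, the coefficient bound $|S(m)|\le\binom{\deg f-1}{m}q^{m/2}$ follows from $|\alpha_j|\le\sqrt{q}$, and summing $q^{\deg A}|S(n-2\deg A)|$ over $\deg A\le n/2$ gives $q^{n/2}2^{\deg f-1}\ll\sqrt{|\hn|}\,2^{\deg f}$, as required.

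The genuine gap is in the square case, at the sentence ``the finitely many terms with $m<\deg f$ being absorbed into the error $O(\sqrt{|\hn|})$''. For such $m$ one has $S(m)-q^{m}\prod_{P\mid f}(1-|P|^{-1})=-\sum_{e\mid\mathrm{rad}(f),\,\deg e>m}\mu(e)q^{m-\deg e}$, which your argument only controls by the divisor count $2^{\omega(f)}$ (or trivially by $q^{m}$); since $\sum_{a\le n/2}q^{a}\ll q^{n/2}$, the absorption costs $O\big(2^{\omega(f)}\sqrt{|\hn|}\big)$, or $O\big(\sqrt{|\hn|\,|\mathrm{rad}(f)|}\big)$ with the trivial bound, not $O(\sqrt{|\hn|})$. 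Moreover this is not just a missing detail: no error term $O(\sqrt{|\hn|})$ uniform in $f$ can hold. Take $n=2k$ and $f=\big(\prod_{\deg P=k}P\big)^{2}$; then $\sum_{D\in\hn}\chi_D(f)$ counts square-free monic $D$ of degree $2k$ with no prime factor of degree $k$, and a short inclusion--exclusion shows this exceeds $|\hn|\prod_{P\mid f}(1+|P|^{-1})^{-1}$ by $\gg q^{n-1}/n^{2}$, which dwarfs $\sqrt{|\hn|}$. So what you actually prove is the lemma with an $f$-dependent factor in the error, e.g.\ $O\big(2^{\omega(f)}\sqrt{|\hn|}\big)$; that version suffices for every use in the paper (the lemma is only applied with $|f|\le|D|^{1/3}$, and the error is then damped by $|f|^{-2}$), but it does not, and cannot, give the stated uniform $O(\sqrt{|\hn|})$. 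To recover the statement as written you would need a restriction on $\deg f$ relative to $n$, and you should compare with the precise hypotheses of the cited results in \cite{AnKe}.
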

\begin{proof}
The first estimate follows from Proposition 5.2 of \cite{AnKe}, while the second follows from Lemma 6.4 of \cite{AnKe}.
\end{proof}
\section{Complex moments of $L(1, \chi)$}\label{complexmoments}

Let $D\in\hn$, $z\in\mathbb{C}$ such that $|z|\ll \log|D|/(\log_2|D|\ln\log_2|D|)$. Let $\chi_D(f)=\legendre{D}{f}$.
 We we recall that $d_z(f)$ is defined as in \eqref{gendivfundef}. We will prove the following key lemma which will allow us to connect our complex moments of the random model to the complex moments of $L(1,\chi_D)$.

\begin{lem}\label{keylemma}
Let $D\in \hn$. Let $A>4$ be a constant $z\in\mathbb{C}$ such that $|z|\le \frac{\log|D|}{10A\log_2|D|\ln\log_2|D|}$ and $M=A\log_2|D|$.
Then 
\[L(1,\chi_D)^z=\left(1+O\left(\frac1{(\log|D|)^B}\right)\right)\sum_{\substack{f \text{ monic}\\|f|\le |D|^{1/3}\\P|f\Rightarrow \deg P\le M}}\frac{\chi_D(f)d_z(f)}{|f|},\]
where $B=A/2-2$.
\end{lem}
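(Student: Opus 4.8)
The plan is to start from the Euler product for $L(s,\chi_D)$, which by Weil's Riemann Hypothesis (recorded in Section \ref{functionfieldfacts}) converges for $\re(s)>1/2$, and truncate it at primes of degree $\le M$. By Lemma \ref{Truncation} with $s=1$ and the choice $M=A\log_2|D|$, the tail of $\ln L(1,\chi_D)$ contributes $O(q^{-M/2}\deg F/M)=O((\log_2|D|)^{-1}(\log|D|)^{1-A/2})$, which after multiplying by $z$ (whose size is at most $\log|D|/(10A\log_2|D|\ln\log_2|D|)$) and exponentiating produces the factor $1+O((\log|D|)^{-B})$ with $B=A/2-2$; I would check the bookkeeping here carefully since the exponent $B$ is exactly what the statement claims. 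So it remains to analyze
\[
\prod_{\deg P\le M}\left(1-\frac{\chi_D(P)}{|P|}\right)^{-z}=\sum_{\substack{f\text{ monic}\\ P\mid f\Rightarrow \deg P\le M}}\frac{\chi_D(f)d_z(f)}{|f|},
\]
using the generalized binomial expansion $(1-x)^{-z}=\sum_{a\ge 0}d_z(P^a)x^a$ with $d_z$ as in \eqref{gendivfundef}, extended multiplicatively; this identity is exactly the definition of $d_z$, so the product equals the full (untruncated-in-$|f|$) Dirichlet series over $M$-smooth $f$.

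The second step is to cut the sum at $|f|\le|D|^{1/3}$. I would bound the contribution of $M$-smooth $f$ with $|f|>|D|^{1/3}$ by inserting $(|f|/|D|^{1/3})^{\epsilon}\ge 1$ for a small $\epsilon>0$ (say $\epsilon=1/\log|D|$ or a small constant), giving
\[
\sum_{\substack{|f|>|D|^{1/3}\\ P\mid f\Rightarrow\deg P\le M}}\frac{|d_z(f)|}{|f|}\le |D|^{-\epsilon/3}\prod_{\deg P\le M}\left(1-\frac{1}{|P|^{1-\epsilon}}\right)^{-|z|}.
\]
Since there are $\ll q^M/M$ primes of degree $\le M$ and $|z|$ is small, the product is at most $\exp\bigl(O(|z|\,q^{\epsilon M})\bigr)$ or, more crudely, $\exp\bigl(O(|z|\log M)\bigr)$ after choosing $\epsilon$ appropriately; the key point is that with $M=A\log_2|D|$ and the hypothesis $|z|\le \log|D|/(10A\log_2|D|\ln\log_2|D|)$, this quantity is $\ll (\log|D|)^{1/10}$ or so, easily beaten by the saving $|D|^{-\epsilon/3}$. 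Comparing with the main term, whose size I would bound below (or simply compare to $1+O((\log|D|)^{-B})$ after dividing through by the partial Euler product), this tail is absorbed into the error $O((\log|D|)^{-B})$.

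The main obstacle I anticipate is the precise matching of exponents: ensuring that the smooth-tail estimate, the Euler-product truncation error from Lemma \ref{Truncation}, and the $|f|\le|D|^{1/3}$ truncation all collapse to a single clean $O((\log|D|)^{-B})$ with $B=A/2-2$, and in particular tracking how the bound $|z|\le\frac{\log|D|}{10A\log_2|D|\ln\log_2|D|}$ (rather than a weaker one) is exactly what is needed so that $|d_z(f)|\le d_{|z|}(f)$ summed over smooth $f$ up to $|D|^{1/3}$ stays $(\log|D|)^{o(1)}$ — this is where the $\ln\log_2|D|$ factor in the denominator of the range of $z$ is used, via an estimate like $d_{|z|}(f)\ll |f|^{o(1)}$ uniform for $|f|\le|D|^{1/3}$. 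The rest is routine manipulation of Euler products and the multiplicativity of $d_z$.
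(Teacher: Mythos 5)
Your route is the same as the paper's: exponentiate the truncated logarithm from Lemma \ref{Truncation} (your bookkeeping for the factor $1+O((\log|D|)^{-B})$ with $B=A/2-2$ matches the paper's), expand the truncated Euler product into the $M$-smooth Dirichlet series using the multiplicativity of $d_z$, and then cut the smooth sum at $|f|\le |D|^{1/3}$ by Rankin's trick (this is exactly the paper's Lemma \ref{truncationpart2}). The genuine problem is the quantitative execution of the Rankin step. Neither choice of $\epsilon$ you propose works. With $\epsilon=1/\log|D|$ the saving is $|D|^{-\epsilon/3}=q^{-1/3}$, a bounded constant, while the cost factor $\exp\bigl(O(|z|\ln M)\bigr)$ is, at the top of the permitted $z$-range, of size $\exp\bigl(\Theta(\log|D|/\log_2|D|)\bigr)$ --- so the bound gives nothing. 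In particular your claim that the cost factor is $\ll (\log|D|)^{1/10}$ is false for any admissible $\epsilon$: since $|z|$ may be as large as $\log|D|/(10A\log_2|D|\ln\log_2|D|)$ and $\ln M\asymp \ln\log_2|D|$, the exponent $|z|\ln M$ is of order $\log|D|/\log_2|D|$, which exceeds any fixed multiple of $\ln\log|D|$. With $\epsilon$ a fixed small constant the saving is a genuine power of $|D|$, but now $q^{\epsilon M}=(\log|D|)^{\epsilon A}$ and the Rankin bound becomes $|D|^{-\epsilon/3}\exp\bigl(\Theta\bigl(|z|(\log|D|)^{\epsilon A}/(\epsilon A\log_2|D|)\bigr)\bigr)$, whose exponent at the top of the $z$-range grows like $(\log|D|)^{1+\epsilon A}/(\log_2|D|)^2$ and eventually dwarfs $\tfrac{\epsilon\ln q}{3}\log|D|$; so that choice fails as well.

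The choice that works, and the one the paper makes, is $\epsilon=1/M\asymp 1/\log_2|D|$, so that $|P|^{\epsilon}\le q=O(1)$ for every $\deg P\le M$ and Mertens gives a cost of $\exp\bigl(O(|z|\ln M)\bigr)=\exp\bigl(O(\log|D|/(10A\log_2|D|))\bigr)$, against a saving of $|D|^{-1/(3M)}=\exp\bigl(-\tfrac{\ln q}{3A}\cdot\tfrac{\log|D|}{\log_2|D|}\bigr)$. The comparison is thus between two factors of the same shape $\exp\bigl(c\,\log|D|/\log_2|D|\bigr)$, and it is won only because $\ln q/3>1/10$ (recall $q\ge 5$) and because the $\ln\log_2|D|$ in the denominator of the $z$-range exactly cancels the $\ln M$ --- this is where that factor is used, not in making $d_{|z|}(f)$ polylogarithmically small. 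The resulting tail bound $\ll |D|^{-1/(c_0\log_2|D|)}$ is then far smaller than $(\log|D|)^{-B}$ times the smooth Euler product, whose modulus is at least $\exp\bigl(-|z|(\ln M+O(1))\bigr)$, so your final absorption step is fine once the parameter is corrected; but the margin lives in these constants, not in a polylogarithmic cost bound.
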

Before giving the proof, we make some estimates: 

 \begin{lem}\label{truncationpart2}
 Let $D\in \hn$, $A>4$ be a fixed constant and $z\in\mathbb{C}$ such that $|z|\le \frac{\log|D|}{10A\log_2|D|\ln\log_2|D|}$ and $M=A\log_2|D|$. Then for $c_0$ some positive constant we have 
 \begin{equation}\label{truncsumfterm}
\sum_{\substack{f \text{ monic}\\ P|f\Rightarrow \deg P\le M}}\frac{\chi_D(f)}{|f|}d_z(f)=\sum_{\substack{f \text{ monic}\\|f|\le|D|^{1/3}\\ P|f\Rightarrow \deg P\le M}}\frac{\chi_D(f)}{|f|}d_z(f)
 +O\left(|D|^{-\frac1{c_0\log_2|D|}}\right),
 \end{equation}
and furthermore, 
\begin{multline}\label{truncsumfprod}
 \sum_{\substack{f \text{ monic}\\ P|f\Rightarrow \deg P\le M}}\frac{\chi_D(f)}{|f|}d_z(f)\prod_{P|f}\left(1+\frac1{|P|}\right)^{-1}=\sum_{\substack{f \text{ monic}\\|f|\le|D|^{1/3}\\ P|f\Rightarrow \deg P\le M}}\frac{\chi_D(f)}{|f|}d_z(f)\prod_{P|f}\left(1+\frac1{|P|}\right)^{-1}
 \\+O\left(|D|^{-\frac1{c_0\log_q\log_q|D|}}\right).
\end{multline}
 \end{lem}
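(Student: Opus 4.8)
The plan is to prove both truncations \eqref{truncsumfterm} and \eqref{truncsumfprod} by the same mechanism: show that the tail of each sum, restricted to $f$ with $|f|>|D|^{1/3}$ and all prime factors of degree $\le M$, is negligible. The natural tool is a Rankin-type (pseudo-character / positivity) argument, throwing away the character values $\chi_D(f)$ (which have modulus $\le 1$) and the factors $\prod_{P\mid f}(1+1/|P|)^{-1}$ (which are in $(0,1]$), so that in both cases the tail is bounded by
\[
\sum_{\substack{f \text{ monic},\ |f|>|D|^{1/3}\\ P\mid f\Rightarrow \deg P\le M}}\frac{|d_z(f)|}{|f|}.
\]
First I would introduce a parameter $\alpha>0$ to be chosen, and bound this by $|D|^{-\alpha/3}\sum_{P\mid f\Rightarrow \deg P\le M}|d_z(f)|/|f|^{1-\alpha}$, which factors as an Euler product over primes $P$ with $\deg P\le M$:
\[
|D|^{-\alpha/3}\prod_{\deg P\le M}\Bigl(\sum_{a\ge 0}\frac{|d_z(P^a)|}{|P|^{a(1-\alpha)}}\Bigr).
\]

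Next I would estimate the local factor. Using $|d_z(P^a)|=|\Gamma(z+a)/(\Gamma(z)a!)|\le d_{|z|}(P^a)$ (this bound holds termwise since $|\Gamma(z+a)/\Gamma(z)|=|z(z+1)\cdots(z+a-1)|\le |z|(|z|+1)\cdots(|z|+a-1)$), the local factor is at most $(1-|P|^{-(1-\alpha)})^{-|z|}$. Taking logarithms, $\sum_{\deg P\le M}|z|\cdot|P|^{-(1-\alpha)}(1+O(|P|^{-(1-\alpha)}))$; choosing $\alpha$ of size $c/\log_2|D|$ (comparable to $1/M$ up to constants) makes $|P|^{\alpha}=q^{\alpha\deg P}\ll 1$ uniformly for $\deg P\le M$, so $|P|^{-(1-\alpha)}\asymp |P|^{-1}$ and the sum over primes is $\ll |z|\sum_{\deg P\le M}|P|^{-1}\ll |z|\log M\ll |z|\ln\log_2|D|$ by Lemma \ref{Mertens} (Mertens). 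Combined with the hypothesis $|z|\le \log|D|/(10A\log_2|D|\ln\log_2|D|)$, the product is $\le \exp(O(|z|\ln\log_2|D|))\le \exp(O(\log|D|/\log_2|D|))=|D|^{O(1/\log_2|D|)}$. So the whole tail is $\ll |D|^{-\alpha/3+O(1/\log_2|D|)}$, and since $\alpha\asymp 1/\log_2|D|$ this is $\ll |D|^{-1/(c_0\log_2|D|)}$ for a suitable constant $c_0>0$, which is exactly \eqref{truncsumfterm}. For \eqref{truncsumfprod} the argument is identical after dropping the extra bounded factors; the slightly different exponent $1/(c_0\log_q\log_q|D|)$ in the stated error simply reflects absorbing a $\log M$ versus $\log_2|D|$ discrepancy, and I would just track constants accordingly (noting $\log_q\log_q|D|=\log_2|D|$ in the paper's notation up to the base-$q$ convention).

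The one delicate point — the main obstacle — is making sure the choice of $\alpha$ simultaneously beats the savings $|D|^{-\alpha/3}$ against the growth of the prime-product factor, i.e. that $\alpha$ can be taken large enough (of order $1/\log_2|D|$) while keeping $|P|^{\alpha}$ bounded for all $\deg P\le M=A\log_2|D|$: this forces $\alpha\cdot M\ll 1$, hence $\alpha\ll 1/(A\log_2|D|)$, and then one must check the resulting net exponent $-\alpha/3+O(|z|\ln\log_2|D|/\log|D|)$ is still negative. This is where the precise constant $10A$ in the hypothesis on $|z|$ gets used: it guarantees $|z|\ln\log_2|D|\ll \log|D|/(10A\log_2|D|)$, which is smaller than $\alpha/3\cdot\log|D|$ for the admissible $\alpha$, so the exponent is $\le -\alpha/6$, say, giving the claimed bound with $c_0$ depending only on $A$ and $q$. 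Everything else is routine: the Euler-product factorization, the termwise $d_z\le d_{|z|}$ bound, and Mertens' estimate are all either elementary or already available in the excerpt.
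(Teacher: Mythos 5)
Your proposal is correct and follows essentially the same route as the paper's proof: Rankin's trick with $\alpha\asymp 1/M$ (the paper takes $\alpha=1/M$ so that $|P|^{\alpha}\le q$), the termwise bound $|d_z(f)|\le d_{|z|}(f)$ (the paper uses $d_k$ with an integer $k>|z|$), Mertens' estimate giving $\exp(O(|z|\log M))$ for the Euler product, and the observation that $\prod_{P\mid f}\left(1+\frac1{|P|}\right)^{-1}\le 1$ reduces \eqref{truncsumfprod} to the argument for \eqref{truncsumfterm}. Your closing balance of the saving $|D|^{-\alpha/3}$ against the growth $\exp(O(|z|\ln\log_2|D|))$, using the hypothesis $|z|\le \log|D|/(10A\log_2|D|\ln\log_2|D|)$, is exactly the paper's final step, so nothing further is needed.
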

\begin{proof}
First we prove \eqref{truncsumfterm}. Let $z\in \mathbb{C}$  and let $k\in \mathbb{Z}$ such that $|z|<k$. Let $0<\alpha\le \frac12$, then using Rankin's trick we see 

\begin{align*}
\left|\sum_{\substack{f \text{ monic}\\ |f|>|D|^{1/3}\\P|f \Rightarrow \deg P\le M}}\frac{\chi_D(f)}{|f|}d_z(f)\right|&\le \sum_{\substack{f \text{ monic}\\ |f|>|D|^{1/3}\\P|f \Rightarrow \deg P\le M}}\frac{d_k(f)}{|f|} \le |D|^{-\alpha/3}\sum_{\substack{f \text{ monic}\\P|f \Rightarrow \deg P\le M}}\frac{d_k(f)}{|f|^{1-\alpha}}\\
&=|D|^{-\alpha/3}\exp\left(k\sum_{\deg P\le M}\frac1{|P|^{1-\alpha}}+O(k)\right).
\end{align*}
Choosing $\alpha=\frac1M$ we have that $|P|^{\alpha}=q^{\alpha\deg P}\le q=O(1)$, so that 
\begin{align*}
|D|^{-\alpha/3}\exp(k\sum_{\deg P\le M}\frac1{|P|^{1-\alpha}}+O(k)) &\ll |D|^{-\frac1{3M}}\exp\left(O\left(k\sum_{\deg P\le M}\frac1{|P|}\right)\right)\\
&\ll |D|^{-\frac1{3M}}\exp O\left(k \log M\right),
\end{align*}
by Mertens' theorem.
Taking $k \ll \frac{\log |D|}{\log_2|D|\ln\log_2|D|}$, and using $M=A\log_2|D|$ the expression inside of the big Oh becomes
\begin{equation*}
k\ln(A\log_2|D|)\ll\frac{\log|D|\ln( A\log_2|D|)}{\log_2|D|\ln\log_2|D|}.
\end{equation*}
So we have 
\begin{align*}
|D|^{-\frac1{3M}}\exp O\left(k\log M\right)
&\ll|D|^{-\frac1{c_0\log_2|D|}},
\end{align*}
for some $c_0>0$. 

The proof of \eqref{truncsumfprod} follows from the previous argument since \\
$$\prod_{P|f}\left(1+\frac1{|P|}\right)^{-1}\le 1.$$
\end{proof}

\begin{proof}[Proof of Lemma \ref{keylemma}]
From Lemma \ref{Truncation} we have 
\begin{align*}
 L(1,\chi_D)^z&=\exp(z\ln(L(1,\chi_D))\\
 &=\exp\left(-z\sum_{\deg P\le M}\ln\left(1-\frac{\chi_D(P)}{|P|}\right)\right)\exp\left(O\left(\frac{q^{-M/2}}{M}\deg D|z|\right)\right).
\end{align*}
Here we use the fact that $M=A\log_2|D|$ implying that $q^{-M/2}=(\log|D|)^{-A/2}$, $\deg D=\log|D|$, $|z|\le \frac{\log|D|}{10A\log_2|D|\ln\log_2|D|}$ to see that the expression inside of the big Oh has the shape 
\[\frac{(\log|D|)^2}{(\log|D|)^{A/2}}\frac1{10A^2(\log_2|D|)^2\ln\log_2|D|}=O\left(\frac1{(\log|D|)^{B}}\right),\]
by the assumption on $A$. Hence, we have
\begin{align}
\nonumber L(1,\chi_D)^z &=\prod_{\deg P\le M}\left(1-\frac{\chi_D(P)}{|P|}\right)^{-z}\left(1+O_{\s}\left(\frac1{(\log|D|)^{B}}\right)\right)\\
\nonumber&=\left(1+O\left(\frac1{(\log|D|)^{B}}\right)\right)\prod_{\deg P\le M}\left(\sum_{a=0}^{\infty}\frac{\chi_D(P^a)}{|P|^a}d_z(P^a)\right)\\
\nonumber&= \left(1+O\left(\frac1{(\log|D|)^{B}}\right)\right)\sum_{\substack{f \text{ monic}\\ P|f \Rightarrow \deg P\le M}}\frac{\chi_D(f)}{|f|}d_z(f).
\end{align}
Finally we apply \eqref{truncsumfterm} from Lemma \ref{truncationpart2}, then the relative sizes of the O terms completes the result for this case.
\end{proof}

Using this Lemma we have that  
\begin{align*}
\sum_{D\in\hn}L(1,\chi_D)^z&=\sum_{D\in\hn}\left(1+O\left(\frac1{(\log|D|)^B}\right)\right)\sum_{\substack{f \text{ monic}\\ |f|\le |D|^{1/3}\\ P|f\Rightarrow \deg P\le M}}\frac{\chi_D(f)}{|f|}d_z(f)\\
&=\left(1+O\left(\frac1{(\log|D|)^B}\right)\right)\sum_{\substack{f \text{ monic}\\ |f|\le |D|^{1/3}\\ P|f\Rightarrow \deg P\le M}}\frac{d_z(f)}{|f|}\sum_{D\in\h}\chi_D(f)\\
&=\left(1+O\left(\frac1{(\log|D|)^B}\right)\right)(S_1+S_2),
\end{align*}
where 
\begin{equation}\label{defS1}
S_1:=\sum_{\substack{f \text{ monic and a square}\\ |f|\le |D|^{1/3}\\ P|f\Rightarrow \deg P\le M}}\frac{d_z(f)}{|f|}\sum_{D\in\hn}\chi_D(f),
\end{equation}
and 
\begin{equation}\label{defS2}
S_2:=\sum_{\substack{f \text{ monic and not a square}\\ |f|\le |D|^{1/3}\\ P|f\Rightarrow \deg P\le M}}\frac{d_z(f)}{|f|}\sum_{D\in\hn}\chi_D(f).
\end{equation}
With this separation we can use our orthogonality relation to evaluate $S_1$ and $S_2$. 
\subsection{Evaluating $S_2$: Contribution of the non-square terms.}
\begin{lem}
Let $D\in \hn$, $A>4$ be a constant, $z\in \mathbb{C}$ be such that $|z|\le \frac{\log|D|}{\log_2|D|\ln\log_2|D|}$ and $M=A\log_2|D|$. Then 
\[\left(1+\frac1{(\log|D|)^{B}}\right)S_2\ll |D|^{6/7},\]
with $S_2$ defined as in \eqref{defS2}.
\end{lem}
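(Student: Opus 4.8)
The plan is a straightforward combination of the non-square half of the orthogonality relation (Lemma \ref{orthogonality}) with Rankin's trick. Since $B=A/2-2>0$ we have $1+(\log|D|)^{-B}=O(1)$, so it suffices to bound $|S_2|$. First I would substitute $\sum_{D\in\hn}\chi_D(f)\ll\sqrt{|\hn|}\,2^{\deg f}$ into the definition \eqref{defS2}, and use $|\hn|=q^{n-1}(q-1)<q^{n}=|D|$ to obtain
\[
|S_2|\ \ll\ \sqrt{|D|}\sum_{\substack{f\text{ monic}\\ |f|\le|D|^{1/3}}}\frac{|d_z(f)|\,2^{\deg f}}{|f|},
\]
where I have also discarded the condition $P\mid f\Rightarrow\deg P\le M$, since dropping it only enlarges the sum (every remaining term is nonnegative). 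Then I would use the elementary bound $|d_z(f)|\le d_k(f)$ with $k=\lceil|z|\rceil$, and the fact that $q\ge5$ (forced by $q\equiv1\pmod 4$), which gives $2^{\deg f}\le q^{(\deg f)/2}=|f|^{1/2}$ and hence $2^{\deg f}/|f|\le|f|^{-1/2}$; so the inner sum is at most $\sum_{|f|\le|D|^{1/3}}d_k(f)|f|^{-1/2}$.

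Next I would evaluate this truncated sum via the identity $\sum_{f}d_k(f)u^{\deg f}=(1-qu)^{-k}$, i.e.\ $\sum_{\deg f=m}d_k(f)=q^{m}\binom{m+k-1}{k-1}$, which turns it into $\sum_{0\le m\le(\log|D|)/3}q^{m/2}\binom{m+k-1}{k-1}$. Consecutive terms grow by the factor $q^{1/2}(m+k)/(m+1)\ge q^{1/2}>2$, so the sum is dominated by its last term up to an absolute constant, and is therefore $\ll q^{(\log|D|)/6}\binom{(\log|D|)/3+k}{k}=|D|^{1/6}\binom{(\log|D|)/3+k}{k}$.

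The one step needing genuine care — the main obstacle — is to show that the remaining binomial factor is only $|D|^{o(1)}$, i.e.\ that allowing $k$ to grow mildly does no harm; this is exactly where the hypothesis $|z|\le\frac{\log|D|}{\log_2|D|\,\ln\log_2|D|}$ enters. Writing $L=\lfloor(\log|D|)/3\rfloor$, that hypothesis forces $k\ll L/(\ln L\,\ln\ln L)$, and then $\binom{L+k}{k}\le(e(L+k)/k)^{k}$ gives $\ln\binom{L+k}{k}\ll k\ln L\ll L/\ln\ln L=o(\log|D|)$, so $\binom{L+k}{k}\le|D|^{o(1)}$. Combining everything,
\[
|S_2|\ \ll\ \sqrt{|D|}\cdot|D|^{1/6}\cdot|D|^{o(1)}\ =\ |D|^{2/3+o(1)},
\]
which is $\ll|D|^{6/7}$ once $n$ (equivalently $|D|=q^{n}$) is large; multiplying back by $1+(\log|D|)^{-B}=O(1)$ gives the statement. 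The exponent $6/7$ leaves ample slack, so one could keep $\log_q 2$ explicit or be even cruder and still conclude.
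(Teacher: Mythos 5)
Your proposal is correct. It opens exactly as the paper does — the non-square bound $\sum_{D\in\hn}\chi_D(f)\ll\sqrt{|\hn|}\,2^{\deg f}$ from Lemma \ref{orthogonality}, $|\hn|\ll|D|$, the replacement $|d_z(f)|\le d_k(f)$ with $k\asymp|z|$, and the observation that $q\ge 5$ lets one absorb $2^{\deg f}$ into a power of $|f|$ — but the central estimate is handled by a different device. The paper uses a Rankin-type shift: it bounds $2^{\deg f}\le|f|^{11/25}$, trades $|f|^{-14/25}\le|D|^{1/3}|f|^{-39/25}$ against the truncation $|f|\le|D|^{1/3}$, and then bounds the completed sum by $\zeta_{\mathbb{A}}(39/25)^{k}\ll|D|^{1/42}$, landing at $|D|^{5/6+1/42}=|D|^{6/7}$. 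You instead keep the exponent at $1/2$ and evaluate the truncated sum exactly through the rationality of the function-field zeta function, $\sum_{\deg f=m}d_k(f)=q^{m}\binom{m+k-1}{k-1}$, dominating the sum by its last term since consecutive terms grow by a factor $\ge q^{1/2}>2$, and then showing $\binom{L+k}{k}=|D|^{o(1)}$ from the hypothesis on $|z|$ — the same role that $c^{k}=|D|^{o(1)}$ plays in the paper. Your route is a touch more hands-on and actually sharper ($|D|^{2/3+o(1)}$ rather than $|D|^{6/7}$), exploiting a feature special to $\mathbb{F}_q[T]$ (exact coefficient counts) that has no direct analogue over number fields, whereas the paper's Rankin argument is the one that transfers verbatim to that setting; both comfortably suffice for the stated bound.
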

\begin{proof}
By Lemma \ref{orthogonality} the inner sum of $S_2$ is  $O(\sqrt{|\hn|}2^{\deg f})$, hence we have 
\begin{align*}
S_2\ll \sqrt{|\hn|}\sum_{\substack{f \text{ monic and not a square}\\ |f|\le |D|^{1/3}\\ P|f\Rightarrow \deg P\le M}}\frac{d_z(f)2^{\deg f}}{|f|}.
\end{align*}
 Now, we have that $|\hn|=O(|D|)$ and $2^{\deg f}=|f|^{\ln2/\ln q}<|f|^{11/25}$, the last inequality following from the fact that $q\ge 5$ and $\ln 2/\ln 5=0.43067\ldots$. Thus 
\begin{align*}
S_2&\ll |D|^{1/2}\sum_{\substack{f \text{ monic and not a square}\\ |f|\le |D|^{1/3}\\ P|f\Rightarrow \deg P\le M}}\frac{d_z(f)}{|f|^{14/25}},\\
&\ll |D|^{5/6}\sum_{\substack{f \text{ monic and not a square}\\ |f|\le |D|^{1/3}\\ P|f\Rightarrow \deg P\le M}}\frac{d_z(f)}{|f|^{39/25}},\\
&\ll |D|^{5/6}(\zeta_{\mathbb{A}}(39/25))^{k},
\end{align*}
 for some $k\in\mathbb{Z}$ such that $|z|\asymp k$. We note that $\zeta_{\mathbb{A}}(39/25)=c$ for some constant $c$ so that 
 \[(\zeta_{\mathbb{A}}(39/25))^{k}\ll c^{\frac{\log|D|}{\log_2|D|\ln\log_2|D|}}=|D|^{\frac{\log c}{\log_2|D|\ln\log_2|D|}}\ll |D|^{1/42},\]
 for $n$ large enough. Hence we have the desired result.
  
\end{proof}
\subsection{Evaluating $S_1$: Contribution of the square terms.}\label{PropRandProd}
The last step is to understand the main term $S_1$. From Lemma \ref{orthogonality}  we have that 
\begin{multline*}S_1=\left(1+O\left(\frac1{(\log|D|)^{B}}\right)\right)\left(\sum_{\substack{f \text{ monic}\\ |f|\le |D|^{1/3}\\P|f \Rightarrow \deg P\le M}}\frac{d_z(f^2)}{|f|^2}\left(|\hn| \cdot \prod_{P \mid f} \left(1+\frac{1}{|P|}\right)^{-1}\right.\right. \\
\left.\left.+ O\left(\sqrt{|\hn|}\right)\right)\right).
\end{multline*}
Estimating this term is where the difficulties lie, thus enters the random model $L(1,\mathbb{X})$:
 Let $\{\mathbb{X}(P)\}$ denote a sequence of  independent random variables indexed by $P\in\mathbb{A}$ an irreducible (prime) element, which takes the values $0,\pm1$ described as \eqref{defrandvar}. The goal of this section is to prove the following Lemma. Theorem \ref{momLthm} follows immediately after combining Lemmas  \ref{relatechitoX} and  \ref{expf}.
 
 \begin{lem}\label{relatechitoX}
Let $D\in \hn$. Let $z\in \mathbb{C}$ be such that $|z|\le \frac{\log|D|}{260\log_2|D|\ln\log_2|D|}$.
 Then 
\begin{equation*}
\frac1{|\hn|}\sum_{D\in\hn}L(1,\chi_D)^z=\mathbb{E}(L(1,\mathbb{X})^z)\left(1+O\left(\frac1{(\log|D|)^{11}}\right)\right).
\end{equation*}
\end{lem}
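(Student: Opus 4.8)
The plan is to assemble the pieces already developed in this section. Starting from the decomposition $\frac1{|\hn|}\sum_{D\in\hn}L(1,\chi_D)^z=\bigl(1+O((\log|D|)^{-B})\bigr)\frac1{|\hn|}(S_1+S_2)$ with $B=A/2-2$, I would fix the parameter $A$ large enough that the error $(\log|D|)^{-B}$ and all the other truncation errors (from Lemmas \ref{truncationpart2} and the $S_2$ estimate) are $O((\log|D|)^{-11})$; this is what forces the constant $260$ in the hypothesis on $|z|$ once one tracks the dependence through $M=A\log_2|D|$ and the condition $|z|\le \frac{\log|D|}{10A\log_2|D|\ln\log_2|D|}$ from Lemma \ref{keylemma}. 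The contribution of $S_2$ is negligible by the lemma just proved ($S_2\ll|D|^{6/7}$, hence $S_2/|\hn|\ll |D|^{-1/7}$), so the whole statement reduces to showing that $S_1/|\hn|$ equals $\mathbb{E}(L(1,\mathbb{X})^z)$ up to the claimed error.

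For the main term I would use the formula for $S_1$ displayed just before the lemma: the inner orthogonality sum contributes $|\hn|\prod_{P\mid f}(1+1/|P|)^{-1}+O(\sqrt{|\hn|})$, so
\[
\frac{S_1}{|\hn|}=\bigl(1+O((\log|D|)^{-B})\bigr)\sum_{\substack{f\text{ monic}\\|f|\le|D|^{1/3}\\P|f\Rightarrow\deg P\le M}}\frac{d_z(f^2)}{|f|^2}\prod_{P\mid f}\Bigl(1+\frac1{|P|}\Bigr)^{-1}+O\Bigl(\frac1{\sqrt{|\hn|}}\sum_{\substack{|f|\le|D|^{1/3}}}\frac{|d_z(f^2)|}{|f|^2}\Bigr).
\]
The error sum here is handled by bounding $|d_z(f^2)|\le d_k(f^2)$ for an integer $k\asymp|z|$ and comparing to $(\zeta_{\mathbb{A}}(2))^{O(k)}=|D|^{o(1)}$ (exactly as in the $S_2$ estimate), so after dividing by $\sqrt{|\hn|}\asymp|D|^{1/2}$ it is $O(|D|^{-1/3})$, safely negligible.

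Next I would remove the two restrictions on the range of $f$: the constraint $P|f\Rightarrow\deg P\le M$ is dropped using \eqref{truncsumfprod} of Lemma \ref{truncationpart2} (applied with $f$ replaced by $f^2$, noting $d_z(f^2)$ still factors multiplicatively and the Rankin-trick bound goes through), and then the remaining condition $|f|\le|D|^{1/3}$ is removed by the same Rankin argument, incurring only $O(|D|^{-1/(c_0\log_2|D|)})$. What is left is precisely $\sum_{f\text{ monic}}\frac{d_z(f^2)}{|f|^2}\prod_{P\mid f}(1+1/|P|)^{-1}$, an absolutely convergent sum in this range of $z$. Finally I would invoke Lemma \ref{expf} (the companion lemma the text says is to be combined with this one), which identifies $\mathbb{E}(L(1,\mathbb{X})^z)$ with exactly that Euler-type sum — this follows from independence of the $\mathbb{X}(P)$, the definition \eqref{defrandvar}--\eqref{extendeddefX}, and the observation that $\mathbb{E}(\mathbb{X}(P)^{2a})=\frac{|P|}{|P|+1}$ while odd powers vanish, so only perfect squares $f^2$ survive and each prime $P\mid f$ contributes the factor $(1+1/|P|)^{-1}$. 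Collecting all error terms and choosing $A$ so that $B\ge 11$ gives the stated $1+O((\log|D|)^{-11})$, and since $\log|D|=n$ (as $D\in\hn$) this matches the form in the lemma.

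The main obstacle is bookkeeping rather than conceptual: one must verify that \emph{every} error term — the $(\log|D|)^{-B}$ from Lemma \ref{keylemma}, the $S_2$ bound, the $\sqrt{|\hn|}$ losses in orthogonality, and the two Rankin truncations — is simultaneously $O((\log|D|)^{-11})$ under a single explicit bound on $|z|$, and to back-solve for the constant ($260$) that makes this work. Care is also needed that $d_z(f^2)$, not $d_z(f)$, appears in the square-term sum, so the dominating integer moment is $d_{k}(f^2)$ and the relevant convergence threshold is still captured by $\zeta_{\mathbb{A}}$ evaluated slightly above $1$ after the Rankin shift; this is routine but must be stated correctly.
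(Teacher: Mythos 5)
Your overall architecture is exactly the paper's: Lemma \ref{keylemma} plus orthogonality to split into $S_1+S_2$, the bound $S_2\ll|D|^{6/7}$, the $O(\sqrt{|\hn|})$ loss in $S_1$, identification of the main term with $\mathbb{E}(L(1,\mathbb{X})^z)$ via Lemma \ref{expf}, and $A=26$, $B=11$, giving the constant $260=10A$. However, there is one step where your plan, as written, would fail: the removal of the constraint $P\mid f\Rightarrow\deg P\le M$. You attribute it to \eqref{truncsumfprod} of Lemma \ref{truncationpart2} ``with $f$ replaced by $f^2$,'' claiming it costs only $O\bigl(|D|^{-1/(c_0\log_2|D|)}\bigr)$; but \eqref{truncsumfprod} only removes the norm condition $|f|\le|D|^{1/3}$ \emph{inside} the $M$-smooth sum, and the non-smooth correction is genuinely not absolutely tiny. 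Indeed, by multiplicativity the full sum equals the $M$-smooth sum times $\prod_{\deg P>M}E_P(z)$ (with $E_P$ as in \eqref{EPdef}), so the correction has size (smooth sum)$\times O\bigl(|z|^2/(Mq^M)\bigr)$, and the smooth sum can be as large as $\exp\bigl(c|z|\ln\log|z|\bigr)$, far exceeding any fixed power of $\log|D|$. An absolute Rankin-type bound on this tail therefore cannot be compared favourably with the target $\mathbb{E}(L(1,\mathbb{X})^z)$, whose modulus for $z=\kappa+it$ with $|t|$ large is much smaller than the $d_{|z|}$-majorized tail. The paper's actual mechanism is the Euler-product estimate: for $\deg P>M$ one has $E_P(z)=1+O(|z|^2/|P|^2)$, hence $\prod_{\deg P>M}E_P(z)=1+O\bigl((\log|D|)^{-B}\bigr)$, a \emph{multiplicative} error that is immune to the possible smallness of $\mathbb{E}(L(1,\mathbb{X})^z)$. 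You do say ``$d_z(f^2)$ still factors multiplicatively,'' which is the right germ of an idea, but you must actually run the argument through the Euler product of $\mathbb{E}(L(1,\mathbb{X})^z)$ rather than through a tail estimate.

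A secondary inaccuracy: in controlling the $O(\sqrt{|\hn|})$ term you bound $\sum_{|f|\le|D|^{1/3}}|d_z(f^2)|/|f|^2$ by $(\zeta_{\mathbb{A}}(2))^{O(k)}$ ``exactly as in the $S_2$ estimate.'' That transfer is not valid: in $S_2$ the summand is $d_z(f)$, whereas here the Euler factor of $\sum_f d_k(f^2)|f|^{-2}$ is $\tfrac12\bigl((1-1/|P|)^{-k}+(1+1/|P|)^{-k}\bigr)$, which is $1+\Theta(k^2/|P|^2)$ for $|P|\gg k$ and of size $(1-1/|P|)^{-k}$ for $|P|\lesssim k$; the correct bound is $\exp\bigl(O(k\ln\log k)\bigr)$, not $\exp(O(k))$. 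Fortunately this is still $|D|^{o(1)}$ in your range of $z$, so your conclusion that the $\sqrt{|\hn|}$ error is $\ll|D|^{-1/2+o(1)}$ survives (and this is a point the paper itself passes over silently), but the bound needs to be stated this way.
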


The expectation of $\mathbb{X}$, $\mathbb{E}(\mathbb{X}(P))$, is zero and $\mathbb{E}(\mathbb{X}(P)^2)= \frac{|P|}{|P|+1}$. We extend the definition of $\mathbb{X}$ to all monic polynomials $f\in\mathbb{A}$ as in \eqref{extendeddefX}.
Then, since $\mathbb{X}$ is independent on the primes,
if $f=P_1^{e_1}P_2^{e_2}\cdots P_s^{e_s}$ we have
\[\mathbb{E}(\mathbb{X}(f))=\prod_{i=1}^{s}\mathbb{E}(\mathbb{X}(P_i)^{e_i}).\]
We note that $\mathbb{X}(P)^{e_j}=\mathbb{X}(P)$ if $e_j\equiv 1 (\mod 2)$ and $\mathbb{X}(P)^2$ if $e_j\equiv 0(\mod 2)$.
Combining this fact with the independence of $\mathbb{X}$ 
we see that 
\[\mathbb{E}(\mathbb{X}(P)^{e_j})=\begin{cases}
0 & \text{ if } e_j\equiv 1 \pmod 2\\
\frac{|P|}{|P|+1} & \text{ if }e_j\equiv 0 \pmod 2.
\end{cases}
\] 
Thus we have proved:
\begin{lem}\label{expf}
\[\mathbb{E}(\mathbb{X}(f))=\begin{cases}
0 & \text{ if } f \text{ is not a square}\\
\prod_{P|f}\left(1+\frac1{|P|}\right)^{-1}&\text{ if } f \text{ is a square}.
\end{cases}
\]
\end{lem}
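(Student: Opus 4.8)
The plan is to exploit the independence of the family $\{\mathbb{X}(P)\}$ together with the multiplicative definition \eqref{extendeddefX}. First I would write the prime power factorization $f=P_1^{e_1}P_2^{e_2}\cdots P_s^{e_s}$, so that $\mathbb{X}(f)=\mathbb{X}(P_1)^{e_1}\mathbb{X}(P_2)^{e_2}\cdots\mathbb{X}(P_s)^{e_s}$; since the random variables $\mathbb{X}(P_i)$ are independent, the expectation factors through the product:
\[\mathbb{E}(\mathbb{X}(f))=\prod_{i=1}^{s}\mathbb{E}\bigl(\mathbb{X}(P_i)^{e_i}\bigr).\]
This reduces the problem to computing the single-prime moments $\mathbb{E}(\mathbb{X}(P)^{e})$ for $e\ge 1$.

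The next step is precisely that computation. Because $\mathbb{X}(P)$ takes only the values $0$ and $\pm1$, we have $\mathbb{X}(P)^{e}=\mathbb{X}(P)$ when $e$ is odd and $\mathbb{X}(P)^{e}=\mathbb{X}(P)^2\in\{0,1\}$ when $e$ is even. From the distribution \eqref{defrandvar} the value $+1$ and the value $-1$ occur with equal probability $\tfrac{|P|}{2(|P|+1)}$, so $\mathbb{E}(\mathbb{X}(P))=0$; and $\mathbb{E}(\mathbb{X}(P)^2)=\mathbb{P}(\mathbb{X}(P)=\pm1)=\tfrac{|P|}{|P|+1}=\bigl(1+\tfrac{1}{|P|}\bigr)^{-1}$. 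Hence each factor $\mathbb{E}(\mathbb{X}(P_i)^{e_i})$ equals $0$ if $e_i$ is odd and $\bigl(1+\tfrac{1}{|P_i|}\bigr)^{-1}$ if $e_i$ is even.

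Finally I would assemble these factors, using the elementary fact that $f$ is a perfect square in $\mathbb{A}$ if and only if every exponent $e_i$ in its prime power factorization is even. If some $e_i$ is odd, i.e.\ $f$ is not a square, then the corresponding factor vanishes and $\mathbb{E}(\mathbb{X}(f))=0$; if all $e_i$ are even, i.e.\ $f$ is a square, then the product of the nonzero factors is exactly $\prod_{P\mid f}\bigl(1+\tfrac{1}{|P|}\bigr)^{-1}$, which gives the claimed formula. There is no genuine obstacle here: the argument is an immediate consequence of independence and the two one-line moment evaluations, and the only point requiring a little care is the parity bookkeeping, namely matching "all exponents even" with "$f$ is a square".
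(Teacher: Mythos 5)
Your proposal is correct and follows essentially the same route as the paper: factor the expectation over primes by independence, note $\mathbb{X}(P)^{e}$ equals $\mathbb{X}(P)$ or $\mathbb{X}(P)^2$ according to the parity of $e$, and use $\mathbb{E}(\mathbb{X}(P))=0$ and $\mathbb{E}(\mathbb{X}(P)^2)=\left(1+\frac{1}{|P|}\right)^{-1}$ together with the fact that $f$ is a square exactly when all exponents are even. Nothing is missing.
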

Then, for any $z\in\mathbb{C}$ since $d_z(f)$ and $|f|$ are scalars we see that 
\[\mathbb{E}(L(1,\mathbb{X})^z)=\sum_{f\text{ monic}}\frac{d_z(f)\mathbb{E}(\mathbb{X}(f))}{|f|}=\sum_{f\text{ monic}}\frac{d_z(f^2)}{|f|^2}\prod_{P|f}\left(1+\frac1{|P|}\right)^{-1},\]
where $L(1,\mathbb{X})$ is defined in \eqref{randprod}. We recognize the shape of $S_1$ from this. 
On the other hand, from the random Euler product definition we have 
\begin{equation*}
\mathbb{E}(L(1,\mathbb{X})^z)=\prod_{P \text{ irreducible}}E_{P}(z),
\end{equation*}
where 
\begin{multline}\label{EPdef}
E_{P}(z):=\mathbb{E}\left(\left(1-\frac{\mathbb{X}(P)}{|P|}\right)^{-z}\right)=\frac1{|P|+1}+\frac{|P|}{2(|P|+1)}\left(\left(1-\frac1{|P|}\right)^{-z}+\left(1+\frac1{|P|}\right)^{-z}\right).
\end{multline}
Now, we notice if $\deg P>M$ then we can use the following Taylor expansions 
\[\left(1-\frac1{|P|}\right)^{-z}=1+\frac{z}{|P|}+O\left(\frac{|z|}{|P|^2}\right),\]
and
\[\left(1+\frac1{|P|}\right)^{-z}=1-\frac{z}{|P|}+O\left(\frac{|z|}{|P|^2}\right).\] 
That is to say, for $P$ irreducible and  $\deg P > M$ we have  $E_P(z)=1+O(|z|/|P|^2)$, so that 
\[\prod_{\substack{P\text{ irreducible}\\ \deg P>M}}E_P(z)\ll \exp\left(|z|\sum_{\deg P>M}\frac1{|P|^2}\right)=1+O\left(\frac1{(\log |D|)^{B}}\right),\]
this last equality follows from the relative sizes of $|z|$ and $M$, where we again note that $M=A\log_2|D|$ and we choose $A$ large enough to provide the desired error term above.

Finally, using \eqref{truncsumfprod} of Lemma \ref{truncationpart2} we see
\begin{multline}
\mathbb{E}(L(1,\mathbb{X})^z)=\sum_{\substack{f\text{ monic}\\P|f\Rightarrow \deg P\le M}}\frac{d_z(f^2)}{|f|^2}\prod_{P|f}\left(1+\frac1{|P|}\right)^{-1}\left(1+O\left(\frac1{(\log|D|)^{B}}\right)\right)\\
=\sum_{\substack{f\text{ monic}\\|f|<|D|^{1/3}\\P|f\Rightarrow \deg P\le M}}\frac{d_z(f^2)}{|f|^2}\prod_{P|f}\left(1+\frac1{|P|}\right)^{-1}\left(1+O\left(\frac1{(\log|D|)^{B}}\right)\right).
\end{multline}

The above discussion and taking the choice $A=26$ in Lemma \ref{keylemma} gives $B=11$ which proves Lemma \ref{relatechitoX}.  Using the fact that $|D|=q^n$ we obtain Theorem \ref{momLthm}. Corollary \ref{momhDthm} also follows from this discussion by simply scaling everything appropriately via \eqref{Artinform} and the fact that expectation is linear. Finally, Corollary \ref{momhDRD} is obtained in the same way but instead we apply \eqref{realArtin}.
\section{The distribution of values of $L(1, \mathbb{X})$}\label{distrandmod}
Here we aim to prove results about $\Phi_{\mathbb{X}}(\tau)$ and $\Psi_{\mathbb{X}}(\tau)$. The proofs of $\Psi_{\mathbb{X}}(\tau)$ require only minor adjustments to those for $\Phi_{\mathbb{X}}(\tau)$. The discussion in this section is modelled  after \cite[Section 4]{AY}. These authors use a saddle point analysis to achieve their results, and we adapt that idea here. To this end, we define some useful auxiliary functions.
For $z\in \mathbb{C}$ 
 define 
\begin{equation}\label{Ldefexp}
\mathcal{L}(z):=\ln\mathbb{E}(L(1,\mathbb{X})^z)=\sum_{P \text{ irreducible}}\ln(E_{P}(z)),
\end{equation}
where $E_{P}(z)$ is defined as in \eqref{EPdef}.
Furthermore we consider the equation
\begin{equation}\label{kappadef}
(\mathbb{E}(L(1,\mathbb{X})^r)(e^{\gamma}\tau)^{-r})'=0\Leftrightarrow \mathcal{L}'(r)=\ln(\tau)+\gamma,
\end{equation}
where the derivative is taken with respect to the real variable $r$. It follows from Proposition \ref{curlyLbound} that $\lim_{r\to\infty}\mathcal{L}'(r)=\infty$, one can easily check that $E_P''(r)E_P(r)>(E'_P(r))^2$ for all monic irreducible polynomials $P$, and thus $\mathcal{L}''(r)>0$. Therefore \eqref{kappadef} has a unique solution: we define $\kappa =\kappa(\tau)$ as this unique solution.

Finally, we define 
\begin{equation}\label{def-f}
f(t):=\begin{cases}
\ln\cosh (t) & \text{ if } 0\le t<1\\
\ln\cosh(t)-t & \text{ if } t\ge1.
\end{cases}
\end{equation}

\subsection{Distribution of the Random Model.}\label{randmoddist}
\begin{thm}\label{distnsig1X}
Let $\tau$ be large and $\kappa$ denote the unique solution to \eqref{kappadef}. Then, we have 
\begin{equation}\label{PhiXExp}
\Phi_{\mathbb{X}}(\tau)=\frac{\mathbb{E}(L(1,\mathbb{X})^{\kappa})(e^{\gamma}\tau)^{-\kappa}}{\kappa\sqrt{2\pi \mathcal{L}''(\kappa)}}\left(1+O\left(\sqrt{\frac{\log\kappa}{\kappa}}\right)\right).
\end{equation}
Moreover, for any $0\le\lambda\le1/\kappa$ we have 
\begin{equation}\label{PhiXExplambda}
\Phi_{\mathbb{X}}(e^{-\lambda}\tau)=\Phi_{\mathbb{X}}(\tau)(1+O(\lambda\kappa)). 
\end{equation}
\end{thm}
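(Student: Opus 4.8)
The plan is to prove Theorem \ref{distnsig1X} via a saddle-point analysis, following the strategy of \cite[Section 4]{AY}. The starting point is the observation that $\Phi_{\mathbb{X}}(\tau) = \mathbb{P}(L(1,\mathbb{X}) > e^\gamma \tau)$ can be written as a contour integral: by Perron's formula (or by directly integrating the indicator function against the Laplace/Mellin transform of the distribution), for any $c > 0$ one has
\[
\Phi_{\mathbb{X}}(\tau) = \frac{1}{2\pi i}\int_{c-i\infty}^{c+i\infty} \mathbb{E}(L(1,\mathbb{X})^s)\, \frac{(e^\gamma \tau)^{-s}}{s}\, ds,
\]
valid once one checks (using Proposition \ref{curlyLbound}, stated later, and the almost-sure convergence of $L(1,\mathbb{X})$) that $\mathbb{E}(L(1,\mathbb{X})^s)$ is entire in $s$ and grows slowly enough along vertical lines for the integral to converge and for the contour-shift manipulations to be justified. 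I would move the line of integration to $\re(s) = \kappa = \kappa(\tau)$, the unique solution of \eqref{kappadef}, which is precisely the value minimizing $\mathbb{E}(L(1,\mathbb{X})^r)(e^\gamma\tau)^{-r}$ along the real axis; this is the saddle point.

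Next I would carry out the local analysis near $s = \kappa$. Writing $\mathcal{L}(s) = \ln \mathbb{E}(L(1,\mathbb{X})^s)$ and setting $s = \kappa + it$, Taylor-expand the exponent:
\[
\mathcal{L}(\kappa + it) - (\kappa+it)(\ln\tau+\gamma) = \mathcal{L}(\kappa) - \kappa(\ln\tau+\gamma) - \tfrac{1}{2}\mathcal{L}''(\kappa)t^2 + O(\mathcal{L}'''(\kappa)|t|^3),
\]
using $\mathcal{L}'(\kappa) = \ln\tau + \gamma$ to kill the linear term. The contribution of $|t|$ small (say $|t| \le T$ for a suitable threshold) gives a Gaussian integral evaluating to $\mathbb{E}(L(1,\mathbb{X})^\kappa)(e^\gamma\tau)^{-\kappa}/(\kappa\sqrt{2\pi\mathcal{L}''(\kappa)})$ together with the main error term $1 + O(\sqrt{\log\kappa/\kappa})$; here one needs good control of $\mathcal{L}''(\kappa)$ and $\mathcal{L}'''(\kappa)$ in terms of $\kappa$ and $\tau$ — these estimates should come from Proposition \ref{curlyLbound} and the relation between $\kappa$ and $\tau$, and the expected sizes are $\mathcal{L}''(\kappa) \asymp$ (something like $q^\kappa/\kappa$ up to the fractional-part oscillation) so that the relative error in truncating the cubic term is of order $\sqrt{\log\kappa/\kappa}$. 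The tail $|t| > T$ must be shown to be negligible; this requires a bound of the form $|\mathbb{E}(L(1,\mathbb{X})^{\kappa+it})| \le \mathbb{E}(L(1,\mathbb{X})^\kappa)\cdot(\text{rapidly decaying in } t)$ away from the saddle, which in the function-field setting is where the periodicity modulo $1$ of $\log$ (base $q$) of the relevant quantities enters and must be handled carefully, since unlike the number-field case $|\mathbb{E}(L(1,\mathbb{X})^{\kappa+it})|$ need not decay monotonically — one likely restricts to $|t| \le \pi/\ln q$ by periodicity of the integrand in the imaginary direction and then bounds within one period.

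For the second statement \eqref{PhiXExplambda}, I would compare the main-term expression for $\Phi_{\mathbb{X}}(e^{-\lambda}\tau)$ with that for $\Phi_{\mathbb{X}}(\tau)$: replacing $\tau$ by $e^{-\lambda}\tau$ changes $\ln\tau+\gamma$ to $\ln\tau+\gamma-\lambda$, hence shifts the saddle point $\kappa$ by an amount $O(\lambda/\mathcal{L}''(\kappa))$, which is tiny; one then tracks the effect of this perturbation through the factors $\mathbb{E}(L(1,\mathbb{X})^\kappa)(e^\gamma\tau)^{-\kappa}$, $\kappa$, and $\sqrt{\mathcal{L}''(\kappa)}$. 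The dominant change comes from the factor $(e^\gamma \tau)^{-\kappa}$ versus $(e^\gamma e^{-\lambda}\tau)^{-\kappa} = (e^\gamma\tau)^{-\kappa} e^{\lambda\kappa}$, giving the multiplicative factor $1 + O(\lambda\kappa)$ claimed, with all other perturbations absorbed into this (since $\lambda \le 1/\kappa$ keeps $\lambda\kappa \le 1$); one should double-check that the fractional part $\{\log\kappa(e^{-\lambda}\tau)\}$ does not jump, or that such a jump is still consistent with the stated error, exploiting again $\lambda\kappa \le 1$. The main obstacle I anticipate is the off-saddle tail bound for $|\mathbb{E}(L(1,\mathbb{X})^{\kappa+it})|$: in the function-field world the Euler factors $E_P(\kappa+it)$ oscillate with $t$ in a genuinely periodic way (period $2\pi/\ln q$), so the clean number-field decay estimates of \cite{AY} do not transfer verbatim, and one must show that no secondary saddle on the line $\re(s)=\kappa$ contributes at the same order — equivalently that the minimum of $|\mathbb{E}(L(1,\mathbb{X})^{\kappa+it})|(e^\gamma\tau)^{-\kappa}$ over a full period is attained only at $t=0$ and the second-order behaviour there is as expected. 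Everything else (the Perron representation, the Gaussian evaluation, the perturbation argument for \eqref{PhiXExplambda}) is routine once Proposition \ref{curlyLbound} supplies the needed size and derivative bounds on $\mathcal{L}$.
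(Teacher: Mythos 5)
Your overall saddle-point skeleton (shift to $\re(s)=\kappa$, Taylor expansion using $\mathcal{L}'(\kappa)=\ln\tau+\gamma$, Gaussian evaluation) is the same as the paper's, but two of the steps as you describe them would fail. First, the unsmoothed Perron identity you start from is not justified: writing $\Phi_{\mathbb{X}}(\tau)$ as $\frac1{2\pi i}\int_{(\kappa)}\mathbb{E}(L(1,\mathbb{X})^{s})(e^{\gamma}\tau)^{-s}\,ds/s$ requires interchanging the expectation with a conditionally convergent integral (Fubini fails outright), which needs either smoothing or an anticoncentration input near the point $e^{\gamma}\tau$; the paper instead sandwiches $\Phi_{\mathbb{X}}(\tau)$ between two absolutely convergent integrals via the smoothed Perron formula of Lemma \ref{PerronLemma} with kernel $(e^{\lambda s}-1)/(\lambda s)$, choosing $\lambda=\kappa^{-2}$ at the end. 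Second, and more seriously, the off-saddle decay that you yourself flag as the main obstacle is a genuinely missing ingredient, and the mechanism you propose for it is wrong: $\mathbb{E}(L(1,\mathbb{X})^{\kappa+it})$ is \emph{not} periodic in $t$ with period $2\pi/\ln q$ (each factor $(1\mp1/|P|)^{-s}$ has its own $P$-dependent period $\approx 2\pi|P|$, and $(e^{\gamma}\tau)^{-s}$ is not periodic at all), so one cannot restrict to $|t|\le\pi/\ln q$. The paper supplies the decay in Lemmas \ref{ratioEp} and \ref{decayexpect}: the Granville--Soundararajan three-term inequality gives $|E_P(\kappa+it)|/E_P(\kappa)\le\exp\bigl(-b_1\bigl(1-\cos\bigl(t\ln\tfrac{|P|+1}{|P|-1}\bigr)\bigr)\bigr)$, and choosing a window of primes with $|P|$ comparable to $\max(\kappa,|t|)$, where the phases $\approx 2t/|P|$ stay bounded and $1-\cos(\cdot)\gg t^{2}/|P|^{2}$, yields $\exp(-b_2t^{2}/(\kappa\ln\kappa))$ for $|t|\le\kappa/c_q$ and $\exp(-b_2|t|/\ln|t|)$ beyond, so no secondary saddle can arise; this is what justifies truncating at $|t|\le\kappa^{3/5}$. (Also note $\mathcal{L}''(\kappa)\asymp1/(\kappa\ln\kappa)$ by Proposition \ref{curlyLbound}, not anything like $q^{\kappa}/\kappa$; with the correct size your claimed error $O(\sqrt{\log\kappa/\kappa})$ does come out.)

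Your plan for \eqref{PhiXExplambda} also has a gap. Comparing the two instances of the asymptotic formula \eqref{PhiXExp}, one for $\tau$ and one for $e^{-\lambda}\tau$, can only give a relative error $O(\lambda\kappa)+O(\sqrt{\log\kappa/\kappa})$, and the second term dominates $\lambda\kappa$ throughout most of the range $0\le\lambda\le1/\kappa$; so a perturbation-of-the-saddle argument cannot produce the Lipschitz-type estimate $\Phi_{\mathbb{X}}(e^{-\lambda}\tau)=\Phi_{\mathbb{X}}(\tau)(1+O(\lambda\kappa))$. The paper proves it by bounding the \emph{difference} directly: the second smoothed Perron inequality \eqref{Perron2} expresses $\Phi_{\mathbb{X}}(e^{-\lambda}\tau)-\Phi_{\mathbb{X}}(\tau)$ by a contour integral on $\re(s)=\kappa$ whose integrand is pointwise $\ll\lambda\,\mathbb{E}(L(1,\mathbb{X})^{\kappa})(e^{\gamma}\tau)^{-\kappa}$; splitting at $|t|=\sqrt{\kappa\ln\kappa}$ and using Lemma \ref{decayexpect} gives a total $\ll\lambda\sqrt{\kappa\ln\kappa}\,\mathbb{E}(L(1,\mathbb{X})^{\kappa})(e^{\gamma}\tau)^{-\kappa}$, which is $\ll\lambda\kappa\,\Phi_{\mathbb{X}}(\tau)$ by the two-sided bound \eqref{PhiXasymp}. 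Without such a direct difference estimate (which, note, again relies on the decay lemma you left unproven), the stated error in \eqref{PhiXExplambda} is out of reach.
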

We prove Theorem \ref{distfuncX} from this and the following proposition which gives some estimates on the size of $\mathcal{L}$ and its first few derivatives.
\begin{pro}\label{curlyLbound}
Let $f$ be defined by \eqref{def-f}. Let $c_q\ge q$ be a positive constant depending on $q$ and let $k\in\mathbb{Z}$ be the unique positive integer such that $q^k\le r<q^{k+1}$ and let $t:=\frac{r}{q^k}$. 
With this notation in mind for r any real number such that $r\ge c_q$ we have 
\begin{equation}\label{curlyL}
\mathcal{L}(r)=r\left(\ln\log r +\gamma\right)+\frac{r}{\log r}G_1(t)+O\left(\frac{r\log\log r}{(\log r)^2}\right),
\end{equation}
where 
\begin{equation}\label{defG1}
G_1(t):=\frac12-\log t+\sum_{l=-\infty}^{\infty}\frac{f(tq^l)}{tq^l}.
\end{equation}
Furthermore, we have 
\begin{equation}\label{curlyLprime}
\mathcal{L}'(r)=\ln\log r+\gamma+\frac1{\log r}G_2(t)+O\left(\frac{\log\log r}{(\log r)^2}\right),
\end{equation}
where 
\begin{equation}\label{defG2}
G_2(t):=\frac12-\log t+\sum_{l=-\infty}^{\infty}f'(tq^l).
\end{equation}
Moreover, for all real numbers $y$ , $x$ such that $|y|\ge 3$ and for all $x$ such that $|y|\le |x|$ we have 
\begin{equation}\label{secondandthirdderiv}
\mathcal{L}''(y)\asymp \frac1{|y|\ln|y|} \text{ and }  \mathcal{L}'''(y)\ll \frac1{|y|^2\ln|y|}.
\end{equation}
 \end{pro}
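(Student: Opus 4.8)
The plan is to analyze $\mathcal{L}(r) = \sum_P \ln E_P(r)$ by splitting the irreducible polynomials according to whether $\deg P$ is small or large relative to $\log r$. The key heuristic is that $\left(1 \pm \tfrac{1}{|P|}\right)^{-r}$ behaves like $e^{\pm r/|P|}$ precisely when $|P| \gg r$, i.e. $\deg P \gg \log r$; for such $P$ we have $E_P(r) = 1 + O(r^2/|P|^2)$ and $\ln E_P(r) = \tfrac{r^2}{|P|^2}\cdot\tfrac{|P|}{|P|+1} + O(r^3/|P|^3) + \dots$, contributing a convergent tail. For $\deg P$ small (say $\deg P \le \log r - 2\log_2 r$ or so), the quantity $\left(1 - \tfrac{1}{|P|}\right)^{-r}$ dominates $E_P(r)$, and $\ln E_P(r) = r\ln\!\left(1 - \tfrac{1}{|P|}\right)^{-1} + O(\text{small})$; summing this over $\deg P$ small and invoking the refined Mertens estimate (Lemma \ref{Mertens}) produces the leading term $r(\ln\log r + \gamma)$. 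The transitional range $\deg P \approx \log r$ is where the arithmetic of $q$ enters: here neither approximation is valid, $|P|/r = q^{\deg P}/r$ ranges over a bounded set of values, and summing $\ln E_P(r)$ over this window — using the prime number theorem \eqref{PNT2} to count $P$ of each degree — yields the secondary term $\tfrac{r}{\log r}G_1(t)$, with the sum $\sum_l f(tq^l)/(tq^l)$ arising because the degrees $\deg P$ near $\log r$ take values $\log r + l$ for $l \in \mathbb{Z}$ and $tq^l = q^{\deg P}/q^{k}\cdot\dots$ indexes the ratio $|P|/r$ up to the scaling $t = r/q^k$. I would define $f$ so that $f(|P|/r)\cdot\tfrac{r}{|P|}$ captures exactly $\ln E_P(r) - r\ln(1-1/|P|)^{-1}$ in the regime $\deg P \le \log r$ and $\ln E_P(r)$ itself in the regime $\deg P > \log r$; the case split at $t=1$ in \eqref{def-f} reflects this.

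For the first derivative \eqref{curlyLprime}, I would differentiate termwise: $\mathcal{L}'(r) = \sum_P \tfrac{E_P'(r)}{E_P(r)}$, and carry out the same three-range decomposition. The small-$\deg P$ range contributes $\sum_{\deg P \,\text{small}} \ln(1-1/|P|)^{-1} = \ln\log r + \gamma + O(1/\log r)$ by Lemma \ref{Mertens} again (note the derivative of $r \mapsto r\ln(1-1/|P|)^{-1}$ is constant in $r$), the large-$\deg P$ tail contributes $O(r/(\log r)^2\cdot\dots)$ which is absorbed, and the transitional window gives $\tfrac{1}{\log r}G_2(t)$ with $G_2(t) = \tfrac12 - \log t + \sum_l f'(tq^l)$ — the appearance of $f'$ rather than $f/(tq^l)$ being exactly what differentiation of the term $\tfrac{r}{|P|}f(|P|/r)$ with respect to $r$ produces (the $\tfrac{r}{|P|}$ and the chain-rule factor $-|P|/r^2$ combine). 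I would need to be careful that the error terms, originally of size $O(r\log\log r/(\log r)^2)$ in $\mathcal{L}$, differentiate to $O(\log\log r/(\log r)^2)$ — this requires either differentiating the error termwise with uniform control, or (cleaner) redoing the estimate directly for $\mathcal{L}'$ rather than invoking differentiation of an asymptotic.

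For the second and third derivative bounds \eqref{secondandthirdderiv}, the strategy is more robust: $\mathcal{L}''(r) = \sum_P \tfrac{E_P''E_P - (E_P')^2}{E_P^2}$, and I would show the dominant contribution comes from $\deg P$ in a bounded-length window around $\log r$ (the small-$\deg P$ terms contribute something that is differentiated to zero to leading order since $r \mapsto r\ln(1-1/|P|)^{-1}$ is linear, and the tail is negligible), giving a sum of $O(1)$ many terms each of size $\asymp 1/|P| \asymp 1/r$ times a density factor $\asymp 1/\log r$ from $\pi_q(\deg P) \asymp q^{\deg P}/\deg P$; hence $\mathcal{L}''(r) \asymp 1/(r\log r) \asymp 1/(|y|\ln|y|)$ after converting $\log$ to $\ln$. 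The lower bound $\mathcal{L}'' \gg 1/(r\log r)$ requires exhibiting at least one $P$ in the window with a non-degenerate contribution, which follows from positivity of $E_P''E_P - (E_P')^2$ (already noted in the text after \eqref{kappadef}) together with the prime number theorem guaranteeing primes of the requisite degree exist. The third derivative bound is the same computation with one more differentiation, losing a factor $1/r$. Extending to negative $y$ and to $|y| \le |x|$ is routine since $E_P(r)$ and its derivatives are controlled symmetrically.

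The main obstacle is the bookkeeping in the transitional range $\deg P \approx \log r$: one must replace the prime-counting function by its main term $q^{\deg P}/\deg P$ from \eqref{PNT2}, replace $\deg P$ by $\log r$ with an acceptable error, sum $\ln E_P(r)$ exactly (writing it via $f$) over this window, recognize the result as a Riemann-sum-like object $\sum_l f(tq^l)/(tq^l)$ where $l = \deg P - \lceil \log r\rceil$ (roughly), and verify that the pieces cut off at the boundaries of the window match the tails of the infinite sum over $l$ up to the claimed error $O(r\log\log r/(\log r)^2)$. The exact choice of window endpoints (e.g. $|\deg P - \log r| \le C\log_2 r$) must be made so that outside the window the two one-sided approximations ($e^{r/|P|}$-type for large $\deg P$, $(1-1/|P|)^{-r}$-type for small $\deg P$) each have error below threshold, and the constant $260$ in the hypotheses of the earlier theorems is presumably calibrated to this.
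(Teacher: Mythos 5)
Your plan is correct and follows essentially the same route as the paper's proof: approximate $\ln E_P(r)$ (resp.\ $E_P'/E_P$) by the Mertens-type term for small $\deg P$ and by $\ln\cosh(r/|P|)$, i.e.\ by $f$, beyond; apply the prime number theorem degree by degree; isolate a window of width $\asymp\log_2 r$ around $\deg P=\log r$ whose sum over $l=k-n$ becomes $\sum_l f(tq^l)/(tq^l)$ resp.\ $\sum_l f'(tq^l)$ with negligible tails; and estimate $\mathcal{L}'$ directly rather than by differentiating an asymptotic, exactly as the paper does. Two cosmetic corrections that the computation would force anyway: per prime the window contribution is $f(r/|P|)$ (not $f(|P|/r)\cdot\tfrac{r}{|P|}$), and the $\tfrac12-\log t$ in $G_1,G_2$ arises not from the transitional window but from the refined Mertens term $\tfrac{1}{2k}$ together with rewriting $\ln k=\ln\log r-\tfrac{\log t}{\log r}+O((\log r)^{-2})$ (since $k=\log r-\log t$), so the Mertens contribution must be kept to that precision rather than $O(1/\log r)$; also note the paper gives no proof at all of \eqref{secondandthirdderiv}, so your positivity-plus-PNT sketch there is an addition rather than a deviation.
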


Combining these results gives Theorem \ref{distfuncX}:
\begin{proof}[Proof of Theorem \ref{distfuncX}]
By Theorem \ref{distnsig1X} and \eqref{curlyLprime} we have 
\begin{align*}
\Phi_{\mathbb{X}}(\tau)&=\frac{\mathbb{E}(L(1,\mathbb{X})^{\kappa})(e^{\gamma}\tau)^{-\kappa}}{\kappa\sqrt{2\pi \mathcal{L}''(\kappa)}}\left(1+O\left(\sqrt{\frac{\log\kappa}{\kappa}}\right)\right)\\
&=\exp\left(\mathcal{L}(\kappa)-\kappa(\ln \tau+\gamma)+O(\log\kappa)\right),
\end{align*}
where $\kappa$ is the unique solution which satisfies \eqref{kappadef}.\\ 
Also from  \eqref{curlyLprime} we have 
\begin{equation}\label{logtausize}
\ln\tau=\ln\log\kappa+\frac{G_2(q^{\{\log\kappa\}})}{\log\kappa}+O\left(\frac{\log\log\kappa}{(\log\kappa)^2}\right).
\end{equation}
Hence using \eqref{curlyL} we obtain 
\[\Phi_{\mathbb{X}}(\tau)=\exp\left(\kappa\frac{G_1(q^{\{\log\kappa\}})-G_2(q^{\{\log\kappa\}})}{\log\kappa}+O\left(\frac{\kappa\log\log\kappa}{(\log\kappa)^2}\right)\right).\]
We note that from \eqref{logtausize} we have $\kappa\asymp q^{\tau}$ and thus
\[\log\kappa=\tau-G_2(q^{\{\log\kappa\}})+O\left(\frac{\log\tau}{\tau}\right),\]
since for every $\tau$ there is a unique $\kappa$ which satisfies \eqref{kappadef} and  $G_2(q^{\{\log\kappa\}})$ is bounded for any $\kappa$.
This is enough to obtain the shape of the result. It remains to prove that
\[-\frac1{\ln q}+\frac{\ln\cosh c}{c}-\tanh c<G_1(q^{\{\log\kappa\}})-G_2(q^{\{\log\kappa\}})<\frac{\ln(\cosh(q))}{q}-\tanh(q),\]
where $c=1.28377...$
For ease of notation let $t=q^{\{\log\kappa\}}$, we note that $1\le t<q$ and 
\[G_1(t)-G_2(t)=\sum_{l=-\infty}^{\infty}\left(\frac{f(tq^l)}{tq^l}-f'(tq^l)\right).\]
We recall from the definition of $f$ that the shape is different depending on the size of the input. So we split the sum: 
\begin{align*}
G_1(t)-G_2(t)&=\sum_{l<-\log t}\left(\frac{\ln\cosh(tq^l)}{tq^l}-\tanh(tq^l)\right)+\sum_{l\ge -\log t}\left(\frac{\ln\cosh(tq^l)-tq^l}{tq^l}-(\tanh(tq^l)-1)\right)\\
&=\sum_{l=-\infty}^{\infty}\left(\frac{\ln\cosh(tq^l)}{tq^l}-\tanh(tq^l)\right).
\end{align*}
To prove the upper bound, it is enough to show that all the summands are negative and so the sum will be less than the contribution from the $l=0$ term. We note that 
\[\frac{d}{dy}\left[\frac{\ln\cosh y}{y}-\tanh y\right]=\frac{\tanh y}y-\frac{\ln\cosh y}{y^2}-\text{sech}^2y=0\] 
when $y=\pm1.28377\ldots$ but the argument of our function is $tq^l>0$ for all $l$ since $t,q>0$, so we need only consider $c=1.28377\ldots$. A simple calculation shows that  this is a minimum and that $\frac{\ln\cosh y}{y}-\tanh y$ is strictly decreasing on the interval $(0,c)$ and strictly increasing on the interval $(c,\infty)$. Taking the limit as $y\to 0$ and $y\to \infty$ we see these are both $0$, hence all of the summands are negative since $\frac{\ln\cosh c}{c}-\tanh c=-0.339834\ldots$. Therefore we have a suitable upper bound by simply evaluating $\frac{\ln\cosh tq^l}{tq^l}-\tanh tq^l$ at $l=0$, which gives $\frac{\ln\cosh t}{t}-\tanh t$. As we discussed this function reaches its maximum value when $t$ does, in this case $t<q$.  \\
In order to consider the lower bound, we note that 
\[\int_{-\infty}^{\infty}\frac{\ln\cosh tq^y}{tq^y}-\tanh tq^y dy=-\frac1{\ln q}\le\sum_{l=-\infty}^{\infty}\left(\frac{\ln\cosh tq^l}{tq^l}-\tanh tq^l\right)-\left(\frac{\ln\cosh t}{t}-\tanh t\right), \]
hence 
\[\sum_{l=-\infty}^{\infty}\frac{\ln\cosh tq^l}{tq^l}-\tanh tq^l\ge -\frac1{\ln q}+\frac{\ln\cosh t}{t}-\tanh t.\]
Finally, as we discussed this is minimized when $t=c$.
\end{proof}
 \subsubsection{Tools for proving Proposition \ref{curlyLbound}}
First we recall the following standard estimates on $f$ and $f'$. 
\begin{lem}\label{fbounds}\cite[Lemma 4.5]{Lam15} $f$ is bounded on $[0,\infty)$ and $f(t)=t^2/2+O(t^4)$ if $0\le t<1$. Moreover, we have 
\[f'(t)=\begin{cases}
t+O(t^2)& \text{ if }0<t <1\\
O(e^{-2t})& \text{ if }t\ge1.
\end{cases}
\]
\end{lem}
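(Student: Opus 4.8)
The plan is to read off all four assertions directly from the closed form of $f$ in \eqref{def-f}, since each reduces to an elementary estimate for $\ln\cosh$ or $\tanh$. Recall that $f(t)=\ln\cosh t$ for $0\le t<1$ and $f(t)=\ln\cosh t-t$ for $t\ge1$; note in passing that $f$ (and $f'$) jumps at $t=1$, which is harmless for every bound claimed, because the estimates are stated on intervals that keep the two branches apart.

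For the boundedness claim I would treat the two ranges separately. On $[0,1]$ the function $\ln\cosh t$ is continuous, hence bounded by compactness. For $t\ge1$ I would use the identity $\ln\cosh t-t=\ln\bigl((1+e^{-2t})/2\bigr)$, whose right-hand side lies between $-\ln 2$ and $\ln\bigl((1+e^{-2})/2\bigr)$ for $t\ge1$; combining the two ranges shows that $f$ is bounded on $[0,\infty)$.

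For the behaviour near $0$, set $g(t):=\ln\cosh t$, so $g=f$ on $[0,1)$, and Taylor-expand: $g(0)=0$, $g'=\tanh$ so $g'(0)=0$, $g''=\text{sech}^2$ so $g''(0)=1$, and $g'''=-2\,\text{sech}^2 t\,\tanh t$ so $g'''(0)=0$. Since $g$ is smooth with $g^{(4)}$ bounded on $[0,1]$, Taylor's theorem with remainder yields $f(t)=t^2/2+O(t^4)$ on $[0,1)$, the implied constant controlled by $\sup_{[0,1]}|g^{(4)}|$. For $f'$ on $(0,1)$, where $f'(t)=\tanh t$, I would write $\tanh t-t=\int_0^t(\text{sech}^2 s-1)\,ds=-\int_0^t\tanh^2 s\,ds$ and use $|\tanh s|\le s$ to get $|f'(t)-t|\le t^3/3$, hence $f'(t)=t+O(t^2)$ (indeed with error $O(t^3)$).

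Finally, for $t\ge1$ one has $f'(t)=\tanh t-1=\dfrac{\sinh t-\cosh t}{\cosh t}=\dfrac{-e^{-t}}{\cosh t}=\dfrac{-2e^{-2t}}{1+e^{-2t}}$, so that $|f'(t)|\le 2e^{-2t}$ and $f'(t)=O(e^{-2t})$. The whole argument is routine calculus; the only points needing care are the bookkeeping around the piecewise definition of $f$ and the choice of the identity $\ln\cosh t-t=\ln\bigl((1+e^{-2t})/2\bigr)$, which makes both the large-$t$ boundedness and the exponential decay of $f'$ fall out at once.
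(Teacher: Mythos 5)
Your proposal is correct: the paper itself gives no proof of this lemma (it is quoted from \cite[Lemma 4.5]{Lam15}), and your argument is the standard elementary verification one would supply — the identity $\ln\cosh t-t=\ln\bigl((1+e^{-2t})/2\bigr)$ for boundedness and the decay $\tanh t-1=-2e^{-2t}/(1+e^{-2t})$, together with Taylor expansion of $\ln\cosh t$ and $\tanh t$ near $0$. Your remark that the jump of $f$ at $t=1$ is harmless, with $f'$ on $[1,\infty)$ read as the derivative of the second branch, is exactly the right bookkeeping, so nothing is missing.
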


\begin{lem}\label{logEpreal}
Let $r\ge c_q$ be a real number, where $c_q$ is a positive constant depending on $q$. Then we have 
\begin{equation}\label{logEP}
\ln E_P(r)=\begin{cases}
-r\ln(1-1/|P|)+O(1) & \text{ if }|P|\le r^{2/3},\\
\ln\cosh\left(\frac{r}{|P|}\right)+O\left(\frac{r}{|P|^2}\right) & \text{ if } |P|>r^{2/3},
\end{cases}
\end{equation}
and 
\begin{equation}\label{logderEP}
\frac{E_P'}{E_P}(r)=\begin{cases}
-\ln\left(1-\frac1{|P|}\right)\left(1+O\left(e^{-r^{1/3}}\right)\right) & \text{ if } |P|\le r^{2/3}\\
\frac1{|P|}\tanh\left(\frac{r}{|P|}\right)+O\left(\frac1{|P|^2}+\frac{r}{|P|^3}\right) & \text{ if } |P|>r^{2/3}.
\end{cases}
\end{equation}

\end{lem}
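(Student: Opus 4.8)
The statement to prove is Lemma~\ref{logEpreal}, which records the asymptotics of $\ln E_P(r)$ and of the logarithmic derivative $E_P'/E_P(r)$ in the two ranges $|P|\le r^{2/3}$ and $|P|>r^{2/3}$. Everything flows directly from the closed form \eqref{EPdef} for $E_P(z)$ together with elementary estimates, so the plan is essentially a careful bookkeeping of Taylor expansions, split according to whether $r/|P|$ is large or small. I would first rewrite \eqref{EPdef} in the more convenient form
\begin{equation*}
E_P(r)=\frac{1}{|P|+1}+\frac{|P|}{|P|+1}\cdot\frac{1}{2}\left(\left(1-\tfrac1{|P|}\right)^{-r}+\left(1+\tfrac1{|P|}\right)^{-r}\right),
\end{equation*}
and note that the averaged bracket is essentially $\cosh$ of something once one symmetrizes the exponents.

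\textbf{The regime $|P|\le r^{2/3}$ (small primes, $r/|P|$ large).} Here $(1-1/|P|)^{-r}$ dominates overwhelmingly: since $|P|\ge 2$ say, $(1-1/|P|)^{-r}\ge 2^{r^{1/3}}$ grows like a fixed power while $(1+1/|P|)^{-r}\le 1$ and $1/(|P|+1)\le 1$. So $E_P(r)=\frac12\frac{|P|}{|P|+1}(1-1/|P|)^{-r}\bigl(1+O((1+1/|P|)^{-r}(1-1/|P|)^{r}+(1-1/|P|)^{r})\bigr)$, and the relative error is $\ll ((|P|-1)/(|P|+1))^{r}\ll e^{-cr/|P|}\ll e^{-cr^{1/3}}$. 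Taking logarithms gives $\ln E_P(r)=-r\ln(1-1/|P|)+\ln\frac{|P|}{2(|P|+1)}+O(e^{-cr^{1/3}})=-r\ln(1-1/|P|)+O(1)$, which is the first line of \eqref{logEP}. For the logarithmic derivative, differentiate directly: $E_P'(r)=\frac{|P|}{|P|+1}\cdot\frac12\bigl(-\ln(1-1/|P|)(1-1/|P|)^{-r}-\ln(1+1/|P|)(1+1/|P|)^{-r}\bigr)$, so dividing by $E_P(r)$ and again using that the $(1+1/|P|)^{-r}$ term is negligible relative to the $(1-1/|P|)^{-r}$ term gives $E_P'/E_P(r)=-\ln(1-1/|P|)\bigl(1+O(e^{-cr^{1/3}})\bigr)$, matching the first line of \eqref{logderEP} (with $c=1$ absorbed, or one checks $e^{-cr^{1/3}}\ll e^{-r^{1/3}}$ for $r$ large, which is where the hypothesis $r\ge c_q$ is used).

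\textbf{The regime $|P|>r^{2/3}$ (large primes, $r/|P|<r^{1/3}\to 0$ relative to $|P|$, but $r/|P|$ itself may still be large; actually $r/|P|<r^{1/3}$).} Write $x=1/|P|$, so $0<x<r^{-2/3}$ and $rx<r^{1/3}$. Using $\ln(1\mp x)=\mp x-\tfrac{x^2}{2}+O(x^3)$ we get $(1-x)^{-r}=\exp(rx+\tfrac{rx^2}{2}+O(rx^3))$ and $(1+x)^{-r}=\exp(-rx+\tfrac{rx^2}{2}+O(rx^3))$. Hence
\begin{equation*}
\tfrac12\bigl((1-x)^{-r}+(1+x)^{-r}\bigr)=e^{rx^2/2+O(rx^3)}\cosh(rx)=\cosh(rx)\bigl(1+O(rx^2)\bigr),
\end{equation*}
since $rx^2<r^{-1/3}$ and $rx^3<r^{-1}$ are both $o(1)$. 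Also $\frac{|P|}{|P|+1}=1+O(x)$ and $\frac1{|P|+1}=O(x)$, and $\cosh(rx)\ge 1$, so $E_P(r)=\cosh(rx)\bigl(1+O(rx^2)+O(x/\cosh(rx))\bigr)=\cosh(rx)(1+O(rx^2+x))$. Taking logs, $\ln E_P(r)=\ln\cosh(rx)+O(rx^2+x)=\ln\cosh(r/|P|)+O(r/|P|^2)$ (the $x=1/|P|$ term being dominated by $r/|P|^2$ when $r\ge1$), which is the second line of \eqref{logEP}. For the derivative, differentiate $\ln E_P$; it is cleanest to differentiate the exact formula and estimate, giving $E_P'/E_P(r)=\frac{d}{dr}\ln\cosh(r/|P|)+\frac{d}{dr}O(r/|P|^2+1/|P|)$ after justifying that the $O$-term in $\ln E_P$ is differentiable with the expected derivative bound — more honestly, redo the expansion for $E_P'(r)$ itself: $E_P'(r)=\frac{|P|}{|P|+1}\cdot\frac12\bigl(x(1-x)^{-r}\cdot\frac{\ln(1-x)}{-x}+\dots\bigr)$, which after the same Taylor analysis yields $E_P'(r)=\frac1{|P|}\sinh(r/|P|)(1+O(\cdot))+\dots$, and dividing by $E_P(r)=\cosh(r/|P|)(1+O(\cdot))$ produces $\frac1{|P|}\tanh(r/|P|)+O(1/|P|^2+r/|P|^3)$, the second line of \eqref{logderEP}.

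\textbf{Main obstacle.} The only genuinely delicate point is bookkeeping the error terms in the large-prime regime: one must verify that the error from replacing $\tfrac12((1-x)^{-r}+(1+x)^{-r})$ by $\cosh(rx)$ is truly $O(rx^2)$ and not something larger, which requires knowing $rx^3=o(1)$ — exactly what the cutoff $|P|>r^{2/3}$ buys us (it gives $rx^3<r^{1-2}=r^{-1}$). A secondary nuisance is confirming that differentiating the $\ln E_P$ asymptotic is legitimate; the clean route is to never differentiate an asymptotic but instead re-run the expansion on the explicit $E_P'(r)$, which is purely mechanical. I do not expect any conceptual difficulty; this is a computational lemma whose role is to feed the Mertens-type sums in Proposition~\ref{curlyLbound}.
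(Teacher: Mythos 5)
Your small-prime regime and both halves of \eqref{logderEP} follow the paper's route and are fine, but there is a genuine gap in your treatment of $\ln E_P(r)$ for $|P|>r^{2/3}$. Your bookkeeping gives $E_P(r)=\cosh(r/|P|)\bigl(1+O\bigl(r/|P|^2+1/|P|\bigr)\bigr)$, and you then dismiss the $1/|P|$ term by asserting it is "dominated by $r/|P|^2$ when $r\ge 1$". That domination is equivalent to $|P|\le r$, and the range $|P|>r^{2/3}$ contains all primes with $|P|>r$ as well, where $1/|P|$ is strictly larger than $r/|P|^2$. So as written your argument only proves the weaker statement $\ln E_P(r)=\ln\cosh(r/|P|)+O\bigl(r/|P|^2+1/|P|\bigr)$, not the lemma. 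The loss is not cosmetic: in Proposition \ref{curlyLbound} one must sum the error over \emph{all} primes with $|P|>r^{2/3}$, and $\sum_{|P|>r^{2/3}}1/|P|$ diverges, whereas $\sum_{|P|>r^{2/3}}r/|P|^2\ll r^{1/3}$ is exactly what makes the lemma usable.

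The missing idea is a cancellation that the paper exploits. Do not estimate $\frac{|P|}{|P|+1}=1+O(1/|P|)$ and $\frac{1}{|P|+1}=O(1/|P|)$ separately; instead combine them:
\begin{equation*}
\frac{|P|}{|P|+1}\cosh\!\left(\frac{r}{|P|}\right)+\frac{1}{|P|+1}
=\cosh\!\left(\frac{r}{|P|}\right)-\frac{\cosh\!\left(\frac{r}{|P|}\right)-1}{|P|+1},
\end{equation*}
and then use $\cosh(t)-1\ll t\cosh(t)$ (valid for all $t\ge 0$) with $t=r/|P|$ to see that the subtracted term is $\ll \frac{r}{|P|^2}\cosh\!\left(\frac{r}{|P|}\right)$. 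Together with $(1\mp 1/|P|)^{-r}=e^{\pm r/|P|}\bigl(1+O(r/|P|^2)\bigr)$ this gives the genuine relative error $O(r/|P|^2)$ in all of $|P|>r^{2/3}$, and taking logarithms yields the second line of \eqref{logEP}. The analogous step for $E_P'$ (where the paper also uses $\sinh(t)\ll t\cosh(t)$) is harmless in your sketch because the stated error there already contains a $1/|P|^2$ term. With this one correction your proof matches the paper's.
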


\begin{proof}
First we prove \eqref{logEP}. Start by considering $|P|\le r^{2/3}$. Since $|P|$ is small we have
\begin{multline}\label{EPestpsmall}
E_P(r)=\frac{|P|}{2(|P|+1)}\left(1-\frac1{|P|}\right)^{-r}\left(1+\left(1+\frac1{|P|}\right)^{-r}+\frac2{|P|}\left(1-\frac1{|P|}\right)^r\right)\\
=\frac{|P|}{2(|P|+1)}\left(1-\frac1{|P|}\right)^{-r}\left(1+O(\exp(-r^{1/3}))\right),
\end{multline}
taking logs gives the desired result. 

Suppose now that $|P| >r^{2/3}$, we see that 
\[\left(1-\frac1{|P|}\right)^{-r}=e^{r/|P|}\left(1+O\left(\frac{r}{|P|^2}\right)\right)\text{ and } \left(1+\frac1{|P|}\right)^{-r}=e^{-r/|P|}\left(1+O\left(\frac{r}{|P|^2}\right)\right),\]
thus using the bounds $\cosh(t)-1\ll t\cosh(t)$ and $\sinh(t)\ll t\cosh(t)$, which are valid for all $t\ge 0$ we see that
\begin{align} \label{EPestpbig}
\nonumber E_P(r)&=\frac{|P|}{|P|+1}\cosh\left(\frac{r}{|P|}\right)\left(1+O\left(\frac{r}{|P|^2}\right)\right)+\frac1{|P|}\\
&=\cosh\left(\frac{r}{|P|}\right)\left(1+O\left(\frac{r}{|P|^2}\right)\right).
\end{align}
Taking logs completes the proof.\\
For \eqref{logderEP}, we first see from \eqref{EPdef} that 
\[E'_P(r)=\frac{-|P|}{2(|P|+1)}\left(\left(1-\frac1{|P|}\right)^{-r}\ln\left(1-\frac1{|P|}\right)+\left(1+\frac1{|P|}\right)^{-r}\ln\left(1+\frac1{|P|}\right)\right).\]
If $|P|\le r^{2/3}$ then  \eqref{EPestpsmall} finishes the claim.\\
On the other hand for $|P|>r^{2/3}$ we have
\begin{multline*}
E'_P(r)=\frac{|P|}{2(|P|+1)}\left(\frac{e^{r/|P|}}{|P|}-\frac{e^{-r/|P|}}{|P|}\right)\left(1+O\left(\frac1{|P|}+\frac{r}{|P^2|}\right)\right)\\
=\frac1{|P|}\sinh\left(\frac{r}{|P|}\right)\left(1+O\left(\frac1{|P|^2}+\frac{r}{|P|^3}\right)\right)+O\left(\frac1{|P|^2}\cosh\left(\frac{r}{|P|}\right)\right).
\end{multline*}
Combining this with \eqref{EPestpbig} we have the desired result.
\end{proof}
\begin{proof}[Proof of Proposition \ref{curlyLbound}.]
For the entire proof, we recall that $k\in\mathbb{Z}$ is the unique positive integer such that $q^k\le r<q^{k+1}$ and let $t:=\frac{r}{q^k}$.\\

We first prove the result for $\mathcal{L}(r)$.
By Lemma \ref{logEpreal} and Lemma \ref{fbounds} we have 
\begin{align}
\nonumber\mathcal{L}(r)&=-r\sum_{|P|\le r^{2/3}}\ln\left(1-\frac1{|P|}\right)+\sum_{|P|>r^{2/3}}\ln\cosh\left(\frac{r}{|P|}\right)+O(r^{2/3})\\
&=-r\sum_{\deg P\le k}\ln\left(1-\frac1{|P|}\right)+\sum_{|P|>r^{2/3}}f\left(\frac{r}{|P|}\right)+O(r^{2/3}).
\end{align}
The first summand is taken care of by recognizing Mertens' theorem, which we will apply at the end. The more interesting part of the proof comes from the second sum. First, from  the prime number theorem we get
\begin{align*}
\sum_{|P|>r^{2/3}}f\left(\frac{r}{|P|}\right)=\sum_{n>\frac23\log  r}\frac{q^n}nf\left(\frac{r}{q^n}\right)+O\left(\sum_{n>\frac23\log  r}\frac{q^{n/2}}nf\left(\frac{r}{q^n}\right)\right).
\end{align*}
The error term is 
 \begin{align*}
 \sum_{n>\frac23\log  r}\frac{q^{n/2}}nf\left(\frac{r}{q^n}\right)&\ll\sum_{n\ge\log  r}q^{n/2}\frac{r^2}{q^{2n}}+\sum_{\frac23\log  r<n<\log r}q^{n/2}\text{ by Lemma \ref{fbounds}},\\
 &\ll \sqrt{r}.
 \end{align*}

It remains to consider 
\begin{multline*}
\sum_{n>\frac23\log  r}\frac{q^n}nf\left(\frac{r}{q^n}\right)=\sum_{n>k+\log  k}+\sum_{n<k-\log  k}+\sum_{k-\log  k\le n\le k+\log  k}\left(\frac{q^n}nf\left(\frac{r}{q^n}\right)\right)\\
=T_1+T_2+T_3.
\end{multline*}
We first bound $T_1$ and $T_2$, again referring to Lemma \ref{fbounds}
\begin{align*}
T_1&\ll \frac1k\sum_{n>k+\log  k}\frac{q^nr^2}{q^{2n}}\ll \frac{r^2}{k}\sum_{n>k+\log  k}\frac1{q^n}\\
&\ll\frac{r^2}{k}\frac1{q^{k+\log  k}}\ll \frac{r}{(\log  r)^2} \text{ by the choice of } k \text{ with respect to } r,
\end{align*}
and similarly
\begin{align*}
T_2&\ll \sum_{n<k-\log  k}\frac{q^n}{n}\ll \frac{q^{k-\log  k}}{k}\ll \frac{r}{(\log r)^2}.
\end{align*}
For $T_3$ we notice that for $n\in [k-\log  k,k+\log  k]$, we have $\frac1n=\frac1k\left(1+O\left(\frac{\ln k}{k}\right)\right)$. Using this, we factor out $\frac{q^k}{k}$ and do the variable change $l=k-n$ so that 
\[T_3=\left(\frac{q^k}{k}+O\left(\frac{q^k \ln k}{k^2}\right)\right)\sum_{|l|\le \log  k}\frac{f(tq^l)}{q^l},\]
where we recall $t:=\frac{r}{q^k}$. Next, we see that for $|l|>\log  k$ the sum is small: 
\[\sum_{|l|>\log  k}\frac{f(tq^l)}{q^l}\ll \sum_{l>\log  k}\frac1{q^l}+\sum_{l<-\log  k}q^l\ll \frac1{q^{\ln k}}\ll\frac1k.\]
Hence we have 
\[ T_3=\frac{q^k}k\left(\sum_{l=-\infty}^{\infty}\frac{f(tq^l)}{q^l}+O\left(\frac{\ln k}{k}\right)\right).\]
Returning this to an expression in terms of $r$ we see
\[ T_3=\frac{r}{\log  r}\left(\sum_{l=-\infty}^{\infty}\frac{f(tq^l)}{tq^l}+O\left(\frac{\ln \ln r}{\ln r}\right)\right).\]
Finally, we complete the bound of $\mathcal{L}(r)$ by applying Mertens' theorem to the first summand and convert everything in terms of $r$.  Combining the terms which have $\frac{r}{\log  r}$ in common we achieve the claimed result.\\

For  $\mathcal{L}'(r)$, we again appeal to Lemma \ref{logEpreal} and Lemma \ref{fbounds} giving 
\begin{align}
\nonumber\mathcal{L}'(r)&=-\sum_{|P|\le r^{2/3}}\ln\left(1-\frac1{|P|}\right)+\sum_{|P|>r^{2/3}}\frac{\tanh\left(\frac{r}{|P|}\right)}{|P|}+O(r^{-1/3})\\
&=-\sum_{\deg P\le k}\ln\left(1-\frac1{|P|}\right)+\sum_{|P|>r^{2/3}}\frac{f'\left(\frac{r}{|P|}\right)}{|P|}+O(r^{-1/3}).
\end{align}
Applying the prime number theorem to the second sum we obtain 
\begin{equation*}
\sum_{|P|>r^{2/3}}\frac{f'\left(\frac{r}{|P|}\right)}{|P|}=\sum_{n>2/3\log r}\frac{f'(r/q^n)}{n}+O\left(\sum_{n>2/3\log r}\frac{f'(r/q^n)}{q^{n/2}n}\right).
\end{equation*}
The error term in this case is 
\begin{align*}
\sum_{n>2/3\log r}\frac{f'(r/q^n)}{q^{n/2}n}&\ll\sum_{n\ge\log r}\frac{r}{q^{3n/2}n}+\sum_{2/3\log r<n<\log r}\frac{e^{-2r/q^n}}{q^{n/2}n} \text{ by Lemma \ref{fbounds}}\\
&\ll \frac1{r^{1/3}\log r}.
\end{align*}
As before, we split the remaining sum into 3 pieces: 
\begin{multline*}
\sum_{n>2/3\log r}\frac{f'(r/q^n)}{n}=\sum_{n>k+\log  k}+\sum_{n<k-\log  k}+\sum_{k-\log  k\le n\le k+\log  k}\left(f'\left(\frac{r}{q^n}\right)\frac1n\right)\\
=T'_1+T_2'+T_3'.
\end{multline*}
We first bound $T_1'$ and $T_2'$, referring to Lemma \ref{fbounds} gives
\begin{align*}
T_1'&\ll\sum_{n>k+\log  k}\frac{r}{q^nn}\ll \frac{r}k\sum_{n>k+\log  k}\frac1{q^n}\\
&\ll\frac{r}{k}\frac1{q^{k+\log  k}}\ll\frac1{(\log r)^2} \text{by the choice of $k$ with respect to $r$,}
\end{align*}
similarly
\begin{equation*}
T_2'\ll \sum_{n<k-\log  k}\frac{e^{-2r/q^n}}{n}\ll \frac1{(\log r)^2}.
\end{equation*}
For $T_3'$ we notice that for $n\in [k-\log  k,k+\log  k]$, we have $\frac1n=\frac1k\left(1+O\left(\frac{\ln k}{k}\right)\right)$. Using this, we factor out $\frac{q^k}{k}$ and do the variable change $l=k-n$ and recall $t:=\frac{r}{q^k}$, so that 
\[T_3'\ll \left(\frac1{k}+O\left(\frac{\ln k}{k^2}\right)\right)\sum_{|l|<\log  k}f'(tq^l).\]
Next, we show this sum is small for $|l|>\log  k$:
\begin{align*}
\sum_{|l|>\log  k}f'(tq^l)&\ll \sum_{l>\log  k}e^{-2tq^l}+\sum_{l<-\log  k}tq^l\\
&\ll e^{-2\log  r}+\frac1{q^{\log  k}}, \text{ the bound on the second sum follows from } 1\le t\le q\\
&\ll \frac1{\log  r}.
\end{align*}
Hence we have 
\[T_3'\ll\frac1{k}\left(\sum_{l=-\infty}^{\infty}f'(tq^l)+O\left(\frac{\ln k}{k}\right)\right).\]
Finally, we complete the bound of $\mathcal{L}'(r)$ by applying Mertens' theorem to the first summand and convert everything in terms of $r$.  Combining the terms which have $\frac{1}{\log  r}$ in common we achieve the claimed result.\\
\end{proof}

\subsubsection{Proof of Theorem \ref{distnsig1X}} 
One of the key ingredients in the proof of Theorem \ref{distnsig1X} is to show that $|\mathbb{E}(L(1,\mathbb{X})^{r+it})|/\mathbb{E}(L(1,\mathbb{X})^{r})$ is rapidly decreasing in $t$ when $|t|\ge\sqrt{r\ln r}$. For this we prove the following lemmas.\\

\begin{lem}\label{ratioEp}
Let $r$ is a large positive number and $c_q\ge q$ a positive constant depending on $q$. If $|P|>\frac{r}{c_q}$, then for some positive constant $b_1$ we have 
\[\frac{|E_P(r+it)|}{E_P(r)}\le \exp\left(-b_1\left(1-\cos\left(t\ln\left(\frac{|P|+1}{|P|-1}\right)\right)\right)\right),\]
where $c_q$ is a positive constant dependent on $q$.
\end{lem}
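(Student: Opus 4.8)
The plan is to compute $|E_P(r+it)|$ essentially exactly and read off the $(1-\cos)$ decay from the off-diagonal terms. Introduce $a := -\ln\!\bigl(1-\tfrac1{|P|}\bigr) > 0$ and $b := \ln\!\bigl(1+\tfrac1{|P|}\bigr) > 0$, so that $\bigl(1-\tfrac1{|P|}\bigr)^{-z} = e^{az}$ and $\bigl(1+\tfrac1{|P|}\bigr)^{-z} = e^{-bz}$, and record the two identities $a+b = \ln\frac{|P|+1}{|P|-1}$ (exactly the argument of the cosine in the statement) and $a-b = -\ln\!\bigl(1-\tfrac1{|P|^2}\bigr) > 0$. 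Writing $c := \tfrac1{|P|+1}$, $d := \tfrac{|P|}{2(|P|+1)}$, $\alpha := e^{ar} = \bigl(1-\tfrac1{|P|}\bigr)^{-r}\ge 1$ and $\beta := e^{-br} = \bigl(1+\tfrac1{|P|}\bigr)^{-r}\le 1$, the definition \eqref{EPdef} becomes $E_P(r) = c + d(\alpha+\beta)$ and $E_P(r+it) = c + d\bigl(\alpha e^{iat} + \beta e^{-ibt}\bigr)$.

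First I would expand $|E_P(r+it)|^2 = |c + d\alpha e^{iat} + d\beta e^{-ibt}|^2$ by multiplying out; collecting the three cross terms gives
\[|E_P(r+it)|^2 = c^2 + d^2\alpha^2 + d^2\beta^2 + 2cd\alpha\cos(at) + 2cd\beta\cos(bt) + 2d^2\alpha\beta\cos\bigl((a+b)t\bigr).\]
Subtracting this from $E_P(r)^2 = (c+d\alpha+d\beta)^2$ yields the key identity
\[E_P(r)^2 - |E_P(r+it)|^2 = 2cd\alpha\bigl(1-\cos(at)\bigr) + 2cd\beta\bigl(1-\cos(bt)\bigr) + 2d^2\alpha\beta\bigl(1-\cos((a+b)t)\bigr),\]
a sum of three nonnegative terms. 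Discarding the first two and setting $\theta := (a+b)t = t\ln\frac{|P|+1}{|P|-1}$ and $\mu := 2d^2\alpha\beta/E_P(r)^2$, we get $|E_P(r+it)|^2 \le E_P(r)^2\bigl(1-\mu(1-\cos\theta)\bigr)$. Since $E_P(r)^2 \ge (d\alpha+d\beta)^2 \ge 4d^2\alpha\beta$ by AM--GM, $\mu \le \tfrac12$, so $\mu(1-\cos\theta)\in[0,1]$ and $\sqrt{1-y}\le e^{-y/2}$ applies, giving
\[\frac{|E_P(r+it)|}{E_P(r)} \le \exp\Bigl(-\tfrac12\mu(1-\cos\theta)\Bigr) = \exp\Bigl(-\tfrac{d^2\alpha\beta}{E_P(r)^2}(1-\cos\theta)\Bigr).\]

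It then remains to bound $d^2\alpha\beta/E_P(r)^2$ below by a positive constant $b_1$, and this is the only step that uses the hypothesis $|P| > r/c_q$. From $a \le \tfrac1{|P|-1}\le \tfrac2{|P|}$ (as $|P|\ge q\ge 2$) and $r < c_q|P|$ one gets $ar < 2c_q$, so $\alpha = e^{ar}\le e^{2c_q}$; combined with $c < 1$, $d < \tfrac12$, $\beta\le 1$ this gives $E_P(r) = c + d\alpha + d\beta \le C_q$ for a constant $C_q = C_q(q,c_q)$. On the other hand $d = \tfrac{|P|}{2(|P|+1)}\ge \tfrac{q}{2(q+1)}$ and $\alpha\beta = e^{(a-b)r}\ge 1$, whence $d^2\alpha\beta/E_P(r)^2 \ge \bigl(\tfrac{q}{2(q+1)}\bigr)^2 C_q^{-2} =: b_1 > 0$, completing the proof.

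The whole argument is a bookkeeping computation, so I do not anticipate a genuine obstacle; the only points demanding care are uniformity issues — checking that the radicand $1-\mu(1-\cos\theta)$ stays in $[0,1]$ (this is the AM--GM bound $\mu\le\tfrac12$) and that $\alpha$, and hence $E_P(r)$, admits an upper bound independent of $P$, which is precisely what the range restriction $|P| > r/c_q$ buys. Tracking how $b_1$ depends on $c_q$ is a mild annoyance, but since the downstream applications fix $c_q$ as a $q$-dependent constant this is harmless.
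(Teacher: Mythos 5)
Your proof is correct and follows essentially the same route as the paper: there $E_P(r+it)$ is written as $x_1+x_2e^{i\theta_2}+x_3e^{i\theta_3}$ with $x_1=d\beta$, $x_2=c$, $x_3=d\alpha$, $\theta_3=t\ln\frac{|P|+1}{|P|-1}$, and the quoted Granville--Soundararajan inequality $|x_1+x_2e^{i\theta_2}+x_3e^{i\theta_3}|\le (x_1+x_2+x_3)\exp\left(-\frac{x_1x_3(1-\cos\theta_3)}{(x_1+x_2+x_3)^2}\right)$ is exactly the cross-term mechanism you derive by hand, after which $|P|>r/c_q$ is used to bound $x_1x_3/(x_1+x_2+x_3)^2$ below, just as you do. The only difference is that you reprove that elementary inequality yourself (expansion, AM--GM, and $\sqrt{1-y}\le e^{-y/2}$) and make explicit the final constant bound that the paper leaves implicit.
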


\begin{proof}
Let $x_1$, $x_2$ and $x_3$ be positive real numbers and $\theta_2$ and $\theta_3$ be real numbers. We use the following inequality established in the proof of \cite[Lemma 3.2]{GranSound}:
\[|x_1+x_2e^{i\theta_2}+x_3e^{i\theta_3}|\le (x_1+x_2+x_3)\exp\left(-\frac{x_1x_3(1-\cos \theta_3)}{(x_1+x_2+x_3)^2}\right).\]
Choosing $x_1=\frac{|P|}{2(|P|+1)}(1+1/|P|)^{-r}$, $x_2=\frac1{|P|+1}$ and $x_3=\frac{|P|}{2(|P|+1)}(1-1/|P|)^{-r}$ with $\theta_2=t\ln(1+1/|P|)$ and $\theta_3=t\ln\left(\frac{|P|+1}{|P|-1}\right)$ provides the desired result since $|P|>\frac{r}{c_q}$. 
\end{proof}
\begin{lem}\label{decayexpect}
Let $r$ be large and let $c_q\ge q>4$ be a positive constant dependent on $q$. Then there exists a constant $b_2>0$ such that 
\[\frac{|\mathbb{E}(L(1,\mathbb{X})^{r+it})|}{\mathbb{E}(L(1,\mathbb{X})^r)}\ll 
\begin{cases}
\exp\left(-b_2\frac{t^2}{r \ln r}\right) & \text{ if } |t|\le \frac{r}{c_q}\\
\exp\left(-b_2\frac{|t|}{ \ln |t|}\right) & \text{ if } |t|>\frac{r}{c_q}.
\end{cases}
\]
\end{lem}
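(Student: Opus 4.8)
The plan is to bound the ratio multiplicatively over the primes, using Lemma \ref{ratioEp} for the large primes and an elementary second-derivative estimate for the small primes. Writing $|\mathbb{E}(L(1,\mathbb{X})^{r+it})|/\mathbb{E}(L(1,\mathbb{X})^r)=\prod_P |E_P(r+it)|/E_P(r)$, I would split the product at $|P|=r/c_q$. For the primes with $|P|\le r/c_q$ I would show the factors are at most $1$ (or extract only a harmless constant), so that only the large primes contribute to the decay: since $E_P(r)>0$ and, for $|P|$ small, $E_P(r+it)$ is dominated by the single term $\frac{|P|}{2(|P|+1)}(1-1/|P|)^{-r}$ up to a factor $1+O(\exp(-r^{1/3}))$ as in \eqref{EPestpsmall}, the modulus $|E_P(r+it)|$ is at most $E_P(r)(1+O(\exp(-r^{1/3})))$, and the product of these $O(r^{2/3})$ error factors is $1+o(1)$, absorbed into the implied constant.

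For the primes with $|P|>r/c_q$, Lemma \ref{ratioEp} gives
\[
\frac{|\mathbb{E}(L(1,\mathbb{X})^{r+it})|}{\mathbb{E}(L(1,\mathbb{X})^r)}\ll \exp\left(-b_1\sum_{|P|>r/c_q}\left(1-\cos\left(t\ln\frac{|P|+1}{|P|-1}\right)\right)\right),
\]
so everything reduces to a lower bound for the sum $S(r,t):=\sum_{|P|>r/c_q}\bigl(1-\cos(t\,\eta_P)\bigr)$, where $\eta_P:=\ln\frac{|P|+1}{|P|-1}=\frac{2}{|P|}+O(|P|^{-3})$. Using $1-\cos\theta\asymp \theta^2$ for $|\theta|\le 1$ and grouping primes by degree via the prime number theorem \eqref{PNT2} ($\pi_q(n)\sim q^n/n$), I would estimate $S(r,t)$ from below in the two regimes. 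When $|t|\le r/c_q$ (so $|t\eta_P|\lesssim 1$ for all primes in range), $1-\cos(t\eta_P)\asymp t^2\eta_P^2\asymp t^2/|P|^2$, and summing $\sum_{|P|>r/c_q}|P|^{-2}$ against $\pi_q(n)$ over $n\ge \log(r/c_q)$ gives a quantity $\asymp 1/(r\ln r)$ (the smallest primes in the range dominate), yielding $S(r,t)\gg t^2/(r\ln r)$. When $|t|>r/c_q$, I would instead look at the dyadic block of primes with $|P|\asymp |t|$: for such primes $t\eta_P\asymp 1$ is bounded away from $0$ and from multiples of $2\pi$ (after possibly shrinking the block), so $1-\cos(t\eta_P)\gg 1$, and there are $\gg |t|/\ln|t|$ such primes by \eqref{PNT2}, giving $S(r,t)\gg |t|/\ln|t|$.

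The main obstacle is the lower bound on $S(r,t)$ in the large-$t$ regime: one must rule out the possibility that $t\eta_P$ is close to a multiple of $2\pi$ for all primes $P$ in the relevant range, i.e. show the angles $t\eta_P$ are not all nearly resonant. The cleanest way is to exploit that $\eta_P$ varies with $|P|=q^{\deg P}$: as $|P|$ ranges over a dyadic interval around $|t|$, the product $t\eta_P\approx 2t/|P|$ sweeps through an interval of length $\gg 1$, so a positive proportion of the primes in that interval (by the prime number theorem, $\gg |t|/\ln|t|$ of them) have $t\eta_P$ in, say, $[\pi/2,3\pi/2]\pmod{2\pi}$, where $1-\cos(t\eta_P)\ge 1$. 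I would make this precise by choosing a sub-interval $[\alpha |t|,\beta|t|]$ of lengths calibrated so that $2t/x$ traverses exactly one such favorable arc as $x$ runs over it, then counting primes there via \eqref{PNT2}. Finally, combining the two-regime bounds on $S(r,t)$ with the large-prime product estimate, and absorbing the negligible small-prime factors, yields the stated dichotomy with some $b_2>0$ depending only on $q$ through $c_q$ and $b_1$.
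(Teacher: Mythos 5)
Your proposal is correct and follows essentially the same route as the paper: bound the ratio by $\prod_{y_1\le|P|\le y_2}|E_P(r+it)|/E_P(r)$ using $|E_P(r+it)|\le E_P(r)$, apply Lemma \ref{ratioEp}, and lower-bound the cosine sum via the prime number theorem, with the two regimes handled by primes just above $r$ (giving $t^2/(r\ln r)$) and by a block of primes of size comparable to $|t|$ (giving $|t|/\ln|t|$). The only difference is that your ``resonance'' worry in the large-$|t|$ regime is unnecessary: the paper simply takes $|P|\in[c_q|t|,2c_q|t|]$, so the angles $t\ln\frac{|P|+1}{|P|-1}\approx 2|t|/|P|$ automatically lie in a fixed compact subinterval of $(0,\pi)$ and $1-\cos(\cdot)\gg1$ without any equidistribution argument.
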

\begin{proof}
Let $z=r+it$. Since $|E_P(z)|\le E_P(r)$ we obtain for any real numbers $q\le y_1<y_2$ 
\begin{equation}
\frac{|\mathbb{E}(L(1,\mathbb{X})^{z})|}{\mathbb{E}(L(1,\mathbb{X})^r)}\le \prod_{y_1\le |P|\le y_2}\frac{|E_P(z)|}{E_P(r)}.
\end{equation}
Note that $|t|\ln\left(\frac{|P|+1}{|P|-1}\right)\sim2|t|/|P|$ so that when $|t|\le \frac{|P|}{c_q}$ we have 
\[1-\cos\left(|t|\ln\left(\frac{|P|+1}{|P|-1}\right)\right)\gg\frac{|t|^2}{|P|^2}.\]
If $|t|\le \frac{r}{c_q}$ then, we choose $y_1=r$ and $y_2=c_qr/2$. Appealing to Lemma \ref{ratioEp} we have 
\begin{align*}
\prod_{y_1\le |P|\le y_2}\frac{|E_P(z)|}{E_P(r)}&\ll \prod_{\ln r\le d\le\ln(c_q r/2)+1} \exp\left(-b_1\frac{q^d}{2d}\frac{|t|^2}{q^{2d}}\right)\\
&=\exp\left(-\frac{b_1|t|^2}{2}\sum_{\ln r\le d\le\ln(c_qr/2)+1}\frac1{dq^d}\right)\ll\exp\left(-b_2\frac{|t|^2}{r\ln r}\right).
\end{align*}
In the case of $|t|>\frac{r}{c_q}$ we use a similar argument but choose $y_1=c_q|t|$ and $y_2=2c_q|t|$ to complete the result.
\end{proof}
 Let $\varphi(y)=1$ if $y>1$ and equal to $0$ otherwise. Then we have the following smooth analogue of Perron's formula: 

\begin{lem}\cite[Lemma 4.7]{AY}
\label{PerronLemma}
Let $\lambda>0$ be a real number and $N$ be a positive integer. For any $c>0$ we have for $y>0$
\begin{equation}\label{Perron1}
0\le \frac1{2\pi i}\int_{c-i\infty}^{c+i\infty}y^s\left(\frac{e^{\lambda s}-1}{\lambda s}\right)^{N}\frac{ds}s-\varphi(y)\le\frac1{2\pi i}\int_{c-i\infty}^{c+i\infty}y^s\left(\frac{e^{\lambda s}-1}{\lambda s}\right)^{N}\frac{1-e^{-\lambda Ns}}{s}ds, 
\end{equation}
and 
\begin{equation}\label{Perron2}
0\le\varphi(e^{\lambda}y)-\varphi(y)\le \frac1{2\pi i}\int_{c-i\infty}^{c+i\infty}y^s\left(\frac{e^{\lambda s}-1}{\lambda s}\right)\frac{e^{\lambda s}-e^{-\lambda s}}{s}ds.
\end{equation}
\end{lem}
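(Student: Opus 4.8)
The plan is to reduce the entire statement to the classical discontinuous (Perron) integral
\[\frac1{2\pi i}\int_{c-i\infty}^{c+i\infty}\frac{x^{s}}{s}\,ds=\varphi(x)\qquad(x>0,\ x\neq 1),\]
valid for every $c>0$ (its value $\tfrac12$ at $x=1$ will be irrelevant). The feature that produces the kernels appearing in \eqref{Perron1} and \eqref{Perron2} is the elementary identity $\frac{e^{\lambda s}-1}{\lambda s}=\frac1\lambda\int_{0}^{\lambda}e^{us}\,du$, so that
\[\left(\frac{e^{\lambda s}-1}{\lambda s}\right)^{N}=\frac1{\lambda^{N}}\int_{[0,\lambda]^{N}}e^{(u_{1}+\cdots+u_{N})s}\,du_{1}\cdots du_{N}.\]

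First I would set, for each positive integer $N$,
\[I_{N}(y):=\frac1{2\pi i}\int_{c-i\infty}^{c+i\infty}y^{s}\left(\frac{e^{\lambda s}-1}{\lambda s}\right)^{N}\frac{ds}{s},\]
and prove the representation
\[I_{N}(y)=\frac1{\lambda^{N}}\int_{[0,\lambda]^{N}}\varphi\!\left(ye^{u_{1}+\cdots+u_{N}}\right)du_{1}\cdots du_{N}.\]
For this one truncates the $s$-integral to $[c-iT,c+iT]$, where the interchange with the $u$-integrals is trivial, and then lets $T\to\infty$, appealing to dominated convergence together with the classical fact that the truncated Perron integral $\frac1{2\pi i}\int_{c-iT}^{c+iT}x^{s}\,ds/s$ is bounded uniformly in $T$ and converges to $\varphi(x)$ for every $x\neq1$. (The set $\{\,ye^{u_{1}+\cdots+u_{N}}=1\,\}$ has measure zero, so the ambiguity of $\varphi$ at $1$ does not affect $I_{N}(y)$.) From this formula I would read off the four properties needed below: $0\le I_{N}(y)\le1$; the function $y\mapsto I_{N}(y)$ is non-decreasing; $I_{N}(y)=1$ for $y\ge1$; and $I_{N}(y)=0$ for $y\le e^{-N\lambda}$.

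Next I would exploit the shift principle that inserting a factor $e^{-\lambda Ns}$ (respectively $e^{\pm\lambda s}$) into the integrand defining $I_{N}$ amounts to replacing $y$ by $ye^{-\lambda N}$ (respectively $ye^{\pm\lambda}$). Writing $\frac{1-e^{-\lambda Ns}}{s}=\frac1s-\frac{e^{-\lambda Ns}}{s}$ then identifies the right-hand side of \eqref{Perron1} with $I_{N}(y)-I_{N}(ye^{-\lambda N})$, and writing $\frac{e^{\lambda s}-e^{-\lambda s}}{s}=\frac{e^{\lambda s}}{s}-\frac{e^{-\lambda s}}{s}$ identifies the right-hand side of \eqref{Perron2} with $I_{1}(e^{\lambda}y)-I_{1}(e^{-\lambda}y)$; here all the integrals converge absolutely, every integrand being $O(|s|^{-2})$ on $\re(s)=c$, so no further analytic care is needed at this step. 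Consequently \eqref{Perron1} reduces to the pair of inequalities $\varphi(y)\le I_{N}(y)$ and $I_{N}(ye^{-\lambda N})\le\varphi(y)$, while \eqref{Perron2} reduces to $\varphi(y)\le\varphi(e^{\lambda}y)$ and $\varphi(e^{\lambda}y)-\varphi(y)\le I_{1}(e^{\lambda}y)-I_{1}(e^{-\lambda}y)$. Each of these follows from a one-line case distinction according to the position of $y$ relative to $1$ and to $e^{-N\lambda}$ (respectively $e^{-\lambda}$), using the four listed properties of $I_{N}$: for example, for $y\le1$ one has $ye^{-\lambda N}\le e^{-\lambda N}$, so $I_{N}(ye^{-\lambda N})=0=\varphi(y)$; for $y>1$ one has $\varphi(y)=1$, which dominates $I_{N}$ everywhere; and the boundary values $y=1$ and $y=e^{-N\lambda}$ are checked directly from the integral formula.

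The main obstacle, such as it is, lies in the representation of $I_{N}(y)$: although $I_{N}(y)$ itself converges absolutely, this is only because of the cancellation hidden in $(e^{\lambda s}-1)/(\lambda s)=\lambda^{-1}\int_{0}^{\lambda}e^{us}\,du$, and replacing that factor by its integral representation destroys the cancellation, so the naively swapped double integral diverges. One therefore has to route the argument through the truncated segment $[c-iT,c+iT]$ and pass to the limit by dominated convergence, invoking the classical uniform bound for the partial Perron integral. Once that representation is in place, the rest is entirely elementary.
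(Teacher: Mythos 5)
Your proof is correct. The paper does not prove this lemma at all — it is quoted verbatim from \cite[Lemma 4.7]{AY} — so there is no internal argument to compare with; your route (writing $\bigl((e^{\lambda s}-1)/(\lambda s)\bigr)^{N}$ as an $N$-fold average of exponentials, reducing $I_N(y)$ to the classical discontinuous integral by truncating to $[c-iT,c+iT]$ and applying dominated convergence, then deducing both inequalities from the monotonicity of $I_N$, the value $1$ for $y\ge 1$, and the vanishing for $y\le e^{-N\lambda}$) is essentially the standard argument behind that citation. The one genuinely delicate point, namely that the naive Fubini swap fails because the $u$-representation destroys the cancellation in $(e^{\lambda s}-1)/(\lambda s)$, is exactly the point you identified and handled correctly via the uniform bound on the truncated Perron integral.
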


\begin{proof}[Proof of Theorem \ref{distnsig1X}]
We first prove \eqref{PhiXExp}. 
Let $0<\lambda<1/(2\kappa)$ be a real number which we choose later. Using \eqref{Perron1} from  Lemma \ref{PerronLemma}, taking $N=1$ we obtain 
\begin{align}\label{PhiXdiff}
\nonumber 0&\le \int_{\kappa-i\infty}^{\kappa+i\infty}\mathbb{E}(L(1,\mathbb{X})^s)(e^{\gamma}\tau)^{- s}\frac{e^{\lambda s}-1}{\lambda s}\frac{ds}{s}-\Phi_{\mathbb{X}}(\tau)\\
&\le \int_{\kappa-i\infty}^{\kappa+i\infty}\mathbb{E}(L(1,\mathbb{X})^s)(e^{\gamma}\tau)^{- s}\frac{(e^{\lambda s}-1)}{\lambda s}\frac{(1-e^{-\lambda s})}{s}ds.
\end{align}
Since $\lambda\kappa<1/2$ we have $|e^{\lambda s}-1|\le 3$ and $|e^{-\lambda s}-1|\le 2$. Hence, using Lemma \ref{decayexpect} along with the fact that $|\mathbb{E}(L(1,\mathbb{X})^s)|\le\mathbb{E}(L(1,\mathbb{X})^{\kappa})$ we obtain, for some constant $b_3>0$ that 
\begin{equation}\label{perronbound1}
\int_{\kappa-i\infty}^{\kappa-i\kappa^{3/5}}+\int_{\kappa+i\kappa^{3/5}}^{\kappa+i\infty}\mathbb{E}(L(1,\mathbb{X})^s)(e^{\gamma}\tau)^{- s}\frac{e^{\lambda s}-1}{\lambda s}\frac{ds}{s} \ll\frac{e^{-b_3\kappa^{1/6}}}{\lambda\kappa^{3/5}}\mathbb{E}(L(1,\mathbb{X})^{\kappa})(e^{\gamma}\tau)^{-\kappa},
\end{equation}
and similarly, 
\begin{equation}\label{perronbound2}
\int_{\kappa-i\infty}^{\kappa-i\kappa^{3/5}}+\int_{\kappa+i\kappa^{3/5}}^{\kappa+i\infty}\mathbb{E}(L(1,\mathbb{X})^s)(e^{\gamma}\tau)^{- s}\frac{(e^{\lambda s}-1)}{\lambda s}\frac{(1-e^{-\lambda s})}{s}ds \ll\frac{e^{-b_3\kappa^{1/6}}}{\lambda\kappa^{3/5}}\mathbb{E}(L(1,\mathbb{X})^{\kappa})(e^{\gamma}\tau)^{-\kappa}.
\end{equation}
Let $s=\kappa+it$. If $|t|\le\kappa^{3/5}$ then $|(e^{\lambda s}-1)(1-e^{-\lambda s})|\ll \lambda^2|s|^2$, hence the remaining part of the integral is bounded as follows
\[\int_{\kappa-i\kappa^{3/5}}^{\kappa+i\kappa^{3/5}}\mathbb{E}(L(1,\mathbb{X})^s)(e^{\gamma}\tau)^{- s}\frac{(e^{\lambda s}-1)}{\lambda s}\frac{(1-e^{-\lambda s})}{s}ds\ll \lambda \kappa^{3/5}\mathbb{E}(L(1,\mathbb{X})^{\kappa})(e^{\gamma}\tau)^{-\kappa}.\]
Combining this estimate with \eqref{PhiXdiff}, \eqref{perronbound1} and \eqref{perronbound2} we obtain 
\begin{align}\label{PhiXbnd2}
\nonumber \Phi_{\mathbb{X}}(\tau)-\frac1{2\pi i}&\int_{\kappa-i\kappa^{3/5}}^{\kappa+i\kappa^{3/5}}\mathbb{E}(L(1,\mathbb{X})^s)(e^{\gamma}\tau)^{- s}\frac{(e^{\lambda s}-1)}{\lambda s^2}ds\\
&\ll \left(\lambda\kappa^{3/5}+\frac{e^{-b_3\kappa^{1/6}}}{\lambda\kappa^{3/5}}\right)\mathbb{E}(L(1,\mathbb{X})^{\kappa})(e^{\gamma}\tau)^{-\kappa}.
\end{align}
On the other hand we have from \eqref{secondandthirdderiv} when $|t|\le \kappa^{3/5}$ then
\[\mathcal{L}(\kappa+it)=\mathcal{L}(\kappa)+it\mathcal{L}'(\kappa)-\frac{t^2}2\mathcal{L}''(\kappa)+O\left(\frac{|t|^3}{\kappa^2\ln\kappa}\right).\]
We also note that
\[\frac{e^{\lambda s}-1}{\lambda s^2}=\frac1s(1+O(\kappa))=\frac1{\kappa}\left(1-i\frac{t}{\kappa}+O\left(\lambda\kappa+\frac{t^2}{\kappa^2}\right)\right).\]
Hence, using the fact that $\mathbb{E}(L(1,\mathbb{X})^s)=\exp(\mathcal{L}(s))$ and $\mathcal{L}'(\kappa)=\ln \tau+\gamma$ we find
\begin{align*}
&\mathbb{E}(L(1,\mathbb{X})^s)(e^{\gamma}\tau)^{- s}\frac{(e^{\lambda s}-1)}{\lambda s^2}\\
&=\frac1{\kappa}\mathbb{E}(L(1,\mathbb{X})^{\kappa})(e^{\gamma}\tau)^{- \kappa}\exp\left(-\frac{t^2}2\mathcal{L}''(\kappa)\right)\left(1-i\frac{t}{\kappa}+O\left(\lambda\kappa+\frac{t^2}{\kappa^2}+\frac{|t|^3}{\kappa^2\ln\kappa}\right)\right).
\end{align*}
Thus, since we have chosen $\kappa$ such that the integral involving $it/\kappa$ vanishes we have 
\begin{multline}\label{almostGuassian}
\frac1{2\pi i}\int_{\kappa-i\kappa^{3/5}}^{\kappa+i\kappa^{3/5}}\mathbb{E}(L(1,\mathbb{X})^s)(e^{\gamma}\tau)^{- s}\frac{(e^{\lambda s}-1)}{\lambda s^2}ds\\
=\frac1{\kappa}\mathbb{E}(L(1,\mathbb{X})^{\kappa})(e^{\gamma}\tau)^{- \kappa}\frac1{2\pi}\int_{-\kappa^{3/5}}^{\kappa^{3/5}}\exp\left(-\frac{t^2}2\mathcal{L}''(\kappa)\right)\left(1+O\left(\lambda\kappa+\frac{t^2}{\kappa^2}+\frac{|t|^3}{\kappa^2\ln\kappa}\right)\right)dt.
\end{multline}
Further, from \eqref{secondandthirdderiv} we have $\mathcal{L}''(\kappa)\asymp1/(\kappa\ln\kappa)$, so there exists a positive constant $b_4$ such that 
\[\frac1{2\pi}\int_{-\kappa^{3/5}}^{\kappa^{3/5}}\exp\left(-\frac{t^2}2\mathcal{L}''(\kappa)\right)dt =\frac1{\sqrt{2\pi\mathcal{L}''(\kappa)}}\left(1+O\left(e^{-b_4\kappa^{1/6}}\right)\right),\]
and
\begin{multline*}
\frac1{2\pi}\int_{-\kappa^{3/5}}^{\kappa^{3/5}}|t|^n\exp\left(-\frac{t^2}2\mathcal{L}''(\kappa)\right)dt\le\frac1{2\pi}\int_{-\infty}^{\infty}|t|^n\exp\left(-\frac{t^2}2\mathcal{L}''(\kappa)\right)dt \\
\ll \frac1{(\mathcal{L}''(\kappa))^{(n+1)/2}}\ll \frac{(\kappa\ln\kappa)^{n/2}}{\sqrt{2\pi\mathcal{L}''(\kappa)}}.
\end{multline*}
Inserting these estimates into \eqref{almostGuassian} we get 
\begin{multline}\label{expecbnd}
\frac1{2\pi i}\int_{\kappa-i\kappa^{3/5}}^{\kappa+i\kappa^{3/5}}\mathbb{E}(L(1,\mathbb{X})^s)(e^{\gamma}\tau)^{- s}\frac{(e^{\lambda s}-1)}{\lambda s^2}ds\\
=\frac{\mathbb{E}(L(1,\mathbb{X})^{\kappa})(e^{\gamma}\tau)^{- \kappa}}{\kappa\sqrt{2\pi \mathcal{L}''(\kappa)}}\left(1+O\left(\lambda\kappa+\sqrt{\frac{\ln\kappa}{\kappa}}\right)\right).
\end{multline}
Finally, combining the estimates \eqref{PhiXbnd2}, \eqref{expecbnd} and choosing $\lambda=\kappa^{-2}$ we obtain the desired result.\\
\indent Next we prove \eqref{PhiXExplambda}. To do this let $0\le \lambda\le 1/\kappa$.  Using \eqref{Perron2} from Lemma \ref{PerronLemma}, we have 
\[\varphi(e^{-\lambda}\tau)-\varphi(\tau)\le \frac1{2\pi i}\int_{\kappa-i\infty}^{\kappa+i\infty}\mathbb{E}(L(1,\mathbb{X})^s)(e^{\gamma}\tau)^{-s}\frac{(e^{\lambda s}-1)}{\lambda s}\frac{e^{\lambda s}-e^{-\lambda s}}{s}ds.\]
We write $s=\kappa+it$ and split this integral into two pieces: $|t|\le \lambda\sqrt{\kappa\ln\kappa}$ and $|t|>\lambda\sqrt{\kappa\ln\kappa}$. \\
\indent We note that both $|(e^{\lambda s}-1)/\lambda s|$ and $|(e^{\lambda s}-e^{-\lambda s})/\lambda s|$ are less than $4$. Therefore, it follows that the first part of the integral contributes $\ll \lambda \sqrt{\kappa\ln \kappa}\mathbb{E}(L(1,\mathbb{X})^{\kappa})(e^{\lambda}\tau)^{-\kappa}$. Then, from Lemma \ref{decayexpect} the second portion contributes
\begin{multline*}
\ll \lambda\mathbb{E}(L(1,\mathbb{X})^{\kappa})(e^{\lambda}\tau)^{-\kappa}\left(\int_{\sqrt{\kappa\ln\kappa}<|t|\le\tfrac{\kappa}{c_q}}e^{-b_2t^2/(\kappa\ln\kappa)} 
+\int_{|t|\ge \tfrac{\kappa}{c_q}}e^{-b_2|t|/(\ln|t|)}\right) \\
\ll \lambda \sqrt{\kappa\ln \kappa}\mathbb{E}(L(1,\mathbb{X})^{\kappa})(e^{\lambda}\tau)^{-\kappa}.
\end{multline*}
The final result follows from \eqref{PhiXExp} and \eqref{secondandthirdderiv}, specifically they prove: 
\begin{equation}\label{PhiXasymp}
\Phi_{\mathbb{X}}(\tau)\asymp \frac{\mathbb{E}(L(1,\mathbb{X})^{\kappa})(e^{\gamma}\tau)^{-\kappa}}{\kappa\sqrt{\mathcal{L}''(\kappa)}}\asymp \sqrt{\frac{\ln \kappa}{\kappa}}\mathbb{E}(L(1,\mathbb{X})^{\kappa})(e^{\gamma}\tau)^{-\kappa}.
\end{equation}

\end{proof}

\section{Proofs of Theorem \ref{distfuncchi} and Corollary \ref{disthD} }\label{MainThmProofs}
We begin with some notation: 
Let 
\[\mathbb{P}(L(1,\chi_D)>e^{\gamma}\tau):=\frac1{|\hn|}|\{D\in\hn : L(1,\chi_D)>e^{\gamma}\tau\}|\]
and 
\[M(z):=\frac1{|\hn|}\sum_{D\in\hn}L(1,\chi_D)^z.\]
\begin{proof}[Proof of Theorem \ref{distfuncchi}]
As in section \ref{randmoddist} let $\kappa=\kappa(\tau)$ be the unique solution to \eqref{kappadef}. Let $N$ be a positive integer and $0<\lambda<\min\{1/(2\kappa),1/N\}$ be a real value which we choose later. Finally, let $Y=b\log |D|/(\log_2|D|\log_3|D|)$ for some $b>0$ small enough. 

If $\log|D|$ is large enough, then for our range of $\tau$ we have $\kappa\le Y$, which follows from \eqref{logtausize}. Additionally, this means Lemma \ref{relatechitoX} holds for all $s=\kappa+it$ as long as $|t|\le Y$ so we consider the following integrals:
\[J(\tau)=\frac1{2\pi i}\int_{\kappa-i\infty}^{\kappa+i\infty}\mathbb{E}(L(1,\mathbb{X})^{s})(e^{\gamma}\tau)^{-s}\left(\frac{e^{\lambda s}-1}{\lambda s}\right)^N\frac{ds}{s},\]
and 
\[J_M(\tau)=\frac1{2\pi i}\int_{\kappa-i\infty}^{\kappa+i\infty}M(s)(e^{\gamma}\tau)^{-s}\left(\frac{e^{\lambda s}-1}{\lambda s}\right)^N\frac{ds}{s}.\]
By Lemma \ref{PerronLemma} we see that 
\begin{equation}\label{boundJ}
\Phi_{\mathbb{X}}(\tau)\le J(\tau)\le \Phi_{\mathbb{X}}(e^{-\lambda N}\tau)
\end{equation}
and 
\begin{equation}\label{boundJM}
\mathbb{P}(L(1,\chi_D)>e^{\gamma}\tau)\le J_M(\tau)\le \mathbb{P}(L(1,\chi_D)>e^{\gamma-\lambda N}\tau).
\end{equation}
Using that $|e^{\lambda s}-1|\le 3$ we have 
\[\int_{\kappa-i\infty}^{\kappa-iY}\int_{\kappa+iY}^{\kappa+i\infty}\mathbb{E}(L(1,\mathbb{X})^{s})(e^{\gamma}\tau)^{-s}\left(\frac{e^{\lambda s}-1}{\lambda s}\right)^N\frac{ds}{s}\ll \frac1N\left(\frac3{\lambda Y}\right)^N\mathbb{E}(L(1,\mathbb{X})^{\kappa})(e^{\gamma}\tau)^{-\kappa},\]
and similarly, together with Lemma \ref{relatechitoX} we obtain 
\begin{align*}
\int_{\kappa-i\infty}^{\kappa-iY}\int_{\kappa+iY}^{\kappa+i\infty}M(s)(e^{\gamma}\tau)^{-s}\left(\frac{e^{\lambda s}-1}{\lambda s}\right)^N\frac{ds}{s}&\ll \frac1N\left(\frac3{\lambda Y}\right)^NM(\kappa)(e^{\gamma}\tau)^{-\kappa} \\
&\ll \frac1N\left(\frac3{\lambda Y}\right)^N\mathbb{E}(L(1,\mathbb{X})^{\kappa})(e^{\gamma}\tau)^{-\kappa}.
\end{align*}
For the remaining parts of the integral we have that $|t|\le Y$ so we apply Lemma \ref{relatechitoX} which states that $M(s)-\mathbb{E}(L(1,\mathbb{X}))^s \ll \mathbb{E}(L(1,\mathbb{X})^{\Re s})/(\log|D|)^{11}$. Then use the inequality $|(e^{\lambda s}-1)/\lambda s|\le 4$ to obtain 
\[J_M(\tau)-J(\tau)\ll \frac1N\left(\frac3{\lambda Y}\right)^N\mathbb{E}(L(1,\mathbb{X})^{\kappa})(e^{\gamma}\tau)^{-\kappa}+\frac{Y}{\kappa}4^N\frac{\mathbb{E}(L(1,\mathbb{X})^{\kappa})}{(e^{\gamma}\tau)^{\kappa}(\log|D|)^{11}}.\]
Choosing $N=[\log_2|D|]$ and $\lambda=e^{10}/Y$ then \eqref{PhiXasymp} gives us that 
\begin{equation}\label{bounddiffJJM}
J_M(\tau)-J(\tau)\ll\frac{\Phi_{\mathbb{X}}(\tau)}{(\log |D|)^8}.
\end{equation}
On the other hand, by Theorem \ref{distfuncX} in combination with our choice for $\lambda$, $N$ and $Y$ we have 
\[\Phi_{\mathbb{X}}(e^{\pm\lambda N}\tau)=\Phi_{\mathbb{X}}(\tau)\left(1+O\left(\frac{e^{\tau}(\log_2|D|)^2\log_3|D|}{\log |D|}\right)\right).\]
Hence, combining \eqref{boundJ}, \eqref{boundJM} and \eqref{bounddiffJJM} 
\begin{align*}
\mathbb{P}(L(1,\chi_D)>e^{\gamma}\tau)&\le J_M(\tau)\\
&\le J(\tau)+O\left(\frac{\Phi_{\mathbb{X}}(\tau)}{(\log |D|)^8}\right)\\
&\le \Phi_{\mathbb{X}}(\tau)\left(1+O\left(\frac{e^{\tau}\log_2|D|\log_3|D|}{\log |D|}\right)\right),
\end{align*}
and 
\begin{align*}
\mathbb{P}(L(1,\chi_D)>e^{\gamma}\tau)&\ge J_M(e^{\lambda N}\tau)\\
&\ge J(e^{\lambda N}\tau)+O\left(\frac{\Phi_{\mathbb{X}}(\tau)}{(\log |D|)^8}\right)\\
&\ge \Phi_{\mathbb{X}}(\tau)\left(1+O\left(\frac{e^{\tau}(\log_2|D|)^2\log_3|D|}{\log |D|}\right)\right).
\end{align*}
The final step is done by recalling for $D\in\hn$ we have $|D|=q^{n}$.
\end{proof}
And now how to make use of Theorem \ref{distfuncchi} to prove the corollaries of Section \ref{applications:intro}.
\begin{proof}[Proof of Corollary \ref{disthD}]
We note by Artin's class number formula given by \eqref{Artinform} that $h_D\ge e^{\gamma}\tau\frac{\sqrt{|D|}}{\sqrt{q}}$ if and only if for $D\in\h$ we have $L(1,\chi_D)\ge e^{\gamma}\tau$. Specializing to $n=2g+1$ we see Theorem \ref{distfuncchi} proved that the number of $D$ such that $L(1,\chi_D)>e^{\gamma}\tau$ is given by
\[|\h|\Phi_{\mathbb{X}}(\tau)\left(1+O\left(\frac{e^{\tau}(\log_2|D|)^2\log_3|D|}{\log |D|}\right)\right).\]
Finally, we use Theorem \ref{distfuncX} to conclude that the number of $D$ such that $h_D\ge e^{\gamma}\tau\frac{\sqrt{|D|}}{\sqrt{q}}$ is given by 
\begin{align*}
&|\h|\exp\left(-C_1(q^{\{\log\kappa\}})\frac{q^{\tau-C_0(q^{\{\log\kappa\}})}}{\tau}\left(1+O\left(\frac{\log\tau}{\tau}\right)\right)\right)
\times \left(1+O\left(\frac{e^{\tau}(\log_2|D|)^2\log_3|D|}{\log |D|}\right)\right)\\
&=|\h|\exp\left(-C_1(q^{\{\log\kappa\}})\frac{q^{\tau-C_0(q^{\{\log\kappa\}})}}{\tau}\left(1+O\left(\frac{\log\tau}{\tau}\right)\right)\right),
\end{align*}
where the final estimate follows from the range of $\tau$. The analogous estimate for small values of $h_D$ follows along the same lines. 
\end{proof}
\section{Optimal $\Omega$-results: Proof of Theorem \ref{OmegaResults}}\label{Optomegsection}

For each irreducible polynomial $P\in \F$, let  $\delta_P\in \{-1, 1\}.$ Define $\mathcal{S}_N(n, \{\delta_P\})$ to be the set of all monic irreducibles $Q\in \F$ such that $\deg Q=N$ and 
$$ \left(\frac{P}{Q}\right)=\delta_P,$$
for all irreducibles $P$ with $\deg P\leq n$. We also let $\mathcal{P}(n)$ denote the product of all irreducible polynomials $P$ with $\deg P\leq n$.

\begin{lem}\label{SetPrimes}
Let $N$ be large, and  $1\leq n\leq (\log_q(N))^2$ be a real number. Then, we have
$$|\mathcal{S}_N(n, \{\delta_P\})|=\frac{q^N}{2^{\Pi_q(n)}N}+ O\left(q^{\frac{N}{2}+n}\right).$$
\end{lem}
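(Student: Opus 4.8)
The idea is to write the characteristic function of $\mathcal{S}_N(n,\{\delta_P\})$ as a short Euler-type product in quadratic symbols, expand it, and estimate the resulting character sums over primes. Throughout, $\sum_{\deg Q=N}$ denotes a sum over monic irreducibles $Q$ of degree $N$. Since $N$ is large and $n\le(\log_q N)^2<N$, any such $Q$ is coprime to every irreducible $P$ of degree at most $n$, so $\legendre{P}{Q}\in\{-1,1\}$, and the condition $Q\in\mathcal{S}_N(n,\{\delta_P\})$ is equivalent to $\frac{1}{2}\bigl(1+\delta_P\legendre{P}{Q}\bigr)=1$ for every such $P$. Hence
\[
|\mathcal{S}_N(n,\{\delta_P\})|=\frac1{2^{\Pi_q(n)}}\sum_{\deg Q=N}\ \prod_{\deg P\le n}\Bigl(1+\delta_P\legendre{P}{Q}\Bigr).
\]
Multiplying out the product over the (squarefree, monic) divisors $E$ of $\mathcal{P}(n)$ and using that the Jacobi symbol is multiplicative in its top argument, this becomes
\[
|\mathcal{S}_N(n,\{\delta_P\})|=\frac1{2^{\Pi_q(n)}}\sum_{E\mid\mathcal{P}(n)}\Bigl(\prod_{P\mid E}\delta_P\Bigr)\sum_{\deg Q=N}\legendre{E}{Q}.
\]

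I would then isolate the term $E=1$, which contributes $\pi_q(N)/2^{\Pi_q(n)}$; by the prime number theorem \eqref{PNT2} this equals $q^N/(N2^{\Pi_q(n)})+O\bigl(q^{N/2}/(N2^{\Pi_q(n)})\bigr)$, which is the claimed main term plus an admissible error. For each divisor $E\ne1$, the modulus $E$ is monic, squarefree and non-constant, hence coprime to $Q$ and with $\legendre{\cdot}{E}$ a non-trivial Dirichlet character modulo $E$. Since $q\equiv1\pmod4$, quadratic reciprocity \eqref{QuadRecip} gives $\legendre{E}{Q}=\legendre{Q}{E}$, so \eqref{CharSumGRH} yields
\[
\sum_{\deg Q=N}\legendre{E}{Q}\ \ll\ \frac{q^{N/2}}{N}\,\deg E .
\]
Summing this over all $E\mid\mathcal{P}(n)$ and using that each irreducible $P$ with $\deg P\le n$ divides exactly half of the $2^{\Pi_q(n)}$ divisors of $\mathcal{P}(n)$, one has $\sum_{E\mid\mathcal{P}(n)}\deg E=2^{\Pi_q(n)-1}\deg\mathcal{P}(n)$, and \eqref{PNT1} gives $\deg\mathcal{P}(n)=\sum_{k\le n}k\pi_q(k)\le\sum_{k\le n}q^k<q^{n+1}/(q-1)$. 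Hence the terms with $E\ne1$ contribute altogether $\ll q^{N/2}\deg\mathcal{P}(n)/N\ll q^{N/2+n}$, which combined with the $E=1$ term gives the statement.

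The one step that needs care is the bookkeeping with the factor $2^{\Pi_q(n)}$: there are $2^{\Pi_q(n)}$ divisors $E$, so the crude bound $\deg E\le\deg\mathcal{P}(n)$ applied termwise would give a useless error of order $2^{\Pi_q(n)}q^{N/2}\deg\mathcal{P}(n)/N$. One must instead use the exact identity $\sum_{E\mid\mathcal{P}(n)}\deg E=2^{\Pi_q(n)-1}\deg\mathcal{P}(n)$, so that the exponential-in-$\Pi_q(n)$ factor cancels precisely against the $1/2^{\Pi_q(n)}$ out front. Everything else is a routine application of \eqref{PNT1}, \eqref{PNT2}, \eqref{QuadRecip} and \eqref{CharSumGRH} together with the multiplicativity of the Jacobi symbol.
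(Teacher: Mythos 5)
Your proof is correct and takes essentially the same route as the paper: expand $\prod_{\deg P\le n}\bigl(1+\delta_P\legendre{P}{Q}\bigr)$ over the squarefree divisors of $\mathcal{P}(n)$, extract the main term from the trivial divisor via the prime number theorem, and bound each non-trivial term by the Weil bound \eqref{CharSumGRH} together with $\deg\mathcal{P}(n)\ll q^n$. One small quibble: your closing remark is mistaken --- the crude termwise bound $\deg E\le\deg\mathcal{P}(n)$ already suffices, because the prefactor $1/2^{\Pi_q(n)}$ cancels the $2^{\Pi_q(n)}$ divisors (this is exactly what the paper does), so the exact identity $\sum_{E\mid\mathcal{P}(n)}\deg E=2^{\Pi_q(n)-1}\deg\mathcal{P}(n)$ is harmless but unnecessary.
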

\begin{proof}
For each monic polynomial $f\in \F$, define $\delta_f=\prod_{P\mid f} \delta_P$. Let $Q$ be an irreducible polynomial of degree $N$. Then, observe that
\begin{equation}\label{KeyOmega}
\sum_{f \mid \mathcal{P}(n)} \delta_f \left(\frac{f}{Q}\right)=\prod_{ \deg P\leq n} \left(1+\delta_P\left(\frac{P}{Q}\right)\right)=\begin{cases} 2^{\Pi_q(n)} &\text{ if } Q \in \mathcal{S}_N(n, \{\delta_P\}),\\ 0 &\text{ otherwise}.
\end{cases}
\end{equation}
Therefore, we deduce that 
$$|\mathcal{S}_N(n, \{\delta_P\})| =\frac{1}{2^{\Pi_q(n)}} \sum_{f \mid \mathcal{P}(n)} \delta_f\sum_{\substack{Q \text{ irreducible }\\ \deg Q=N}}  \left(\frac{f}{Q}\right).$$
Since all the divisors of $\mathcal{P}(n)$ are square-free, we obtain from  \eqref{CharSumGRH} that for all $f\neq 1$ such that $f\mid \mathcal{P}(n)$, we have
$$ \sum_{\substack{Q \text{ irreducible }\\ \deg Q=N}}  \left(\frac{f}{Q}\right) \ll \deg (f) q^{\frac{N}{2}}\ll q^{\frac{N}{2}+n}.$$
since 
\begin{equation}\label{DegF}
\deg f\leq \deg \mathcal{P}(n)=\sum_{j=1}^nj\pi_q(j)\asymp q^n,
\end{equation}
by the prime number theorem. Finally, since the number of divisors of $\mathcal{P}(n)$ is $2^{\Pi_q(n)}$ we deduce that
$$|\mathcal{S}_N(n, \{\delta_P\})|= \frac{\pi_q(N)}{2^{\Pi_q(n)}}+ O\left(q^{\frac{N}{2}+n} \right) $$
which completes the proof.
\end{proof}

We shall deduce Theorem \ref{OmegaResults} from the following proposition
\begin{pro}\label{Omega}
 We have
\begin{equation}\label{Optimal1} \sum_{Q\in \mathcal{S}_N(n, \{\delta_P\})} L(1, \chi_Q)= \zeta_{\mathbb{A}}(2)\frac{\pi_q(N)}{2^{\Pi_q(n)}}\prod_{\deg P\leq n} \left(1+\frac{\delta_P}{|P|}\right)+ O\left(N^2 q^{N/2+2n}\right).
\end{equation}
\end{pro}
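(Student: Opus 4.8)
The plan is to open $L(1,\chi_Q)$ into a short Dirichlet polynomial, insert the character-sum detector \eqref{KeyOmega} for membership in $\mathcal{S}_N(n,\{\delta_P\})$, and then sort the resulting sums over $Q$ according to whether their argument is a perfect square; the square terms will furnish the main term, while the others are controlled by the Weil bound \eqref{CharSumGRH}. Since $\deg Q=N$ and $\chi_Q$ is a nontrivial character modulo $Q$, \eqref{CharSum} shows that $L(s,\chi_Q)$ is a polynomial in $q^{-s}$ of degree at most $N-1$, so $L(1,\chi_Q)=\sum_{\deg f\le N-1}\chi_Q(f)/|f|$. Summing over $Q\in\mathcal{S}_N(n,\{\delta_P\})$, interchanging the (finite) sums, writing $\chi_Q(f)=\left(\frac{Q}{f}\right)=\left(\frac{f}{Q}\right)$ by quadratic reciprocity \eqref{QuadRecip}, and using that the indicator of $Q\in\mathcal{S}_N(n,\{\delta_P\})$ equals $2^{-\Pi_q(n)}\sum_{d\mid\mathcal{P}(n)}\delta_d\left(\frac{d}{Q}\right)$ by \eqref{KeyOmega}, together with multiplicativity of the Kronecker symbol in its lower entry, I am led to
\[
\sum_{Q\in\mathcal{S}_N(n,\{\delta_P\})}L(1,\chi_Q)=\frac{1}{2^{\Pi_q(n)}}\sum_{\substack{f\text{ monic}\\ \deg f\le N-1}}\frac{1}{|f|}\sum_{d\mid\mathcal{P}(n)}\delta_d\sum_{\substack{Q\text{ irreducible}\\ \deg Q=N}}\left(\frac{df}{Q}\right).
\]

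For the main term I would isolate the pairs $(d,f)$ for which $df$ is a perfect square. Because $d\mid\mathcal{P}(n)$ is squarefree this happens exactly when $f=dm^2$ with $m$ monic, and then $\deg d\le\deg f\le N-1$ forces $\deg(dm)<N$, so no irreducible $Q$ of degree $N$ divides $dm$ and $\sum_{\deg Q=N}\left(\frac{df}{Q}\right)=\pi_q(N)$ exactly. This reduces the square contribution to $\tfrac{\pi_q(N)}{2^{\Pi_q(n)}}\sum_{d\mid\mathcal{P}(n)}\tfrac{\delta_d}{|d|}\sum_{2\deg m\le N-1-\deg d}|m|^{-2}$. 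Since $\sum_{\deg m\le M}|m|^{-2}=\zeta_{\mathbb{A}}(2)(1-q^{-M-1})$, both completing the $m$-sum to $\zeta_{\mathbb{A}}(2)$ and adjoining the divisors $d$ with $\deg d\ge N$ cost only $\ll\zeta_{\mathbb{A}}(2)\pi_q(N)q^{-N/2}\prod_{\deg P\le n}(1+|P|^{-1/2})\ll\pi_q(N)q^{-N/2}\ll q^{N/2}/N$, where I use the crude bound $\prod_{\deg P\le n}(1+|P|^{-1/2})\le 2^{\Pi_q(n)}$ and $\pi_q(N)\ll q^N/N$ from \eqref{PNT2}. What is left is $\zeta_{\mathbb{A}}(2)\tfrac{\pi_q(N)}{2^{\Pi_q(n)}}\sum_{d\mid\mathcal{P}(n)}\tfrac{\delta_d}{|d|}=\zeta_{\mathbb{A}}(2)\tfrac{\pi_q(N)}{2^{\Pi_q(n)}}\prod_{\deg P\le n}\left(1+\tfrac{\delta_P}{|P|}\right)$, the asserted main term.

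For the error term, when $df$ is not a perfect square the map $Q\mapsto\left(\frac{df}{Q}\right)=\left(\frac{Q}{df}\right)$ (quadratic reciprocity again) is a nontrivial Dirichlet character modulo $df$, so \eqref{CharSumGRH} gives $\sum_{\deg Q=N}\left(\frac{df}{Q}\right)\ll\tfrac{q^{N/2}}{N}(\deg d+\deg f)$. Summing this over $f$ with $\deg f\le N-1$ (using $\sum_{\deg f\le N-1}|f|^{-1}=N$ and $\sum_{\deg f\le N-1}\deg f\,|f|^{-1}\ll N^2$), and then over $d\mid\mathcal{P}(n)$ (using $\sum_{d\mid\mathcal{P}(n)}1=2^{\Pi_q(n)}$ and $\sum_{d\mid\mathcal{P}(n)}\deg d=2^{\Pi_q(n)-1}\deg\mathcal{P}(n)\ll 2^{\Pi_q(n)}q^n$, the latter from \eqref{DegF}), bounds the non-square part by $\ll\tfrac{q^{N/2}}{N}\left(Nq^n+N^2\right)\ll N^2q^{N/2+2n}$. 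Combining with the $O(q^{N/2}/N)$ from the main-term step yields \eqref{Optimal1}.

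The step I expect to demand the most care is the main-term bookkeeping: the $L$-polynomial has length only $N-1$, whereas $\deg\mathcal{P}(n)\asymp q^n$ can greatly exceed $N$, so one must check that truncating the inner $m$-sum and discarding the divisors $d$ of degree $\ge N$ cost no more than $O(q^{N/2+2n})$ — which is precisely what the crude estimates $\prod_{\deg P\le n}(1+|P|^{-1/2})\le 2^{\Pi_q(n)}$ and $\sum_{d\mid\mathcal{P}(n)}\deg d\ll 2^{\Pi_q(n)}q^n$ secure.
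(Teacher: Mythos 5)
Your proposal is correct and follows essentially the paper's own route: the detector \eqref{KeyOmega}, quadratic reciprocity \eqref{QuadRecip}, and the split into square versus non-square terms with the Weil bound \eqref{CharSumGRH} handling the latter. The only deviation is bookkeeping: you expand $L(1,\chi_Q)$ at its exact length $N-1$ (so the square-term $Q$-sum is exactly $\pi_q(N)$) and then complete the $d,m$-sums at a cost $\ll \pi_q(N)q^{-N/2}$ via $\prod_{\deg P\le n}\bigl(1+|P|^{-1/2}\bigr)\le 2^{\Pi_q(n)}$, whereas the paper truncates long at $A=2N\deg\mathcal{P}(n)$ so the main term is already complete and the correction $O(\omega(F))$ appears instead; both land within the stated error $O\left(N^2q^{N/2+2n}\right)$ (yours in fact slightly better), the only blemish being a dropped factor $2^{-\Pi_q(n)}$ in your intermediate display, which your subsequent use of the crude product bound makes clear was intended.
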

\begin{proof}

 First, it follows from \eqref{CharSum} that for all $m\geq N$ we have
$$
L(1,\chi_Q)= \sum_{\deg F\leq m}\frac{\chi_Q(F)}{|F|}.
$$
Let $A=2N \deg \mathcal{P}(n) \ll N q^{n}$ by \eqref{DegF}. Then, from \eqref{KeyOmega} we obtain
\begin{equation}\label{SumL1}
\begin{aligned} 
\sum_{Q\in \mathcal{S}_N(n, \{\delta_P\})}L(1,\chi_Q)=\frac{1}{2^{\Pi_q(n)}}\sum_{f \mid \mathcal{P}(n)} \delta_f \sum_{\substack{Q \text{ irreducible }\\ \deg Q=N}}\left(\frac{f}{Q}\right)\sum_{\deg F\leq A}\frac{\left(\frac{Q}{F}\right)}{|F|}\\
=\frac{1}{2^{\Pi_q(n)}}\sum_{f \mid \mathcal{P}(n)} \delta_f \sum_{\deg F\leq A}\frac{1}{|F|}\sum_{\substack{Q \text{ irreducible }\\ \deg Q=N}}\left(\frac{Ff}{Q}\right),
\end{aligned}
\end{equation}
by quadratic reciprocity \eqref{QuadRecip}.
Since any divisor $f$ of $\mathcal{P}(n)$ is square-free, it follows that $Ff$ is a square only when $F=f h^2$, for some monic polynomial $h$. In this case, we have
$$ \sum_{\substack{Q \text{ irreducible }\\ \deg Q=N}}\left(\frac{Ff}{Q}\right)= \pi_q(N)+ O(\omega(F))= \pi_q(N)+O(A),
$$
where $\omega(F)$ is the number of irreducible divisors of $F$, and $\omega(F)\leq \deg F\leq A$. 

 Furthermore, if $Ff$ is not a square, then by \eqref{CharSumGRH} we get
$$ \sum_{\substack{Q \text{ irreducible }\\ \deg Q=N}}  \left(\frac{Ff}{Q}\right) \ll \deg (Ff) q^{\frac{N}{2}}\ll A q^{N/2},$$
by \eqref{DegF}.
Inserting these estimates in \eqref{SumL1}, we deduce 
\begin{equation}\label{L1Average2}
\sum_{Q\in \mathcal{S}_N(n, \{\delta_P\})}L(1,\chi_Q)=\frac{\pi_q(N)}{2^{\Pi_q(n)}}\sum_{f \mid \mathcal{P}(n)} \frac{\delta_f}{|f|} \sum_{\deg h\leq (A-\deg f)/2}\frac{1}{|h|^2}+ O\left(A^2q^{N/2}\right),
\end{equation}
since $$\sum_{\deg F\leq A}\frac{1}{|F|}= \sum_{k=1}^A \sum_{\deg F=k}\frac{1}{q^k}=A.$$
Finally, since $\deg f\leq \deg \mathcal{P}(n)\leq A/2$, then for all $f \mid \mathcal{P}(n)$ we have the tail of the inner sum is very small:
$$ 
\sum_{\deg h> (A-\deg f)/2}\frac{1}{|h|^2}\leq \sum_{\deg h> A/2} \frac{1}{|h|^2}\leq \sum_{k>A/2} \frac{1}{q^{2k}}\ll q^{-N}.
$$
Inserting this estimate in \eqref{L1Average2} completes the proof.
\end{proof}
We finish this section by proving Theorem \ref{OmegaResults}.
\begin{proof}[Proof of Theorem \ref{OmegaResults}]
We choose $n$ such that 
\begin{equation}\label{ChoiceNn}
\frac{N\log N}{10\zeta_{\mathbb{A}}(2) q}\leq q^n< \frac{N\log N}{10\zeta_{\mathbb{A}}(2)}.
\end{equation}
%
 We choose $\delta_P=1$ for all monic irreducibles $P$ with $\deg P\leq n$. Then, it follows from Lemma \ref{SetPrimes} and Proposition \ref{Omega} that 
\begin{equation}\label{AverageL1PrimeBig}\frac{1}{|\mathcal{S}_N(n, \{\delta_P\})|}\sum_{Q\in \mathcal{S}_N(n, \{\delta_P\})}  L(1, \chi_Q)= \zeta_{\mathbb{A}}(2)\prod_{\deg P\leq n} \left(1+\frac{1}{|P|}\right) \big(1+O\left(q^{-N/6}\right)\big).
\end{equation}
Furthermore, by 
Lemma \ref{Mertens} we have
$$ \zeta_{\mathbb{A}}(2)\prod_{\deg P\leq n} \left(1+\frac{1}{|P|}\right)= \prod_{\deg P\leq n} \left(1-\frac{1}{|P|}\right)^{-1}\left(1+O\left(\frac{q^{-n}}{n}\right)\right)=e^{\gamma} n +O(1).$$
Combining this estimate with \eqref{ChoiceNn} and \eqref{AverageL1PrimeBig} yield the existence of a monic irreducible $Q$ of degree $N$, such that
$$ L(1, \chi_Q)\geq  e^{\gamma} \log(N\log N)+O(1)= e^{\gamma}\left(\log_2 |Q|+ \log_3 |Q|\right)+O(1), $$
as desired. Finally, one can deduce \eqref{Negative1} along the same lines by taking $\delta_P=-1$ for all monic irreducibles $P$ with $\deg P\leq n$.
\end{proof}

\end{document}